\def\@cite#1#2{{\m@th\upshape\bfseries%
[{#1\if@tempswa{\m@th\upshape\mdseries, #2}\fi}]}}
\newtheorem{cor}{Corollary}{\bf}{\it}
{\bf}{\it}
{\bf}{\it}
{\bf}{\it}
\newtheorem{mthm}{Main Theorem}{\bf}{\it}
{\bf}{\it}
\newtheorem{remark}{Remark}{\bf}{\it}
\newtheorem{ssthm}[subsection]{Theorem}{\bf}{\it}
\newtheorem{sslem}[subsection]{Lemma}{\bf}{\it}
\newtheorem{sscoro}[subsection]{Corollary}{\bf}{\it}
\newtheorem{ssprop}[subsection]{Proposition}{\bf}{\it}
\newtheorem*{thm}{Theorem}{\bf}{\it}
\newtheorem*{lem}{Lemma}{\bf}{\it}
\newtheorem*{sublem}{Sublemma}{\bf}{\it}
\newtheorem*{rem}{Remark}{\bf}{\it}
\newcommand{\N}{\mathcal N}
\newcommand{\Q}{\mathbb Q}
\newcommand{\R}{\mathbb R}
\newcommand{\C}{\mathbb C}
\newcommand{\p}{\mathbb P}
\newcommand{\ds}{\displaystyle}
\newcommand{\ra}{\longrightarrow}
\newcommand{\M}{\mathcal{M}}
\newcommand{\hyp}{\mathcal{H}^{hyp}}
\newcommand{\strata}{\mathcal{H}}
\newcommand{\Cyl}{\mathcal C}
\newcommand{\D}{\mathcal D}
\newcommand{\V}{\mathcal V}
\newcommand{\W}{\mathcal W}
\newcommand{\rel}{\mathrm{rel}}
\newcommand{\wt}{\widetilde}
\newcommand{\Mod}{\mathrm{Mod}}
\newcommand{\GL}{\mathrm{GL} }
\newcommand{\Pres}{\mathrm{Pres}_{(X, \omega)} }
\newcommand{\Twist}{\mathrm{Twist}_{(X, \omega)}}
\newcommand{\ackn}{\noindent{\sc Acknowledgement }\hspace{5pt} }
\newcommand{\cC}{\mathcal C}
\newcommand{\cV}{\mathcal V}
\newcommand{\cS}{\mathcal S}
\newcommand{\cW}{\mathcal W}
\newcommand{\T}{\mathcal T}
\begin{document}

\title[$\GL_2 \R$ Orbit Closures in Hyperelliptic Components]{$\GL_2 \R$ Orbit Closures in Hyperelliptic Components of Strata}

\author[Apisa]{Paul Apisa}
\address{Department of Mathematics, University of Chicago, Chicago, IL, 60615} \email{paul.apisa@gmail.com}

\maketitle 

\begin{abstract}
The object of this paper is to study $\GL_2 \R$ orbit closures in hyperelliptic components of strata of abelian differentials. The main result is that all higher rank affine invariant submanifolds in hyperelliptic components are branched covering constructions, i.e. every translation surface in the affine invariant submanifold covers a translation surface in a lower genus hyperelliptic component of a stratum of abelian differentials. This result implies finiteness of algebraically primitive Teichmuller curves in all hyperelliptic components for genus greater than two. A classification of all $\GL_2 \R$ orbit closures in hyperelliptic components of strata (up to computing connected components and up to finitely many nonarithmetic rank one orbit closures) is provided. Our main theorem resolves a pair of conjectures of Mirzakhani in the case of hyperelliptic components of moduli space.
\end{abstract}

\section{Introduction}

Let $\M_g$ be the moduli space of genus $g$ Riemann surfaces and let $\mathcal{H}_g$ be the sublocus of hyperelliptic Riemann surfaces. The inclusion of $\mathcal{H}_g$ into $\M_g$ is totally geodesic with respect to the Kobayashi metrics. Teichm\"uller geodesic flow and complex scalar multiplication generate a $\GL_2(\R)$ action on $\Omega \M_g$ - the moduli space of holomorphic one-forms on closed genus $g$ Riemann surfaces. Since $\mathcal{H}_g$ is totally geodesically embedded in $\M_g$, the collection $\Omega \mathcal{H}_g$ of holomorphic one-forms on genus $g$ hyperelliptic Riemann surfaces is invariant under the $\GL_2(\R)$ action.

Every genus $g$ hyperelliptic Riemann surface may be written as the normalization of the projective curve defined in affine coordinates by the equation $y^2 = f(x)$ where $f$ is a polynomial of degree $2g+1$ or $2g+2$ with simple roots. The hyperelliptic Riemann surface $X$ defined by the equation $y^2 = f(x)$ admits a holomorphic one-form $\omega = \frac{dx}{y}$. The zeros of this one-form occur precisely at the points of the curve which are at infinity. This set contains either a single point fixed by the hyperelliptic involution or two points exchanged by the involution. The question we undertake in the sequel is - ``what is the $\GL_2 \R$ orbit closure of this one-form?" Call this orbit closure $\M$. Every such orbit closure is a complex subvariety of $\Omega \M_g$ by work of McMullen~\cite{Mc5} in genus two and work of Eskin-Mirzakhani~\cite{EM}, Eskin-Mirzakhani-Mohammadi~\cite{EMM}, and Filip~\cite{Fi1} in general. Surprisingly, we find
\begin{mthm}\label{M1}
Let $\M$ be as in the preceding paragraph. Let $\pi: \Omega \M_g \ra \M_g$ be the forgetful map that sends a translation surface to its underlying Riemann surface. If $g > 2$, then there is a finite union $C_g \subseteq \Omega \M_g$ of subvarieties  of complex dimension at most three depending only on $g$, so that at least one of the following three possibilities occurs:
\begin{enumerate}
\item $\M$ is contained in $C_g$
\item $\pi(\M)$ coincides with the hyperelliptic locus
\item $\pi(\M)$ is a locus of branched covers contained in the hyperelliptic locus.
\end{enumerate}
\end{mthm}

Though it may not seem so, complex dimension three is not arbitrary. The reason it forms a natural threshold is rank. The tangent bundle of an orbit closure $\M$ at a point $(X, \omega)$ is naturally identified with a complex linear subspace of $H^1(X, Z(\omega); \C)$ where $Z(\omega)$ is the zero set of $\omega$, see~\cite{Wcyl} for details. If $p$ is the projection from relative to absolute cohomology, then Avila-Eskin-M\"oller~\cite{AEM} showed that $p\left( T_{(X, \omega)} \M \right)$ is a complex symplectic vector space. The rank of an affine invariant submanifold is defined to be half the complex dimension of the vector space $p\left( T_{(X, \omega)} \M \right)$. An orbit closure is said to be higher rank if its rank is larger than one. In the cases that we consider an orbit closure is low rank if and only if its complex dimension is three or less. 

The notion of rank allows us to recast the main question into the language of Teichm\"uller dynamics. The space $\Omega \M_g$ admits a $\GL_2 \R$-invariant stratification by prescribing the number and degree of vanishing of the zeros of the holomorphic one-forms. Given a partition $\kappa$ of $2g-2$, $\Omega \M_g(\kappa)$ denotes the stratum of holomorphic one-forms with $|\kappa|$ zeros whose degrees of vanishing form the set $\kappa$. The stratification on $\Omega \M_g$ induces a $\GL_2 \R$-invariant stratification on $\Omega \strata_g$. In this work, since we are interested in the orbit closures of abelian differentials of the form $\frac{dx}{y}$ we are interested in components of strata with either one zero or two zeros exchanged by the hyperelliptic involution. These are denoted by $\hyp(2g-2)$ and $\hyp(g-1,g-1)$ respectively. The main question then becomes ``what are the higher rank orbit closures in $\hyp(2g-2)$ and $\hyp(g-1,g-1)$?"

\begin{remark}
The notation change from $\Omega \strata_g$ to $\hyp$ is to distinguish specific components of $\Omega \strata_g$. While $\Omega \strata_g(2g-2)$ is connected and equal to $\hyp(2g-2)$, $\Omega \strata_g(g-1,g-1)$ is disconnected outside of genus two. For $g > 2$, $\Omega \strata_g(g-1,g-1)$ contains two components - $\hyp(g-1,g-1)$ - which contains abelian differentials with two zeros exchanged by the hyperelliptic involution and a second component that contains abelian differentials whose zeros are both Weierstrass points. In this work, we restrict our attention to $\hyp(2g-2) = \Omega \strata_g(2g-2)$ and to the component $\hyp(g-1,g-1)$ of $\Omega \strata_g(g-1,g-1)$.
\end{remark}

Posed in this way, the question already has a conjectural answer. Mirzakhani conjectured that higher rank orbit closures have field of definition $\Q$ and that they are branched covering constructions. These conjectures were first published in~\cite{Wcyl}. Recent work of McMullen, Mukamel, and Wright~\cite{MMW} provides a counterexample to this conjecture in a nonhyperelliptic component of a stratum in genus three. It remains an open question to determine in which components of which strata these conjectures hold. An orbit closure $\M$ is said to be branched covering construction if for every abelian differential $(X, \omega)$ in $\M$ - where $X$ is a Riemann surface and $\omega$ a holomorphic one-form - there is a quadratic differential $(Y, q)$ on a Riemann surface $Y$ and a holomorphic map $f: X \ra Y$ such that $\omega^2 = f^*q$. In the case of the hyperelliptic locus, which is automatically a stratum of branched covers of quadratic differentials on punctured spheres, the definition is modified to mean a locus of pullbacks of abelian differentials on lower genus Riemann surfaces.

%

The main result of this work, which implies Theorem~\ref{M1}, is that the Mirzakhani conjectures hold in hyperelliptic components of strata.

%
%
\begin{mthm}\label{T:M-1}
Let $\M$ be an orbit closure in $\hyp(2g-2)$ or $\hyp(g-1,g-1)$ of complex dimension at least four. If $\dim \M = 2r$ then $\M$ is a branched covering construction over $\hyp(2r-2)$ and if $\dim \M = 2r+1$ then $\M$ is a branched covering construction over $\hyp(r-1,r-1)$. The covers are branched over the zeros of the holomorphic one-forms and commute with the hyperelliptic involution.
\end{mthm}

The letter $r$ was chosen in the above theorem statement since it coincides with the rank of $\M$. 

\begin{remark}
It is important to note that this result is about hyperelliptic components of strata and not about hyperelliptic loci of abelian differentials. An open problem related to this work is to find a classification of the orbit closures in other strata of the hyperelliptic locus, i.e. to analyze the $\GL_2 \R$ dynamics of $\Omega \strata_g(\kappa)$ for $\kappa$ beyond $(2g-2)$ and $(g-1,g-1)$. 
\end{remark}

As a corollary of Theorem~\ref{T:M-1} we have finiteness of algebraically primitive Teichm\"uller curves in hyperelliptic components. In the case of $\hyp(g-1,g-1)$ this result was the main result of M\"oller~\cite{M3}.

\begin{mthm}
In $\hyp(2g-2)$ and $\hyp(g-1,g-1)$ there are finitely many algebraically primitive Teichm\"uller curves for $g>2$.
\end{mthm}
\begin{proof}
Suppose not to a contradiction. Let $C_i$ be an infinite sequence of distinct algebraically primitive Teichm\"uller curves. By Eskin-Mirzakhani~\cite{EM} a subsequence equidistributes in a finite union of connected affine invariant submanifolds $\M = \bigcup_i \M_i$. By Matheus-Wright~\cite{MW} algebraically primitive Teichm\"uller curves cannot equidistribute in the connected component of any stratum when $g>2$. Main Theorem~\ref{M1} implies that no $\M_i$ is higher rank since this would imply that $C_i$ is not geometrically primitive (and hence not algebraically primitive). Finally, no $\M_i$ is rank one since these orbit closures only contain finitely many nonarithmetic Teichm\"uller curves by Lanneau-Nguyen-Wright~\cite{LNW}. Therefore, we have a contradiction.
 \end{proof}

Work of Eskin, Filip, and Wright~\cite{EFW} establishes the following.

\begin{thm}[Eskin, Filip, Wright~\cite{EFW} Theorem 1.5]
Any infinite collection of nonarithmetic rank one $\GL_2 \R$ orbit closures admits a subsequence that equidistributes in a rank two affine invariant submanifold.
\end{thm}

It follows immediately from Main Theorem~\ref{M1} that

\begin{mthm}
For $g>2$, all but finitely many orbit closures in $\hyp(2g-2)$ and $\hyp(g-1,g-1)$ are branched covering constructions, and each of them has dimension at most three. 
\end{mthm}
\begin{proof}
Suppose to a contradiction that there is an infinite sequence $C_i$ of orbit closures in $\hyp(2g-2)$ or $\hyp(g-1,g-1)$ that are not branched covering constructions. By Main Theorem~\ref{M1} these orbit closures are rank one. Since arithmetic rank one orbit closures are torus covers, each $C_i$ is nonarithmetic rank one. Therefore, Eskin, Filip, and Wright~\cite{EFW} (Theorem 1.5) implies that the sequence equidistributes in a union of rank two affine invariant submanifolds. By Main Theorem~\ref{M1} these rank two orbit closures are branched covering constructions and therefore so are the $C_i$, which is a contradiction. 
\end{proof}

\noindent This theorem implies that outside of a subvariety of dimension at most three, we understand the closure of every complex geodesic in $\hyp(2g-2)$ and $\hyp(g-1,g-1)$. However, determining this subvariety remains open in all genera greater than two.  

The results in this paper represent the first such classification of orbit closures that holds in all genera. The argument is based on a degeneration argument that takes advantage of recent work of Mirzakhani and Wright~\cite{MirWri} on partial compactifications of affine invariant submanifolds. This is the first time that such an argument has been used to study orbit closures.

\subsection{Classification of Orbit Closures} 

We now offer a coarse classification of orbit closures in hyperelliptic components of strata. By coarse classification we mean a classification of orbit closures up to finitely many nonarithmetic closed orbits and up to classifying connected components of orbit closures.

\begin{mthm}[Classification of Orbit Closures in $\hyp(2g-2)$]
The affine invariant submanifolds in $\hyp(2g-2)$ are:
\begin{enumerate}
\item Countably many Teichm\"uller curves which arise from torus covers branched over one point.
\item (If $g \equiv 2 \text{ mod } 3$) Countably many Teichm\"uller curves whose trace fields are degree two and which equidistribute in an affine invariant submanifold of $\hyp(2g-2)$ arising from a branched cover construction over $\mathcal{H}(2)$.
\item Finitely many Teichm\"uller curves beyond the previous two families.
\item Finitely many rank $r>1$ affine invariant submanifolds for each $2r-1 \mid 2g-1$. These are branched covering constructions over $\hyp(2r-2)$.
\end{enumerate}
\end{mthm}

\begin{cor}[Characterization of Optimal Dynamics in $\hyp(2g-2)$]
Every orbit in $\hyp(2g-2)$ is either closed or equidistributed if and only if $2g-1$ is prime.
\end{cor}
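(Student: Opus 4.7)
The plan is to deduce the corollary as an essentially immediate consequence of the Classification of Orbit Closures in $\hyp(2g-2)$ stated above. First I would reformulate the dichotomy: by Eskin--Mirzakhani every $\GL_2\R$ orbit equidistributes in its closure, and this closure is an affine invariant submanifold, so ``every orbit is closed or equidistributed in $\hyp(2g-2)$'' says precisely that the only affine invariant submanifolds of $\hyp(2g-2)$ are closed orbits and $\hyp(2g-2)$ itself.

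Next I would read off from the Classification which proper affine invariant submanifolds could obstruct this dichotomy. Items (1)--(3) are all rank one, hence closed $\GL_2\R$ orbits, so they never obstruct. The only possible obstructions live in item (4), which produces, for each integer $r$ with $1 < r < g$ satisfying $(2r-1) \mid (2g-1)$, a proper rank $r$ affine invariant submanifold; the boundary case $r = g$ just recovers $\hyp(2g-2)$ itself and is discarded. Hence the desired dichotomy holds if and only if no such $r$ exists.

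The final step is an elementary number-theoretic translation. Since $2g-1$ is odd, every positive divisor has the form $2r-1$ for a unique positive integer $r$, and the constraint $1 < r < g$ picks out exactly the nontrivial divisors of $2g-1$ (those strictly between $1$ and $2g-1$). Such a divisor exists if and only if $2g-1$ is composite, which yields the claimed equivalence. There is no genuine obstacle beyond invoking the Classification Theorem: the one nuance worth flagging is the compatibility with item (2), where the hypothesis $g \equiv 2 \bmod 3$ forces $3 \mid 2g-1$ and, for $g \geq 5$, produces exactly the rank two branched covering construction into which those exceptional Teichm\"uller curves equidistribute. This is consistent with $2g-1$ being composite in that regime, as required.
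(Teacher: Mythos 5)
Your proof is correct and follows essentially the same route as the paper: the paper's one-line proof likewise reads off from the Classification Theorem that proper higher rank orbit closures in $\hyp(2g-2)$ exist exactly when $2r-1 \mid 2g-1$ for some $r>1$, i.e.\ exactly when $2g-1$ is composite. Your additional spelling out of the reformulation via Eskin--Mirzakhani--Mohammadi and the divisor count is just making explicit what the paper leaves implicit.
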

\begin{proof}
Higher rank proper orbit closures arise if and only if $2r-1$ divides $2g-1$ for some $r>1$.
 \end{proof}

In the case of $g=2$ this characterization of optimal dynamics follows from McMullen's classification of orbit closures in genus two, see~\cite{Mc}, ~\cite{Mc4}, and ~\cite{Mc5}. In the case of $g=3$ this theorem was the main theorem of Nguyen-Wright~\cite{NW}.

\begin{mthm}[Classification of Orbit Closures in $\hyp(g-1,g-1)$]
The affine invariant submanifolds in $\hyp(g-1,g-1)$ are:
\begin{enumerate}
\item Countably many Teichm\"uller curves which arise from torus covers branched over one point.
\item (If $g \equiv 0 \text{ mod } 3$) Countably many Teichm\"uller curves whose trace fields are degree two and which equidistribute in affine invariant submanifolds that are branched covering constructions over $\strata(2)$.
\item Finitely many Teichm\"uller curves beyond the previous two families.
\item Countably many orbit closures that are branched covering constructions over $\strata(0,0)$.
\item (If $g$ is even) Countably many orbit closures that cover genus two eigenform loci; these equidistribute in affine invariant submanifolds that are branched covering constructions over $\strata(1,1)$.
\item Finitely many three dimensional orbit closures beyond the previous two families. These are necessarily rank one and nonarithmetic. 
\item Finitely many rank $r>1$ affine invariant submanifolds for each $r \mid g$. These are branched covering constructions of $\hyp(r-1,r-1)$.
\item Finitely many rank $r>1$ affine invariant submanifolds for each $2r-1 \mid g$. These are branched covering constructions of $\hyp(2r-2)$.
\end{enumerate}
\end{mthm}

In the case of $g=2$ this theorem follows from McMullen's classification of orbit closures in genus two. In the case of $g=3$ this theorem was one of the main theorems of Aulicino-Nguyen~\cite{AN}.

\begin{cor}
There are no higher rank proper even dimensional orbit closures in $\hyp(g-1,g-1)$ if and only if $g = 2^n$ for some $n$. There are no higher rank proper odd dimensional orbit closures in $\hyp(g-1,g-1)$ if and only if $g$ is prime.
\end{cor}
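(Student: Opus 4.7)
The plan is to read this corollary off directly from the preceding Classification of Orbit Closures in $\hyp(g-1,g-1)$. The higher rank proper orbit closures are exactly those enumerated in items (7) and (8) of that theorem. The key observation is that these two items are distinguished by the parity of the complex dimension: since a branched covering construction has the same complex dimension as the base stratum it covers, item (7) (covers of $\hyp(r-1,r-1)$) produces orbit closures of odd dimension $2r+1$, while item (8) (covers of $\hyp(2r-2)$) produces orbit closures of even dimension $2r$. Thus the even- and odd-dimensional statements decouple and can be analyzed separately.

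For the even-dimensional claim, a proper higher rank even-dimensional orbit closure exists in $\hyp(g-1,g-1)$ if and only if item (8) is nonempty for some $r>1$, i.e., if and only if there is an integer $r \ge 2$ with $2r-1 \mid g$. As $r$ ranges over integers $\ge 2$, the quantity $2r-1$ ranges over the odd integers $\ge 3$, so such $r$ exists precisely when $g$ admits an odd divisor greater than $1$. The negation of this condition is precisely that $g$ is a power of $2$, giving the first half of the corollary.

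For the odd-dimensional claim, a proper higher rank odd-dimensional orbit closure exists if and only if item (7) produces a proper orbit closure, i.e., if and only if there exists $r$ with $1 < r < g$ and $r \mid g$ (one must exclude $r=g$, since the corresponding branched covering construction is then the identity cover, giving the ambient stratum itself, which is not proper). This is precisely the condition that $g$ is composite. Equivalently, there are no such orbit closures if and only if $g$ is prime.

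The only small check required is that the orbit closures in items (7) and (8) are genuinely proper in $\hyp(g-1,g-1)$ in the ranges indicated, which is automatic: item (8) covers a stratum with a single zero (necessarily distinct from $\hyp(g-1,g-1)$), while item (7) with $r<g$ yields complex dimension $2r+1 < 2g+1$. I do not foresee a substantive obstacle here; the content of the corollary is an elementary divisibility translation of the classification, and any difficulty lies entirely in the classification theorem itself rather than in this deduction.
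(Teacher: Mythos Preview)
Your proof is correct and follows exactly the approach the paper intends: the corollary is immediate from the classification theorem, with item (8) accounting for the even-dimensional higher rank proper orbit closures and item (7) (with $r<g$) accounting for the odd-dimensional ones, after which the divisibility conditions translate directly into ``$g$ is a power of $2$'' and ``$g$ is prime'' respectively. The paper gives no explicit proof of this corollary (treating it as obvious from the classification, just as the one-line proof of the analogous corollary for $\hyp(2g-2)$), and your write-up simply makes that reading explicit.
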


\subsection{Relation to Previous Results}

The origin of this work begins in the study of orbit closures in genus two. In the early 2000s, Calta~\cite{Ca} and McMullen~\cite{Mc} discovered an infinite family of closed complex geodesics in $\M_2$ that projected to curves $W_D$ on Hilbert modular surfaces $X_D$. In a subsequent series of papers~\cite{McM:spin}, ~\cite{Mc4} and~\cite{Mc5}, McMullen showed that the holomorphic one-forms on genus two Riemann surfaces whose $\GL_2(\R)$ are not dense are exactly the eigenforms of real multiplication. Bainbridge determined the Euler characteristic of each Weierstrass curve $W_D$ and the Lyapunov exponents of the Kontsevich-Zorich cocycle restricted to $W_D$ in~\cite{Ba}. Mukamel~\cite{Muk14} determined the orbifold points and homeomorphism type of each $W_D$. These results represent, up to classifying loci of torus covers, a classification of orbit closures in $\hyp(2)$ and $\hyp(1,1)$.

After McMullen's classification in genus two, attention turned to periodic points in genus two. M\"oller~\cite{M2} established that the only periodic points on Veech surfaces in $\Omega \M_2$ are Weierstrass points. The result was then extended to generic surfaces in all components of strata in Apisa~\cite{Apisa-mp1}. 

For more general results, see Eskin-Mirzakhani~\cite{EM}, Eskin-Mirzakhani-Mohammadi~\cite{EMM}, and Filip~\cite{Fi1}. We suspect that orbit closures in hyperelliptic loci could be productively studied by exploring the connection between the braid group and Hodge structure, see for instance McMullen~\cite{McM:braid}.

\subsection{Organization of the Paper and Remarks on the Proof}

In Section~\ref{S:Background} we provide background on results that will be fundamental for the proofs of our results. In Section~\ref{LindseyTrees} we will discuss a combinatorial model - the Lindsey half-tree - created by Kathryn Lindsey in~\cite{Lindsey} to study horizontally periodic translation surfaces in hyperelliptic components of strata. This model explains much of why orbit closures are particularly well-behaved in hyperelliptic components of strata. In Section~\ref{BCCs} we will define branched covering constructions rigorously and devise a criterion for when an affine invariant submanifold is a branched covering construction. In Section~\ref{CD} we will discuss Alex Wright's cylinder deformation theorem~\cite{Wcyl} and related constructions. In Section~\ref{Boundary1} we discuss Maryam Mirzakhani and Alex Wright's translation surface degeneration theorem~\cite{MirWri}. In Section~\ref{Boundary2} we specialize these results to the setting of hyperelliptic components of strata. These sections establish the tools needed to run the basic mechanism of the proof: find a horizontally periodic translation surface in an affine invariant submanifold $\M$, use the cylinder deformation theorem to degenerate it to the boundary of $\M$, use the results of Section~\ref{LindseyTrees} to show that the boundary translation surface is a disjoint union of translation surfaces in hyperelliptic components of strata, and then use induction and the degeneration theorem to study the original translation surface.

In Section~\ref{ODOC} we kick off the induction argument by establishing a host of nice properties satisfied by odd dimensional orbit closures in $\hyp(g-1,g-1)$. This is leveraged in Section~\ref{S:dim4} where we show that four-dimensional affine invariant submanifolds are branched covering constructions of $\mathcal{H}(2)$. This proof is representative of the more general proof, but without the technical difficulties. In Section~\ref{EDOC} we study the flat geometry of translation surfaces in higher rank even complex-dimensional affine invariant submanifolds; this result represents the technical core of the paper. In Section~\ref{BCCC} we use the results of the preceding section to implement the strategy developed in Section~\ref{S:dim4} to the more general setting. Finally, we prove the main theorem in Section~\ref{S:bcc-criterion}.

\bigskip

\ackn The author thanks Alex~Eskin, Alex~Wright, and David~Aulicino for their insightful and extensive comments. He thanks Kathryn~Lindsey, Martin~M\"oller, Anton~Zorich, and Elise~Goujard for helpful conversations. This material is based upon work supported by the National Science Foundation Graduate Research Fellowship Program under Grant No. DGE-1144082. The author gratefully acknowledges their support.

%
%

\section{Background}\label{S:Background}

The cotangent bundle of the moduli space $\M_g$ of smooth genus $g$ curves is naturally identified with the space of quadratic differentials over $\M_g$. Each quadratic differential on a Riemann surface $X$ associates a natural flat structure - i.e. a metric that is flat away from finitely many cone points - to $X$, see Zorich~\cite{Z}. This flat structure is called a half-translation surface structure since it endows $X$ with an atlas of charts to $\C$ with transition functions given by $z \ra \pm z + c$ for some $c \in \C$. The $\GL_2 \R$ action on $\C$ induces a $\GL_2 \R$ action on half-translation surfaces and hence a $\GL_2 \R$ action on the cotangent bundle of $\M_g$. 

Teichm\"uller's theorem states that if $X$ and $Y$ are distinct Riemann surfaces in $\M_g$ that are distance $d$ apart in the Teichm\"uller metric then the geodesic from $X$ to $Y$ is given by fixing a quadratic differential $q$ on $X$, associating the natural flat structure to $(X, q)$, and then applying the matrix $\begin{pmatrix} e^d & 0 \\ 0 & e^{-d} \end{pmatrix}$. The $g_t := \begin{pmatrix} e^t & 0 \\ 0 & e^{-t} \end{pmatrix}$ action on the space of quadratic differentials is called the Teichm\"uller geodesic flow. The $\GL_2 \R$ action on the bundle of quadratic differentials is the smallest group action generated by complex scalar multiplication and Teichm\"uller geodesic flow. 

The cotangent bundle of moduli space contains a subbundle $\Omega \M_g$ of quadratic differentials that are squares of abelian differentials. $\Omega \M_g$ is stratified by specifying the number of zeros and their degree of vanishing on the underlying one-form. Let $\strata$ be such a stratum and let $(X, \omega) \in \strata$. Let $S$ be a  basis of relative homology $H_1(X; Z(\omega))$ where $Z(\omega)$ is the zero set of $\omega$. Local coordinates around $(X, \omega)$ are given by the map $\Phi(Y, \eta) = \left( \int_s \eta \right)_{s \in S}$. These coordinates are called period coordinates. 

Kontsevich and Zorich classified the connected components of strata of abelian differentials in~\cite{KZ}.
\begin{thm}[Kontsevich-Zorich]
Each stratum has at most three components, which are distinguished by hyperellipticity and spin parity.
\end{thm} 

The only two strata that admit hyperelliptic components are $\Omega \M_g(2g-2)$ and $\Omega \M_g(g-1,g-1)$. These components will be denoted by $\hyp(2g-2)$ and $\hyp(g-1,g-1)$ respectively. To be completely explicit, $\hyp(2g-2)$ coincides with $\Omega \mathcal{H}_g(2g-2)$ and $\hyp(g-1,g-1)$ is the component of $\Omega \mathcal{H}_g(g-1,g-1)$ where the two zeros are exchanged by the hyperelliptic involution. The motivation of this work is to understand the dynamics of the $\GL_2 \R$ action on the two hyperelliptic components of strata by classifying $\GL_2 \R$ orbit closures. 

The two hyperelliptic components admit another pleasantly simple description. For every genus $g$ hyperelliptic Riemann surface $X$ there is a degree $2g+1$ or $2g+2$ polynomial with complex coefficients and simple roots so that $X$ is the normalization of the projectivization of the affine curve $\{ (x, y) \in \C^2 : y^2 = f(x) \}$. Given any such affine curve, there is an associated holomorphic one-form $\frac{dx}{y}$. The abelian differentials in $\hyp(2g-2)$ (resp. $\hyp(g-1,g-1)$) are all pairs of Riemann surfaces and one-forms constructed above where $\deg f = 2g+1$ (resp. $2g+2$). Another way of phrasing the goal of this paper is to take a simple polynomial, associate to it a hyperelliptic Riemann surface and holomorphic one-form, and to study the complex geodesic that this generates.

Work of Eskin, Mirzakhani, and Mohammadi implies that $\GL_2 \R$ orbit closures in strata of abelian differentials are orbifolds:

\begin{thm}[Eskin-Mirzakhani~\cite{EM}; Eskin-Mirzakhani-Mohammadi~\cite{EMM}]
$\GL_2 \R$ orbit closures in strata of abelian differentials are affine invariant submanifolds, i.e. $\GL_2 \R$-invariant orbifolds (possibly with self-intersections) that are locally cut out by real homogeneous linear equations in period coordinates.
\end{thm}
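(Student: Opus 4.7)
The plan is to emulate the strategy of Ratner's theorems for unipotent flows on homogeneous spaces: first classify the ergodic invariant probability measures for the upper triangular subgroup $P \subset \SL_2\R$, and then upgrade that classification to orbit closures via a quantitative equidistribution statement. Working with $P$ rather than with all of $\GL_2\R$ is convenient because $P$ is generated by the horocycle unipotent $U$ and the Teichm\"uller geodesic flow $g_t$, so the dynamics is partially hyperbolic and amenable to measure rigidity, while $P$-orbit closures still coincide with $\GL_2\R$-orbit closures.

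For the measure classification step, I would implement the Benoist--Quint-style exponential drift argument of Eskin--Mirzakhani \cite{EM}. Fix an ergodic $P$-invariant probability measure $\mu$ and pick two $\mu$-generic points $x_1, x_2$ lying in a common unstable leaf of $g_t$ and close together in period coordinates. Flow by $g_t$ until their separation reaches unit size, then shift one trajectory by a small element of $U$ to realign it with the other. Passing to a weak limit and exploiting $P$-invariance, one extracts a new translation in period coordinates that preserves $\mu$ and is tangent to $\mathrm{supp}(\mu)$. Iterating, one builds up enough translational invariance to force the conditional measures of $\mu$ to be Lebesgue on affine subspaces, which is precisely the statement that $\mu$ is affine. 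The main obstacle, and the technical core of the proof, is implementing this drift on a noncompact stratum: one needs the quantitative $\SL_2\R$-recurrence estimates of Eskin--Masur and their refinements in \cite{EMM} to ensure that the two trajectories simultaneously return to a common compact set often enough, and careful bookkeeping to guarantee that the produced drift direction lies in the tangent space of $\mathrm{supp}(\mu)$ rather than escaping into the ambient stratum or to the boundary.

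The passage from measures to orbit closures then follows the scheme of \cite{EMM}: for every $(X,\omega)$, one shows that the $P$-averages over expanding balls equidistribute to a unique $P$-invariant probability measure supported on $\overline{P \cdot (X,\omega)}$. By the previous step this measure is affine, so its support is an affine invariant submanifold, and the theorem follows. The delicate ingredient in this upgrade --- the reason a separate paper is needed beyond the measure classification --- is uniform equidistribution: one must rule out that as $(X,\omega)$ is perturbed, the limiting measures concentrate on proper affine invariant subvarieties, which is handled via an avoidance principle together with quantitative non-divergence estimates and entropy-type bounds tailored to strata of abelian differentials.
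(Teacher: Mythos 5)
The paper does not prove this theorem at all: it is quoted verbatim as background, with the proof deferred entirely to Eskin--Mirzakhani \cite{EM} and Eskin--Mirzakhani--Mohammadi \cite{EMM}, and nothing in the paper's later arguments reproves or even sketches it. Your proposal is a fair high-level description of how those two papers actually proceed --- classification of ergodic $P$-invariant measures via a Benoist--Quint-style exponential drift argument in period coordinates, supported by quantitative recurrence, followed by the upgrade from measures to orbit closures via equidistribution of expanding $P$-averages and an avoidance principle ruling out concentration on proper affine invariant submanifolds. But be clear about its status: as written it is a roadmap of the cited works, not a proof. Every genuinely hard step --- constructing the drift so that the produced direction lies in the tangent space of $\mathrm{supp}(\mu)$, the conditional-measure and entropy arguments forcing the measure to be affine, the non-divergence estimates, and the uniform avoidance needed so that limits of the expanding averages do not escape to smaller submanifolds or to the boundary --- is precisely the content of \cite{EM} and \cite{EMM}, and your sketch invokes those results rather than establishing them. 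Within the context of this paper that is exactly what the author does too (cite and use), so there is no conflict with the paper; just do not present the outline as a self-contained argument.
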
 
\noindent For a survey of this theorem and its applications to the study and classification of affine invariant submanifolds see Wright~\cite{Wsurvey}. 

The tangent bundle of an affine invariant submanifold $\M$ at a point $(X, \omega)$ is naturally identified with a complex linear subspace of $H^1(X, Z(\omega); \C)$, see~\cite{Wcyl} for details. Let $p: H^1(X, Z(\omega); \C) \ra H^1(X, \C)$ be the projection from relative to absolute cohomology.

\begin{thm}[Avila-Eskin-M\"oller~\cite{AEM}]
If $\M$ is an affine invariant submanifold, then $p\left( T_{(X, \omega)} \M \right)$ is a complex symplectic vector space. 
\end{thm}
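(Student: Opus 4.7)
The plan is to reduce the claim to the statement that the cup product on $H^1(X, \C)$, viewed as an alternating $\C$-bilinear form, restricts non-degenerately to the complex subspace $V := p(T_{(X,\omega)}\M)$. Setting $K := V \cap V^{\perp}$ for the symplectic orthogonal inside $H^1(X, \C)$, it suffices to prove $K = 0$. First I would exploit two structural features of $V$ that come essentially for free. (a) $V$ is conjugation-invariant: since $\M$ is cut out by real linear equations in period coordinates, $T_{(X,\omega)}\M \subset H^1(X, Z(\omega); \C)$ has a real structure, and this descends under $p$ to a real structure on $V$. (b) As $(X,\omega)$ varies in $\M$, $V$ defines a flat subbundle of the Hodge bundle along $\GL_2\R$-orbits, so the Kontsevich-Zorich cocycle preserves it; hence the radical $K$ is itself a conjugation-invariant, flat subbundle.

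The natural tool to detect non-degeneracy is the Hodge inner product, positive-definite on $H^{1,0}$ and negative-definite on $H^{0,1}$, which is related to the cup product through the complex structure $J$ coming from the Hodge star. The linchpin of the proof is to show that $V$ is compatible with the Hodge decomposition, i.e.\ that $V \otimes \C$ decomposes as $(V \cap H^{1,0}) \oplus (V \cap H^{0,1})$, or equivalently that $V$ is $J$-invariant. Once this is known, the Hodge-Riemann bilinear relations force the cup product to be non-degenerate on $V$: any nonzero $v \in K$ would, via its Hodge components, yield a relation among positive norms that cannot vanish, so $K = 0$.

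The main obstacle, and where the real work lies, is establishing the Hodge-compatibility of $V$; the $\GL_2\R$ action does not commute with $J$, so this is not automatic. My plan would be to appeal to Deligne's semisimplicity theorem for local systems underlying a polarized variation of Hodge structure: the ambient cohomology bundle over $\M$ is a polarized VHS with polarization given precisely by the cup product, and $V$ is a flat, conjugation-invariant subbundle. Deligne's theorem then produces a flat complement and shows that $V$ itself underlies a sub-VHS, whose polarization -- the restriction of the cup product -- is automatically non-degenerate. A more dynamical alternative would instead combine Forni's semisimplicity of the Kontsevich-Zorich cocycle with the fact that the tautological plane spanned by $[\operatorname{Re}\omega]$ and $[\operatorname{Im}\omega]$ lies in $V$ and realizes the top Lyapunov exponent: a nontrivial radical $K$ would be a flat isotropic subbundle on which the Hodge norm is controlled by a uniformly bounded conformal factor, contradicting the strict positivity of the polarization on the ambient Hodge bundle.
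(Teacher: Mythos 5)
This statement is not proved in the paper at all: it is quoted verbatim from Avila--Eskin--M\"oller \cite{AEM}, so there is no internal argument to compare yours with; your proposal has to be judged as an attempted proof of the cited theorem. The reduction you make is fine: $V=p(T_{(X,\omega)}\M)$ is defined over $\R$ and flat over $\M$, and if one knew $V=(V\cap H^{1,0})\oplus(V\cap H^{0,1})$ then on a curve the $(0,2)$ part of the cup product vanishes, so the second Hodge--Riemann relation does kill the radical exactly as you say. The genuine gap is the linchpin step. Deligne's semisimplicity theorem and the theorem of the fixed part are \emph{global} statements about polarizable variations of Hodge structure over a smooth \emph{quasi-projective} base; flatness plus conjugation-invariance over an arbitrary base gives nothing (over a small disc every constant subbundle is flat and real, yet almost none are sub-VHS, since the Hodge filtration genuinely moves). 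At this point in the theory an affine invariant submanifold is only known to be an immersed manifold cut out by real linear equations, not a quasi-projective variety, so Deligne cannot be cited off the shelf; the algebraicity of $\M$, and the statement that flat invariant subbundles over $\M$ are Hodge-compatible, are themselves later theorems of Filip whose proofs must \emph{replace} Deligne's argument by dynamical input (the finite $\SL_2\R$-invariant measure, recurrence, and Forni's curvature/positivity formulas). So the step you label ``appeal to Deligne'' is precisely where the real content of the theorem lives, and as written it is unsupported (and arguably circular).

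Your dynamical fallback does not close this gap either. ``Forni's semisimplicity of the Kontsevich--Zorich cocycle'' is not an available theorem (Forni proves positivity/spectral gap statements; semisimplicity is Filip), and the claimed contradiction with ``strict positivity of the polarization'' is not a contradiction: nondegeneracy of the ambient symplectic form in no way forbids a flat invariant \emph{isotropic} subbundle (every Lagrangian is isotropic). The actual mechanism in \cite{AEM} is a dichotomy: a measurable flat $\SL_2\R$-invariant subbundle of the Hodge bundle is either symplectic or isometric for the Hodge norm; the radical $K\subseteq V$ is isotropic and invariant, hence would be isometric, and one must then rule out isometric directions inside $p(T\M)$ by using that such classes correspond to genuine deformations inside $\M$ whose geodesic-flow orbits would stay uniformly close, contradicting recurrence/hyperbolicity. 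That last step, which is the heart of the proof, is absent from your sketch.
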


\begin{rem}
In fact more is true, by Filip~\cite{Fi2} (Theorem 1.1) $p\left( T_{(X, \omega)} \M \right)$ respects the Hodge structure on $H^1(X, \C)$
\end{rem}

Define the rank of $\M$ to be $\mathrm{rk}(\M) := \frac{1}{2} \dim_\C p \left( T_{(X, \omega)} \M \right)$ and define $\mathrm{rel}(\M) := \dim_\C \M - 2 \cdot \mathrm{rk}(\M)$. An affine invariant submanifold is said to be higher rank if its rank is larger than one.

\subsection{Self-intersecting orbit closures and tangent spaces}

It is important to discuss an issue arising from the fact that affine invariant submanifolds may contain self-intersections. If $\M$ is an affine invariant submanifold, then the self-intersection locus is a proper affine invariant submanifold. Throughout the sequel, we will wish to refer to the tangent space of an affine invariant submanifold, but this notion may break down at the self-intersection locus or at orbifold points. To avoid orbifold issues we will assume throughout that a tacit level three structure has been fixed. The self-intersection issue is a little trickier.

One approach to this issue would be to only work with translation surfaces that are generic in $\M$ with respect to the $\GL(2, \R)$ action. However, this can become cumbersome, and so we elect to follow the approach outlined in Section 2.1 of Lanneau-Nguyen-Wright~\cite{LNW}. Let $\M$ be an affine invariant submanifold and let $\M'$ be the set of translation surfaces $(X, \omega)$ together with a maximal subspace $V \subseteq H^1(X, \Sigma; \C)$ that is tangent to $\M$ and where $\Sigma$ is the collection of zeros of $\omega$. Notice that $\M'$ has a $\GL_2(\R)$ action that acts in the usual way on $(X, \omega)$ and by parallel transport using the Gauss-Manin connection on $V$. Let $f: \M' \ra \M$ be the map that forgets the subspace $V$.  This map is generically one-to-one and $\GL_2(\R)$-equivariant. 

Given an element $v$  sufficiently close to $0$ in relative cohomology $H^1(X, \Sigma; \C)$ let $(X, \omega) + v$ be the unique element of the stratum near $(X, \omega)$ where $\omega$ is replaced by $\omega + v$. The condition that $v$ is sufficiently close to $0$ is necessary to prevent the translation surface from degenerating. A neighborhood basis of $\M'$ around $(X, \omega; V)$ will be the following. For any open set $U \subseteq V$ containing $0$ and where $\{ (X, \omega) + u \}_{u \in U}$ is a contractible set contained in $\M$ set the corresponding neighborhood of $(X, \omega; V)$ to be $\{ (X, \omega + u; V) : u \in U \}$. With this topology, $f$ is continuous and $\M'$ has a linear structure that makes $f$ locally linear. Finally, the tangent space at a point $(X, \omega; V)$ of $\M'$ is canonically identified with $V$ and so $\M'$ is smooth. 

Throughout the text we will tacitly work on $\M'$, but write $\M$. This allows us to refer to a well-defined tangent space of an affine invariant submanifold (despite the fact that $\M$ may actually have self-intersection in a stratum). For a discussion of how to avoid issues of self-intersection when using the Mirzakhani-Wright partial compactification see the statement of Theorem 2.9 in Mirzakhani-Wright~\cite{MirWri} and the surrounding discussion.

\section{The Lindsey Half-Tree of Horizontally Periodic Translation Surfaces in Hyperelliptic Components of Strata}\label{LindseyTrees} 

In this section we will review a construction of Lindsey that associates a half-tree to a horizontally periodic translation surface in a hyperelliptic component of a stratum of abelian differentials. We will show that any surface constructed in this way is guaranteed to be in $\hyp(2g-2)$ or $\hyp(g-1,g-1)$ for some $g \geq 1$. In particular, translation surfaces constructed in this way will have marked points if and only if the genus is one. 

We begin by making two combinatorial definitions. A half-graph $\Gamma$ consists of a set of vertices, a set of edges (each of which connects two vertices), and a set of half-edges that begin at a vertex but do not end at a vertex. A half-tree is a half-graph whose vertex and edge set form a tree. 

Let $(X, \omega)$ be a horizontally periodic translation surface in a hyperelliptic component of a stratum of abelian differentials. Lindsey in~\cite{Lindsey} showed the following results:

\begin{thm}[Lindsey~\cite{Lindsey} Lemma 2.2]
 Every horizontal cylinder on $(X, \omega)$ is fixed by the hyperelliptic involution. Therefore, if $C$ and $D$ are two horizontal cylinders and $C$ shares a saddle connection with $D$ on its top boundary, then it shares a saddle connection with $D$ on its bottom boundary as well.
 \end{thm}
 
 \begin{thm}[Lindsey~\cite{Lindsey} Lemma 2.4]
To each cylinder $C$ one may associate a translation surface in the following way. Consider $C$ as a cylinder in $(X, \omega)$ with boundary. If the hyperelliptic involution exchanges two saddle connections on the top and bottom boundary of $C$, then glue them together by translation. The resulting translation surface is hyperelliptic. 
\end{thm}

\noindent These two observations lead to the following construction of a half-tree associated to $(X, \omega)$. To each horizontal cylinder associate a vertex. Connect two distinct vertices by an edge if the corresponding cylinders share a saddle connection on their boundary. By Lindsey~\cite{Lindsey} Lemma 2.4, the resulting graph is a tree. To make the graph into a half-tree and not just a tree to each cylinder that has a saddle connection joining its top and bottom boundary, add a half-edge to the corresponding vertex of $\Gamma$. The total number of half-edges is $2g+|\Sigma| - 2$ where $\Sigma$ is the singular set and where we count each full edge as two half-edges. We will call $\Gamma$ the Lindsey tree associated to $(X, \omega)$.

Lindsey's result are actually more general than the results we have stated here. It associates a tree to any translation surface in a hyperelliptic component; in particular the translation surface need not be horizontally periodic. In this more complicated construction, each node represents either a horizontal cylinder or a minimal component of the horizontal line flow and a new kind of half-edge is required corresponding to horizontal lines beginning at a singularity, but never terminating at a singularity.

In Figures~\ref{fig:LT1} and~\ref{fig:LT2} below are all possible half-trees with four or fewer half-edges - i.e. the ones arising from surfaces in $\strata(2)$ and $\strata(1,1)$ - and corresponding horizontally periodic surfaces.

\begin{figure}[h]
\centering
    \begin{tikzpicture}
		\draw (0,0) -- (0,1) -- (3,1) -- (3,0) -- (0,0);
		\draw[dotted] (1,1) -- (1,0);
		\draw[dotted] (2,1) -- (2,0);
		\node at (.5,1.4) {$1$}; \node at (.5,-0.4) {$3$};
		\node at (1.5,1.4) {$2$}; \node at (1.5,-0.4) {$2$};
		\node at (2.5,1.4) {$3$}; \node at (2.5,-0.4) {$1$};
		
		\draw (0, -2) -- (1,-2) -- (2, -1) -- (1, -2) -- (2, -3);
		\draw[black, fill] (1,-2) circle[radius=4.5pt];
	\end{tikzpicture}
	\hspace{5mm}
        \begin{tikzpicture}
		\draw (0,0) -- (0,2) -- (1,2) -- (1,1) -- (2,1) -- (2,0) -- (0,0);
		\draw[dotted] (0,1) -- (1,1) -- (1,0);
		\node at (.5, 2.4) {$1$}; \node at (.5,-0.4) {$2$};
		\node at (1.5,1.4) {$2$}; \node at (1.5,-0.4) {$1$};
		
		\draw (0,-2) -- (1,-2) -- (2,-2);
		\draw[black, fill] (0,-2) circle[radius=4.5pt];
		\draw[black, fill] (1,-2) circle[radius=4.5pt];
	\end{tikzpicture}
	\caption{Lindsey trees of horizontally periodic translation surfaces in $\strata(2)$}
	\label{fig:LT1}
\end{figure}
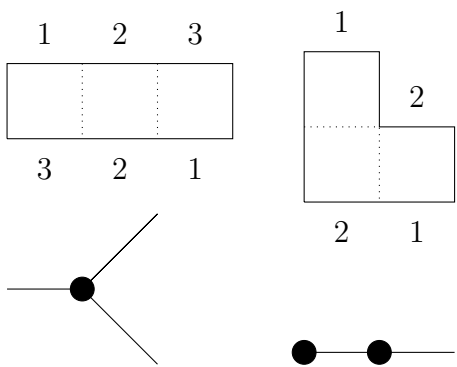

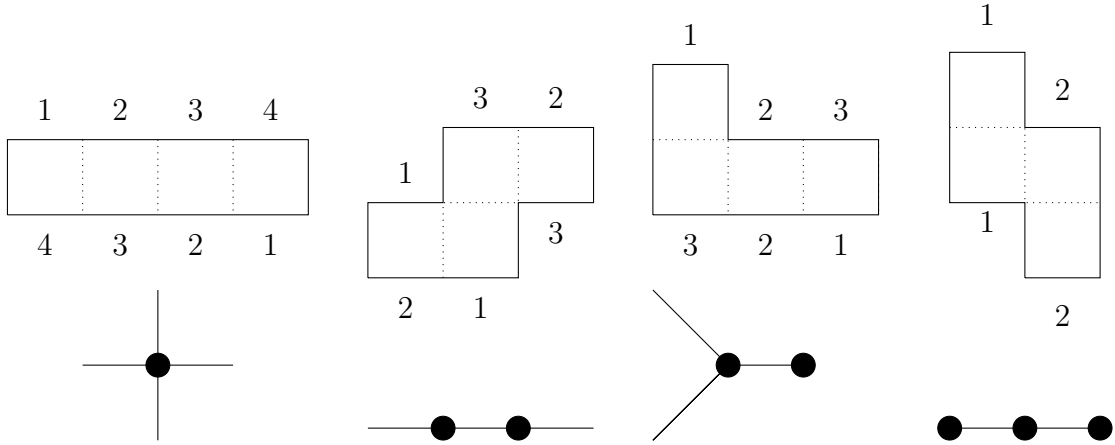
\begin{figure}
	\begin{tikzpicture}
        
		\draw (0,0) -- (0,1) -- (4,1) -- (4,0) -- (0,0);
		\draw[dotted] (1,1) -- (1,0);
		\draw[dotted] (2,1) -- (2,0);
		\draw[dotted] (3,1) -- (3,0);
		\node at (.5,1.4) {$1$}; \node at (.5,-0.4) {$4$};
		\node at (1.5,1.4) {$2$}; \node at (1.5,-0.4) {$3$};
		\node at (2.5,1.4) {$3$}; \node at (2.5,-0.4) {$2$};
		\node at (3.5,1.4) {$4$}; \node at (3.5,-0.4) {$1$};

		\draw (2, -1) -- (2,-3);
		\draw (1,-2) -- (3, -2);  
		\draw[black, fill] (2,-2) circle[radius=4.5pt];
	\end{tikzpicture}
	\hspace{5mm}
	\begin{tikzpicture}
        
		\draw (0,0) -- (0,1) -- (1,1) -- (1,2) -- (3,2) -- (3, 1) -- (2,1) -- (2,0) -- (0,0); 
		\draw[dotted] (1,0) -- (1,1) -- (2,1) -- (2,2);
		\node at (.5, 1.4) {$1$}; \node at (.5,-0.4) {$2$};
		\node at (1.5,2.4) {$3$}; \node at (1.5,-0.4) {$1$};
		\node at (2.5,2.4) {$2$}; \node at (2.5,.6) {$3$};
		
		\draw (0,-2) -- (1, -2) -- (2,-2) -- (3,-2);
		\draw[black, fill] (1,-2) circle[radius=4.5pt];
		\draw[black, fill] (2,-2) circle[radius=4.5pt];

	\end{tikzpicture}
	\hspace{5mm}
	\begin{tikzpicture}
        
		\draw (0,0) -- (0,2) -- (1,2) -- (1,1) -- (3,1) -- (3, 0) -- (0,0); 
		\draw[dotted] (0,1) -- (1,1) -- (1,0);
		\draw[dotted] (2,1) -- (2,0);
		\draw[dotted] (3,1) -- (3,0);
		\node at (.5, 2.4) {$1$}; \node at (.5,-0.4) {$3$};
		\node at (1.5,1.4) {$2$}; \node at (1.5,-0.4) {$2$};
		\node at (2.5,1.4) {$3$}; \node at (2.5,-0.4) {$1$};

		\draw (0,-1) -- (1, -2) -- (0,-3) -- (1,-2) -- (2,-2);
		\draw[black, fill] (1,-2) circle[radius=4.5pt];
		\draw[black, fill] (2,-2) circle[radius=4.5pt];

	\end{tikzpicture}
	\hspace{5mm}
	\begin{tikzpicture}
        
        		\draw (1,0) -- (1,1) -- (0,1) -- (0,3) -- (1,3) -- (1,2) -- (2,2) -- (2,0) -- (1,0);
		\draw[dotted] (0,2) -- (1,2) -- (1,1) -- (2,1);
		\node at (.5, 3.5) {$1$}; \node at (1.5, 2.5) {$2$};
		\node at (.5, .75) {$1$}; \node at (1.5, -.5) {$2$};

		\draw (0,-2) -- (1, -2) -- (2,-2);
		\draw[black, fill] (0,-2) circle[radius=4.5pt];
		\draw[black, fill] (1,-2) circle[radius=4.5pt];
		\draw[black, fill] (2,-2) circle[radius=4.5pt];
	\end{tikzpicture}
        
\caption{Lindsey trees of horizontally periodic translation surfaces in $\strata(1,1)$}
\label{fig:LT2}
\end{figure}

Define the combinatorial type of a horizontally periodic translation surface in $\mathcal{H}^{hyp}(2g-2)$ or $\mathcal{H}^{hyp}(g-1,g-1)$ to be the equivalence class of horizontally periodic translation surfaces that are related by some combination of the following three operations:
\begin{enumerate}
\item Horizontally shearing a horizontal cylinder.
\item Vertically dilating a horizontal cylinder.
\item Changing the lengths of a collection of horizontal saddle connections that is invariant under the hyperelliptic involution.
\end{enumerate}
The notion of combinatorial type applies equally well to horizontal cylinders on translation surfaces.

\begin{sslem}\label{easy}
Each node in the Lindsey half-tree corresponds to a horizontal cylinder of the same combinatorial type as the cylinder in Figure~\ref{fig:CT}.
\begin{figure}[H]
\centering
\begin{tikzpicture}[scale=.75]
	\draw (0,0) -- (4,0) -- (4,1) -- (0,1) -- (0,0);
	\draw[dotted] (1,0) -- (1,1);
 	\draw[dotted] (2,0) -- (2,1);
	\draw[dotted] (3,0) -- (3,1);
	\node at (.5, -.5) {$1$}; \node at (.5,1.5) {$n$};
	\node at (1.5, -.5) {$2$}; \node at (1.5,1.5) {$n-1$};
	\node at (2.5, -.5) {$\hdots$}; \node at (2.5, 1.5) {$\hdots$};
	\node at (3.5, -.5) {$n$}; \node at (3.5, 1.5) {$1$};
\end{tikzpicture}
\caption{The combinatorial type of a horizontal cylinder in $\hyp(2g-2)$ or $\hyp(g-1,g-1)$}
\label{fig:CT}
\end{figure}
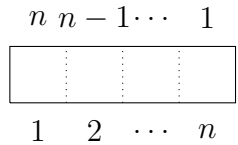
By identifying horizontal saddle connections that have the same label, we may choose to view this cylinder as a translation surface in a hyperelliptic component.
\end{sslem}
\begin{proof}
By Lindsey (\cite{Lindsey} Lemma 2.4) each node corresponds to a hyperelliptic surface that is contained in one horizontal cylinder and whose hyperelliptic involution is given by rotation by $\pi$. It follows that each node has the combinatorial type shown above.
 \end{proof}

So that we may refer to it later the combinatorial type of the cylinder in Lemma~\ref{easy} will be referred to as hyperelliptic combinatorial type.

\begin{ssthm}\label{LT}
The combinatorial types of horizontally periodic translation surfaces in $\mathcal{H}^{hyp}(2g-2)$ and $\mathcal{H}^{hyp}(g-1,g-1)$ are in bijective correspondence with planar embeddings of half-trees with $2g+|\Sigma| - 2$ half-edges up to precomposition with half-tree automorphisms (here we count full edges as two half-edges).
\end{ssthm}
\begin{proof}
For the forward direction of this correspondence, take the combinatorial type of a horizontally periodic translation surface in  $\mathcal{H}^{hyp}(2g-2)$ or $\mathcal{H}^{hyp}(g-1,g-1)$ and associate to it the Lindsey half-tree. We form the planar embedding of the Lindsey half-tree (up to automorphism) by cyclically ordering the half-edges attached to a node according to the cyclic ordering of saddle connections on the upper boundary of the horizontal cylinder corresponding to the node.

For the reverse direction we must take a half-tree $\Gamma$ and produce a translation surface $(X_\Gamma, \omega_\Gamma)$ in a hyperelliptic component of a stratum whose associated Lindsey half-tree is $\Gamma$. Associate to each node $v \in \Gamma$ the translation surface in Figure~\ref{fig:CT} taking $n$ to be the number of edges and half-edges attached to $v$. Label the edges and half-edges attached to $v$ clockwise $\{1,\hdots, n\}$. If an edge of the Lindsey tree connects the nodes $v$ and $w$ it will have two labels $i$ and $j$ coming from $v$ and $w$ respectively. To form $(X_\Gamma, \omega_\Gamma)$ slice open $v$ along saddle connection $i$ and $w$ along saddle connection $j$ and glue $i$ to $j$. Do this for all edges in $\Gamma$. 

The resulting surface has an involution given by $-I$ that fixes every horizontal cylinder. Since the surface in Figure~\ref{fig:CT} is hyperelliptic with hyperelliptic involution given by $-I$ the quotient of every node of the Lindsey tree is a copy of $\p^1$. The tree structure (ignoring half-edges) of the half-tree describes how the copies of $\p^1$ glue together. Since trees are contractible it follows that $(X_\Gamma, \omega_\Gamma)/-I$ is homeomorphic to $\p^1$. Consequently, $(X_\Gamma, \omega_\Gamma)$ is a hyperelliptic translation surface, i.e. $X_\Gamma$ admits a hyperelliptic involution that takes $\omega_\Gamma$ to $-\omega_\Gamma$.  To conclude that $(X_\Gamma, \omega_\Gamma)$ actually lies in  $\mathcal{H}^{hyp}(2g-2)$ or $\mathcal{H}^{hyp}(g-1,g-1)$ and not in a hyperelliptic locus in another connected component it suffices to show that the surface has either one zero or two zeros that are exchanged by the hyperelliptic involution.

The translation surfaces in Figure~\ref{fig:CT} has a single Weierstrass point at the center of the rectangle and another $n+1$ Weierstrass points at the midpoints of each saddle connection on the boundary (whether or not the vertex is also a Weierstrass point will depend on the parity of $n$).  Therefore, prior to identifying saddle connections, each node $v$ of $\Gamma$ contributes $d_v + 2$ Weierstrass points (not including potential Weierstrass points at vertices) where $d_v$ is the number of full and half-edges attached to $v$. When two saddle connections on nodes $v$ and $w$ are identified the midpoints of both saddle connections are exchanged by the hyperelliptic involution and hence cease to be fixed points. It follows that the number of Weierstrass points on $(X_\Gamma, \omega_\Gamma)$ is
\[ 2g+2 = \left( \sum_{v \in V} d_v + 2 \right) - 2|E| + \# \{ \text{zeros fixed by the hyperelliptic involution} \} \]
where $V$ (resp. $E$) are the vertices (resp. full edges) of $\Gamma$.  Since $\Gamma$ is a tree $2|V| - 2|E| = 2$ and since the total number of half-edges (counting each full edge as two half edges) is $2g+|\Sigma|-2$ we also have that $\sum_{v \in V} d_v = 2g+|\Sigma|-2$. Therefore,
\[ 2g+2 = \left( 2g + |\Sigma| - 2 +2|V| \right) -2|E| + \# \{ \text{zeros fixed by the hyperelliptic involution} \} \]
which shows that
\[ |\Sigma| + \# \{ \text{zeros fixed by the hyperelliptic involution} \} = 2 \]
It follows that $(X_\Gamma, \omega_\Gamma)$ has either one fixed zero or two zeros that are exchanged by the hyperelliptic involution; so $(X_\Gamma, \omega_\Gamma)$ lies in $\mathcal{H}^{hyp}(2g-2)$ or $\mathcal{H}^{hyp}(g-1,g-1)$.
 \end{proof}
 
\begin{sscoro}\label{LTC}
Suppose that $(X, \omega)$ is a horizontally periodic translation surface satisfying the following two conditions:
\begin{enumerate} 
\item Every horizontal cylinder has hyperelliptic combinatorial type 
\item The cylinder diagram of $(X, \omega)$ in the horizontal direction is a Lindsey tree 
\end{enumerate}
then $(X, \omega)$ lies in a hyperelliptic component of a stratum of abelian differentials.
\end{sscoro}
\begin{proof}
Suppose that $(X, \omega)$ was constructed as described. Let $\Gamma$ be the cylinder diagram. Shearing individual cylinders preserves the stratum in which $(X, \omega)$ is contained. Moreover, given a horizontal cylinder and a saddle connection $s$ on its top boundary there is a saddle connection $s'$ on its lower boundary that has identical length by construction. Changing the length of $s$ and $s'$ so that they remain of identical length preserves the stratum in which $(X, \omega)$ is contained. These two operations can be successively used to move $(X, \omega)$ to the surface $(X_\Gamma, \omega_\Gamma)$, which we proved belonged to a hyperelliptic component in Theorem~\ref{LT}.
\end{proof}

%
%

\section{Branched Covering Constructions}\label{BCCs}
A translation covering $f: (Y, \eta) \ra (X, \omega)$ is a holomorphic map $f: Y \ra X$ such that $f^*\omega = \eta$. A simple translation covering is a translation covering that is branched over the zeros of $\omega$ and for which $Y$ is connected. The goal of this section is to develop a criterion to recognize ``branched covering constructions over $\M$" - i.e. affine invariant submanifolds all of whose elements are simple translation coverings of elements in a component $\M$ of a stratum of abelian differentials. We begin extending Mumford's compactness theorem to strata of abelian differentials. 

\begin{sslem}[Maskit-Mumford Compactness Lemma]\label{compactness}
If $\left( (X_n, \omega_n) \right)_n$ is a sequence of translation surfaces in a fixed stratum that have area bounded from above and below and the length of their shortest saddle connection bounded below, then there is a convergent subsequence.
\end{sslem}
\begin{proof}
By Maskit (\cite{Maskit} Corollary 2) if the length of the shortest saddle connection is bounded away from zero then the length of the shortest hyperbolic curve on $X_n$ is bounded away from zero. By Mumford's compactness theorem there is a convergent subsequence of $X_n$. Passing to this subsequence let $X$ be the limit and let $U$ be a precompact neighborhood of $X$ on which the bundle of holomorphic one-forms is trivial. Since the area is bounded below and above $(X_n, \omega_n)$ eventually is contained in a bundle of compact annuli over the compact set $\overline{U}$. Therefore there is a convergent subsequence. Since no saddle connection becomes short the sequence remains in the same fixed stratum.
 \end{proof}
 
 \begin{ssthm}\label{useful0}
Suppose that $f: (X, \omega) \ra (Y, \eta)$ is a simple translation covering. Let $\M$ be the $\GL_2 \R$ orbit closure of $(X, \omega)$ and let $\N$ be the $\GL_2 \R$ orbit closure of $(Y, \eta)$. Every translation surface in $\M$ is a translation covering of a surface in $\N$. 
\end{ssthm}
 \begin{proof}
Let $\M$ be the orbit closure of $(X, \omega)$ and $\N$ the orbit closure of $(Y, \eta)$. First notice that if $f: (X, \omega) \ra (Y, \eta)$ is a simple translation covering, then for any $g$ in $\GL_2(\R)$ we have that $gfg^{-1}: g(X, \omega) \ra g(Y, \eta)$ is a simple translation covering too. Therefore, to show that the orbit closure of $(X, \omega)$ only includes surfaces that are simple translation coverings of a surface in $\M$ it suffices to show that if $(g_i)$ is a sequence of elements of $\GL_2(\R)$ and $g_i(X, \omega)$ converges to $(X', \omega')$ then $(X', \omega')$ is a simple translation cover of some translation surface in $\N$. Let $(X_i, \omega_i) = g_i(X, \omega)$ and $(Y_i, \eta_i) = g_i (Y, \eta)$. Let $f_i = g_i f g_i^{-1}$. 

Notice that since each $f_i$ has the same degree, say $d$, the systole of $(Y_i, \eta_i)$ in the flat metric is bounded below by $\frac{\mathrm{sys}(X_i)}{d}$. Since $(X_i, \omega_i)$ converge to $(X', \omega')$ the length of the systole along the sequence $(X_i, \omega_i)$ is bounded below and hence the length of the systole along the sequence $(Y_i, \eta_i)$ is also bounded below. Since a degree $d$ map preserves area up to a factor of $\frac{1}{d}$, it follows from the Maskit-Mumford compactness lemma (Lemma~\ref{compactness}) that $(Y_i, \eta_i)$ has a convergent subsequence. After passing to this subsequence we may suppose that $(Y_i, \eta_i)$ converges to a translation surface $(Y', \eta')$ belonging to $\N$.

After deleting sufficiently many initial terms we may suppose that all $(X_i, \omega_i)$ and $(Y_i, \eta_i)$ belong to a small neighborhood of $(X', \omega')$ and $(Y', \eta')$ respectively where the zeros of the one-forms are labelled. Let $\phi_i: X' \ra X_i$ and $\psi_i: Y' \ra Y_i$ be quasiconformal maps of minimal dilatation that take labelled zeros to the corresponding labelled zeros. Suppose too, after perhaps again passing to a subsequence, that for all $i$ the ramification type over a given labelled zero of $(Y_i, \eta_i)$ is constant. Since $(X_i, \omega_i)$ and $(Y_i, \eta_i)$ converge we have that the dilatation of these $\phi_i$ and $\psi_i$ tends to $1$ as $i$ tends to infinity. Therefore, the dilatation of the map $F_i = \psi_i^{-1} \circ f_i \circ \phi_i: X' \ra Y'$ tends to $1$ as $i$ tends to infinity as well. Since a collection of quasiconformal maps of bounded dilatation is precompact, it follows that there is a subsequence of $F_i$ that tends to a quasiconformal map $F: X' \ra Y'$ of dilatation $1$, i.e. a holomorphic map. Moreover, the condition on labelled zeros implies that $\mathrm{div}(\omega') = \mathrm{div}(F^* \eta)$. Therefore, up to multiplication by scalars $F^* \eta = \omega'$. Therefore, $(X', \omega')$ is a simple translation covering of a surface in $\N$ as desired.
\end{proof}
 
 \begin{sscoro}\label{useful}
An affine invariant submanifold $\M$ is a branched covering construction if there is an $\M$-generic point that is a simple translation covering of a lower genus translation surface.
\end{sscoro}
\begin{proof}
Suppose that $(X, \omega)$ is a translation surface that is generic in $\M$ with respect to the $\GL(2, \R)$ action. Suppose that there is a map $f: (X, \omega) \ra (Y, \eta)$ that is a simple translation covering, where $(Y, \eta)$ is a lower genus translation surface. Let $\N$ be the orbit closure of $(Y, \eta)$. By Theorem~\ref{useful0} every point in $\M$ is a simple translation covering of a point in $\N$ and so $\M$ is a branched covering construction.
\end{proof}

\section{Cylinder Deformations}\label{CD} 
Throughout this section $\M$ will be an affine invariant submanifold in a component $\strata$ of a stratum. Suppose that $(X, \omega)$ is a translation surface in an affine invariant submanifold $\M$. Let $C$ and $C'$ be two cylinders on $(X, \omega)$ with core curves $\gamma_C$ and $\gamma_{C'}$. If $\gamma_C$ and $\gamma_{C'}$ are parallel in some neighborhood $U \subseteq \M$ of $(X, \omega)$, then $C$ and $C'$ are said to be $\M$-equivalent. When the affine invariant submanifold $\M$ is clear from context, $\M$-equivalent and $\M$-equivalence class will be shorted to ``equivalent" and ``equivalence class" respectively.

\begin{ssthm}[Cylinder Proportion Theorem, Proposition 3.2, Nguyen-Wright~\cite{NW}]\label{CPT}
If $C$ and $C'$ are two $\M$-equivalent cylinders and $\V$ is any equivalence class of cylinders, then 
\[ \frac{|C \cap \V|}{|C|} = \frac{|C' \cap \V|}{|C'|} \]
where $|\cdot|$ denotes area.
\end{ssthm}

Applying the matrix $u_t := \begin{pmatrix} 1 & t \\ 0 & 1 \end{pmatrix}$ to a horizontal $\M$-equivalence class of horizontal cylinders $\Cyl$ will be called (horizontally) shearing $\Cyl$. Applying the matrix $a_t:= \begin{pmatrix} 1 & 0 \\ 0 & e^t \end{pmatrix}$ will be called (vertically) dilating $\Cyl$. 

\begin{ssthm}[Cylinder Deformation Theorem, Wright~\cite{Wcyl}]\label{CDT}
Let $(X, \omega) \in \M$ be a translation surface and let $\Cyl$ be an $\M$-equivalence class of horizontal cylinders on $(X, \omega)$.  Horizontally shearing and vertically dilating $\Cyl$ by $t$ remains in $\M$ for all $t$. 
\end{ssthm}

A special feature of the hyperelliptic component of a stratum is that if two horizontal cylinders share a horizontal saddle connection then they share exactly two - one on the top of each cylinder and one on the bottom of each cylinder. This feature holds because the graph of cylinder adjacencies is a tree and because if a cylinder $C$ borders a cylinder $D$ on its top boundary, then it borders $D$ on its bottom boundary as well. The two saddle connections joining the cylinders are exchanged by the hyperelliptic involution. Given two adjacent cylinders $C$ and $D$, which border each other along saddle connections $s_1$ and $s_2$, we say that the cylinders are in transverse standard position if there is a cylinder $V$ that is contained in $C \cup D$, that contains $s_1$ and $s_2$, and that only intersects the core curves of cylinder $C$ and $D$ once. We say that $C$ and $D$ are in standard position if the core curve of $V$ is perpendicular to the core curves of cylinders $C$ and $D$. The definition is illustrated in Figure~\ref{F:tsp}.

\begin{figure}[H]
    \begin{subfigure}[b]{0.3\textwidth}
        \centering
        \resizebox{\linewidth}{!}{\begin{tikzpicture}
        		\draw (0,0) -- (2,0);
		\draw (0,1) -- (1,1);
		\draw (1,2) -- (3,2);
		\draw (2,1) -- (3,1);
		\draw[dotted] (1,0) -- (1,2);
		\draw[dotted] (2,0) -- (2,2);
		\draw[dotted] (1,1) -- (2,1);
		\node at (1.5, 2.25) {$s_1$}; \node at (1.5, -.25) {$s_1$};
		\node at (2.5, 1.5) {$C$}; \node at (.5, .5) {$D$}; \node at (1.5, 1) {$V$};
	\end{tikzpicture}
        }
        \caption{ Standard position}
    \end{subfigure}
    \qquad
        \begin{subfigure}[b]{0.3\textwidth}
        \centering
        \resizebox{\linewidth}{!}{\begin{tikzpicture}
        		\draw (0,0) -- (2,0);
		\draw (0,1) -- (1,1);
		\draw (.7,2) -- (3,2);
		\draw (2,1) -- (3,1);
		\draw[dotted] (1,0) -- (1,1) -- (.6, 2);
		\draw[dotted] (2,0) -- (2,1) -- (1.6, 2);
		\draw[dotted] (1,1) -- (2,1);
		\node at (1.2, 2.25) {$s_1$}; \node at (1.5, -.25) {$s_1$};
		\node at (2.5, 1.5) {$C$}; \node at (.5, .5) {$D$};
	\end{tikzpicture}
        }
        \caption{ Not transverse standard position  }
    \end{subfigure}
\caption{An illustration of the definition of transverse standard position} 
\label{F:tsp}
\end{figure}
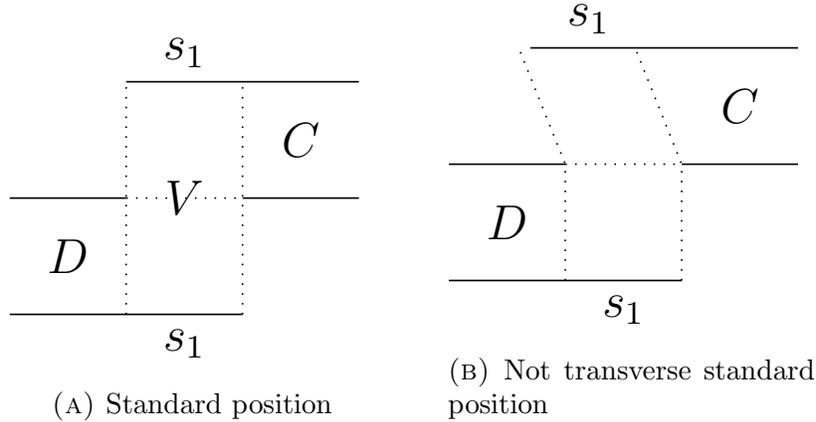

\begin{sslem}[Standard Position]\label{stp}
Suppose that $(X, \omega)$ is a translation surface in a hyperelliptic component of a stratum and suppose that $C$ and $D$ are adjacent horizontal cylinders that belong to distinct equivalence classes $\Cyl_1$ and $\Cyl_2$ respectively. 
\begin{enumerate} 
\item It is possible to shear $\Cyl_2$ so that $C$ and $D$ are in transverse standard position.
\item It is possible to shear $\Cyl_1$ and $\Cyl_2$ so that $C$ and $D$ are in standard position.
\end{enumerate}
\end{sslem}
\begin{proof}
Let $s_1$ and $s_2$ be the horizontal saddle connections lying on the boundary of $C$ and $D$. By Theorem~\ref{CDT} shear $\Cyl_1$ so that $s_1$ lies directly over $s_2$; then shear $\Cyl_2$ so that $s_2$ lies directly over $s_1$. Recall that $s_1$ and $s_2$ are exchanged by the hyperelliptic involution and hence have identical lengths. Choose $V$ to be the vertical cylinder passing through $s_1$ and $s_2$. This proves part (2), the proof of part (1) is almost completely identical.
 \end{proof}
 
In the sequel, moving to the second configuration will be called putting $C$ and $D$ in standard position. Moving to the first will be called putting $C$ and $D$ in transverse standard position while fixing $C$.

If $C$ is a cylinder call the distance $h_C$ from one boundary of the cylinder to the other its height. Let $\gamma_C^*$ be the cohomology class that is dual to the core curve of $C$ under the intersection pairing. This cohomology class requires specifying an orientation on $\gamma_C$. Usually this orientation will not matter, but we will establish the conventions that when $C$ is horizontal the orientation is left to right, when $C$ is vertical it is top to bottom, and when $C_1, \hdots, C_n$ are all $\M$-equivalent cylinders we will assume that the holonomy vectors of the core curves point in the same directions.

Let $\Cyl_1, \hdots, \Cyl_n$ be an enumeration of the $\M$-equivalence classes of horizontal cylinders on $(X, \omega)$. For each equivalence class $\Cyl_i$ there is an element of the tangent space called the standard shear which is defined to be $u_{\Cyl_i} = \sum_{c \in \Cyl_i} h_c \gamma_c^*$. A reformulation of the cylinder deformation theorem is that the standard shear is always in the tangent space of $\M$ at $(X, \omega)$. 

Let $\Cyl$ denote the collection of all horizontal cylinders on $(X, \omega)$. The twist space of $\M$ at $(X, \omega)$ is defined to be
\[ \Twist \M := \mathrm{span}_\R \left( \gamma_c^* \right)_{c \in \Cyl} \cap T^\R_{(X, \omega)} \M \]
where $T^\R_{(X, \omega)} \M = T_{(X, \omega)} \M \cap H^1(X, Z(\omega); \R)$ where $T_{(X, \omega)} \M$ has been identified with a subspace of $H^1(X, Z(\omega); \C)$. The standard shears are always in the twist space. Define the cylinder preserving space, denoted $\Pres \M$, to be all elements of $T^\R_{(X, \omega)} \M$ that pair trivially with every element of $(\gamma_c)_{c \in \Cyl}$ under the intersection pairing. It is clear that $\Twist \M \subseteq \Pres \M$. The following theorem establishes that there is always a translation surface $(X, \omega)$ in an affine invariant submanifold $\M$ where $\Twist \M = \Pres \M$.

\begin{ssthm}[Lemma 8.6, Wright~\cite{Wcyl}]\label{TP}
$\Twist \M = \Pres \M$ whenever $(X, \omega)$ has the maximum number of horizontal cylinders in $\M$.
\end{ssthm}

The next theorem indicates why having $\Twist \M = \Pres \M$ is special. In particular, it says that whenever equality is achieved $(X, \omega)$ has at least $\mathrm{rk}(\M)$ many $\M$-equivalence classes and the twist space projects to a Lagrangian in $p\left(T_{(X, \omega)} \M \right)$.

\begin{ssthm}[Lemma 8.12, Wright~\cite{Wcyl}]
If $(X, \omega)$ is a translation surface in $\M$ and $\Twist \M = \Pres \M$ then $\mathrm{span}_\R p\left( u_{\Cyl_i} \right)_{i=1}^n$ is a Lagrangian subspace of $p(T_{(X, \omega)}^\R \M)$ where $\{ \Cyl_1, \hdots, \Cyl_n \}$ is an enumeration of the $\M$-equivalence classes of horizontal cylinders and $u_{\Cyl_i}$ is the standard shear. In particular, $(X, \omega)$ contains at least $\mathrm{rk}(\M)$ distinct $\M$-equivalence classes of horizontal cylinders.
\end{ssthm}

The combination of the previous two theorems is an engine that allows us to convert the rank of an affine invariant submanifold into geometric information that picks out a translation surface where a large dimensional subspace, the twist space, of the tangent space is geometrically meaningful. Recall that, given a translation surface $(X, \omega)$ belonging to a stratum $\mathcal{H}$ and with cone points $\Sigma$, the tangent space to $\strata$ at $(X, \omega)$ can be identified with the relative cohomology group $H^1(X, \Sigma; \C)$. Let $p$ be the projection from $H^1(X, \Sigma; \C)$ onto absolute cohomology. 

\begin{ssthm}[Twist Space Decomposition Theorem, Theorem 4.7, Mirzakhani-Wright~\cite{MirWri}]\label{keep}
Let $(X, \omega)$ be a translation surface in an affine invariant submanifold $\M$. Let $\Cyl_1, \hdots, \Cyl_d$ be an enumeration of the $\M$-equivalence classes of horizontal cylinders. 
\begin{enumerate}
\item If $v \in \Twist \M$ then $v = \sum_{i=1}^d v_i$ where $v_i \in \Twist \M \cap \mathrm{span}_\R (\gamma_c^*)_{c \in \Cyl_i}$.
\item If $v_i \in  \Twist \M \cap \mathrm{span}_\R (\gamma_c^*)_{c \in \Cyl_i}$ then $v_i \in \R \cdot u_{\Cyl_i} \oplus \ker p$ where $u_{\Cyl_i}$ is the standard shear.
\end{enumerate}
\end{ssthm}

The last result regarding cylinder deformations that we need is the statement that given a collection of $d$ $\M$-equivalence classes of cylinders, where $d$ is no bigger than the rank of $\M$, it is possible to perturb the translation surface so that one $\M$-equivalence class becomes disjoint and vertical and all others remain horizontal. This result is crucial in establishing that all higher rank affine invariant submanifolds of complex dimension four in hyperelliptic components of strata are branched covering constructions over $\strata(2)$.

\begin{ssthm}[Perturbation Theorem, Lemma 5.5, Mirzakhani-Wright~\cite{MirWri}]\label{perturb}
Suppose that $(X, \omega)$ is a translation surface in an affine invariant submanifold $\M$. Suppose that $\Cyl_1, \hdots, \Cyl_d$ are $\M$-equivalence classes of horizontal cylinders such that $\{ p(u_{\Cyl_1}), \hdots, p(u_{\Cyl_d}) \}$ spans a $d$ dimensional subspace. Define $\Cyl:= \Cyl_1 \cup \hdots \cup \Cyl_{d-1}$. There is a piecewise smooth path $f:[0, 1] \ra \M$ such that $f(0) = (X, \omega)$ and along the path
\begin{enumerate}
\item All cylinders in $\Cyl$ persist and are horizontal.
\item The cylinders in $\Cyl_d$ persist, become nonhorizontal on $f(t)$ for $t>0$, and vertical on $f(1)$.
\item At all points along the path any cylinder $\M$-equivalent to $\Cyl_d$ is disjoint from any cylinder $\M$-equivalent to $\Cyl_i$ for all $i \in \{1, \hdots, d-1\}$.
\end{enumerate}
\end{ssthm}
\begin{proof}
Let $\gamma_i$ be the core curve of some cylinder in $\Cyl_i$ for each $i \in \{1, \hdots, d\}$. Consider the linear functionals $\left( \gamma_i \right)_{i=1}^{d-1}$ on $T_{(X, \omega)} \M$. Since the linear functionals factor through $p: T_{(X, \omega)} \M \ra H^1(X; \C)$ we see that the intersection of the kernel of these functionals on $p(T_{(X, \omega)} \M)$ is at least dimension $(2r-d)+1$. Therefore there is a vector $v \in T_{(X, \omega)} \M$ that is not in the kernel of $p$, not in the cylinder preserving space, and such that $v(\gamma_i) = 0$ for $1 \leq i \leq d-1$. 

Consider the path $(X, \omega) + tv$ for $t \geq 0$. This path is well-defined and remains in $\M$ for some range $t \in [0, T]$. Since $v(\gamma_i)=0$ for each $1 \leq i \leq d-1$ it follows by definition of $\M$-equivalence that $\Cyl_i$ persist (perhaps after decreasing $T$) and remain horizontal for $1 \leq i \leq d-1$. Since $v$ is not in the cylinder preserving space we see that, perhaps after decreasing $T$, $\Cyl_d$ also persists and becomes non-horizontal. By perhaps decreasing $T$ again we may suppose by Mirzakhani-Wright Lemma 5.1~\cite{MW} that any cylinder $\M$-equivalent to $\Cyl_d$ remains disjoint from any cylinder $\M$-equivalent to $\Cyl_i$ at all points along the path. 

Now horizontally shear $(X, \omega) + Tv$ until $\Cyl_d$ becomes vertical. The equations on period coordinates cutting out $\M$ may be parallel translated along this path and so we see that along this path no new cylinders become $\M$-equivalent to cylinders in $\Cyl_i$ for any $i$. 
 \end{proof}
 
 In the following proof we will say that two heights (of cylinders) $h_1$ and $h_2$ are $a$-close if $|h_1 - h_2| \leq a$. 

\begin{sslem}\label{cylinderlemma}
Let $\M$ be an affine invariant submanifold. Suppose that for any horizontally periodic $(X, \omega) \in \M$ such that $\Twist \M = \Pres \M$, the heights of any two $\M$-equivalent horizontal cylinders are identical. Then the heights of any two equivalent cylinders on any translation surface in $\M$ are identical.
\end{sslem}
\begin{proof}
The following proof is almost identical to the proof of Theorem 5.1 in Wright~\cite{Wcyl}. Let $\mathcal{C}$ be an equivalence class of horizontal cylinders on $(X_0, \omega_0) \in \mathcal{M}$. Let $\epsilon > 0$ be taken to be smaller than the heights of all cylinders in $\mathcal{C}$. Consider the following iterative process:
\begin{enumerate}
\item If $(X_i, \omega_i)$ is horizontally periodic and $\mathrm{Twist}_{(X_i, \omega_i)} = \mathrm{Pres}_{(X_i, \omega_i)}$ then end the process. Otherwise, Smillie-Weiss~\cite{SW2} (Corollary 6) implies that the horocycle flow accumulates on a horizontally periodic translation surface $(Y_i, \eta_i)$. Since the horocycle flow of $(X_i, \omega_i)$ becomes arbitrarily close to $(Y_i, \eta_i)$ we may assume that there is some $T$ so that there is a perturbation of $u_T (X_i, \omega_i)$ - through surfaces in $\M$ - to $(Y_i, \eta_i)$ so that the cylinders on $(X_i, \omega_i)$ persist on $(Y_i, \eta_i)$ and have heights that are $\frac{\epsilon}{2 \cdot ( g + |\Sigma| - 1 )}$-close to their height on $(X_i, \omega_i)$. By definition of $\M$-equivalence class, if $\M$-equivalent cylinders persist under a perturbation of a translation surface through translation surfaces in $\M$ then they remain $\M$-equivalent. Therefore, the cylinders in $\mathcal{C}$ persist on $(Y_i, \eta_i)$, remain $\M$-equivalent there, and have heights that are $\frac{\epsilon}{2 \cdot ( g + |\Sigma| - 1 )}$ close to their original height. 
\item If $\mathrm{Twist}_{(Y_i, \eta_i)} = \mathrm{Pres}_{(Y_i, \eta_i)}$ then end the process.  Otherwise there is an element $v$ in the cylinder preserving space that fails to be in the twist space. Flowing in the $\sqrt{-1} \cdot v$ direction for an arbitrarily small positive time leads to a surface $(X_{i+1}, \omega_{i+1})$ on which the cylinders in $\Cyl$ persist, are $\M$-equivalent and, have heights that are $\frac{\epsilon}{2 \cdot \left( g+|\Sigma|-1\right)}$-close to their heights on $(Y_i, \eta_i)$; but where the horizontal cylinders from $(Y_i, \eta_i)$ although they persist, do not cover $(X_{i+1}, \omega_{i+1})$. Now return to step 1.
\end{enumerate}
Since the number of cylinders increases with each iteration and the largest possible number of horizontal cylinders is $g+|\Sigma|-1$ the process terminates after at most $g+|\Sigma|-1$ cycles. Each iteration alters the height of each cylinder in $\mathcal{C}$ by at most $\frac{\epsilon}{g + |\Sigma|-1}$. Since there are at most $g+|\Sigma|-1$ iterations, when the process terminates each cylinder has had its height altered by at most $\epsilon$. Moreover, at the end of the process, $\Cyl$ is a collection of $\M$-equivalent cylinders on a translation surface $(X, \omega)$ with heights $\epsilon$-close to their original heights and where $\Twist \M = \Pres \M$. By hypothesis, these cylinders on $(X, \omega)$ have identical heights. Therefore, the cylinders in $\mathcal{C}$ on $(X_0, \omega_0)$ all have heights that are $\epsilon$-close to one another for arbitrarily small $\epsilon$. 
 \end{proof}
 
%
%

\section{Odd Dimensional Orbit Closures}\label{ODOC}

Throughout this section $\M$ will be an affine invariant submanifold in $\hyp(g-1,g-1)$ of odd complex dimension $2r+1$. The two main results of this section are the following:

\begin{ssthm}\label{T:Odd1}
If $\M$ is higher rank and $(X, \omega)$ is a horizontally periodic translation surface with the twist space and cylinder preserving space coinciding, then $(X, \omega)$ has $g+1$ horizontal cylinders and equivalent horizontal cylinders are nonadjacent and have identical heights.
\end{ssthm}

\noindent This theorem will be the key to showing that if $\M$ is higher rank and odd dimensional then it is a branched covering of $\hyp(r-1,r-1)$. Before stating the second result we associate to a collection of horizontal cylinders $\cC$ the deformation $\displaystyle{\sigma_{\cC} =  \sum_{c \in \cC} h_c \gamma_c^* }$ where $h_c$ is the height of the horizontal cylinder $c$, $\gamma_c$ is the core curve of $c$ (oriented from left to right), and $\gamma_c^*$ is the dual cohomology class corresponding to $\gamma_c$. The second main result of this section is the following.

\begin{ssthm}\label{T:Odd2}
If $\M$ is rank one and $(X, \omega)$ is horizontally periodic and the twist space is spanned by $\sigma_{\cC_1}$ and $\sigma_{\cC_2}$ where $\cC_1$ and $\cC_2$ are disjoint collections of cylinders, then if two cylinders belong to the same $\cC_i$, for $i = 1$ or $2$, they have the same height and are not adjacent.
\end{ssthm}

\noindent The application of Theorem~\ref{T:Odd2} to the study of higher rank affine invariant submanifolds is less obvious than the application of Theorem~\ref{T:Odd1}. After all, Theorem~\ref{T:Odd1} is expressly about higher rank affine invariant submanifolds whereas Theorem~\ref{T:Odd2} is expressly about rank one affine invariant submanifolds. The connection to our problem is that we will classify the rank two rel zero affine invariant submanifolds by degenerating to a rank one affine invariant submanifold whose twist space is as described in Theorem~\ref{T:Odd2}.

We now fix some notation that will be used in the sequel. Let $p: T_{(X, \omega)} \M \ra H^1(X, \C)$ be the projection of the tangent space of $\M$ at $(X, \omega)$ onto absolute cohomology. Let $\eta$ be a nonzero relative cohomology class contained in $\ker(p) \cap H^1(X, \Sigma; \R)$ where $\Sigma$ is the zero set of $\omega$. In other words, $\eta$ is a relative deformation on $(X, \omega)$ that preserves the horizontal cylinders.

\begin{ssthm}\label{reldef}
Let $(X, \omega)$ be a translation surface in $\M$ with at least one horizontal cylinder. The following are equivalent:
\begin{enumerate}
\item $(X, \omega)$ has $g+1$ horizontal cylinders.
\item $\Twist \M = \Pres \M$
\item The relative deformation $\eta$ is contained in the twist space.
\item The Lindsey tree is a tree and not just a half-tree.
\end{enumerate}
When any of these equivalent conditions holds label the cylinders $\{c_0, \hdots, c_g\}$ and the core curves $\{\gamma_0, \hdots, \gamma_g\}$, it follows that $\ds{ \eta = \sum_{i=0}^g (-1)^{d(c_0, c_i)} \gamma_i^*}$ where $d(c_i, c_0)$ is the distance between $c_i$ and $c_0$ in the Lindsey tree. 
\end{ssthm}
\begin{proof}
($1 \Rightarrow 2$) By Theorem~\ref{TP} if $(X, \omega)$ has $g+1$ horizontal cylinders then $\Twist \M = \Pres \M$. 

($2 \Rightarrow 3$) Since any relative deformation fixes the core curves of every cylinder it follows that if $\Twist \M = \Pres \M$ then $\eta$ is contained in $\Twist \M$. 

($3 \Rightarrow 1$) Now suppose that $\eta$ is contained in the twist space, i.e $\ds{ \eta = \sum_c a_c \gamma_c^* }$ where the sum is taken over the collection of horizontal cylinders and $a_c \in \R$.  Since $(X, \omega)$ is a translation surface in a hyperelliptic connected component whenever $v$ and $v'$ are adjacent cylinders there is an absolute period contained in $v \cup v'$ that intersects $\gamma_v$ and $\gamma_{v'}$ exactly once with the same orientation. This follows since any two adjacent cylinders are adjacent on both boundaries. Since this period must be unchanged by the relative deformation $\eta$ it follows that $a_v + a_{v'} = 0$ for any two adjacent cylinders $v$ and $v'$. Since the vertices are arranged in a tree we have that up to scaling the purely relative deformation is $\ds{ \sum_{c} (-1)^{d(c, c_0)} \gamma_c^*}$ where $c_0$ is some fixed cylinder. Finally, the Lindsey tree of $(X, \omega)$ cannot have any half-edges since they yield nonzero elements of absolute homology that are supported in a single cylinder and hence will be altered by $\ds{ \sum_{c} (-1)^{d(c, c_0)} \gamma_c^* }$. Therefore, the Lindsey tree of $(X, \omega)$ is a tree (not just a half-tree) and $(X, \omega)$ has $g+1$ horizontal cylinders.

($1$ if and only if $4$) $(X, \omega)$ has $g+1$ horizontal cylinders if and only if there are $g$ full edges in the Lindsey tree (equivalently $2g$ half-edges with each full edge counted as two half-edges). Since the total number of half edges for a Lindsey tree corresponding to a surface in $\hyp(g-1,g-1)$ is $2g$ there are $g$ full edges if and only if there are no half-edges. 

 \end{proof}

\begin{ssthm}\label{rel=1}
If $\M$ has rank $r>1$ and if $(X, \omega) \in \M$ has $g+1$ horizontal cylinders then $(X, \omega)$ has $r+1$ equivalence classes. If $\Cyl_0, \hdots, \Cyl_r$ is an enumeration of the equivalence classes then
\[ \Twist \M = \mathrm{span}_\R \{ u_{\Cyl_0}, \hdots, u_{\Cyl_r} \} \]
where $u_{\Cyl_i}$ is the standard shear of $\Cyl_i$. Any two cylinders in the same equivalence class have identical heights and are an even distance apart in the Lindsey tree. Moreover, $\M$ is defined over $\Q$.
\end{ssthm}
\begin{proof}
If $(X, \omega)$ has $g+1$ cylinders then $\Twist \M = \Pres \M$. Since $\M$ is higher rank it follows that there are at least two $\M$-equivalence classes of horizontal cylinders and so Theorem~\ref{reldef} implies that $\eta$ is not supported on a single $\M$-equivalence class. By the twist space decomposition theorem (Theorem~\ref{keep}) it follows that the only element of the twist space supported on a single equivalence class is the standard shear and hence $\eta$ is a real linear combination of standard shears, i.e. 
\[ \sum_{i=0}^g (-1)^{d(c_i, c_0)} \gamma_i^* = \sum_{i=1}^n a_i \sum_{c \in \mathcal{C}_i} h_c \gamma_c^* \]
Since $\gamma_c^*$ are all linearly independent in $T_{(X, \omega)} \strata$ where $\strata$ is the component of the stratum containing $(X, \omega)$ it follows that any two equivalent cylinders have the same height and are an even distance apart in the Lindsey tree. Finally we see that $\Twist \M$ is spanned by standard shears and its projection to absolute cohomology has a one-dimensional kernel. It follows that $(X, \omega)$ has $r+1$ equivalence classes of cylinders.

 By Theorem 7.1~\cite{Wcyl} to show that $\M$ is defined over $\Q$ it suffices to show that the ratio of lengths of core curves of any two equivalent horizontal cylinders is always rational. Notice that the only element of the twist space supported on a single equivalence class is the standard shear. This implies that the ratio of moduli of any two equivalent cylinders is rational since otherwise there would be a deformation supported on the equivalence class and contained in the tangent space of $\M$ that was not the standard shear. Since the heights of any two equivalent cylinders are identical the result follows.
 \end{proof}

\begin{sscoro}\label{same-heights}
Any two equivalent horizontal cylinders on any translation surface in $\M$ have identical heights when $\M$ is higher rank.
\end{sscoro}
\begin{proof}
This is immediate from Lemma~\ref{cylinderlemma} and Theorem~\ref{rel=1}.
 \end{proof}

\begin{sscoro}\label{ss}
If $(X, \omega) \in \M$ is a translation surface with at least one horizontal cylinder and $\M$ is higher rank then the twist space is spanned by standard shears of the $\M$-equivalence classes of horizontal cylinders.
\end{sscoro}
\begin{proof}
By Theorem~\ref{reldef}, Theorem~\ref{rel=1} establishes this result when the twist space contains $\eta$. When the twist space does not contain $\eta$ the result is immediate by the twist space decomposition theorem (Theorem~\ref{keep}).
 \end{proof}

\begin{ssthm}\label{simple}
Suppose that $\M$ has rank one and $(X, \omega)$ has $g+1$ cylinders. Suppose furthermore that $\mathcal{C}_0, \mathcal{C}_1$ is a partition of the cylinders so that $T_{(X, \omega)}^\R \M$ contains $\ds{ \sum_{c \in C_0} h_c \gamma_c^*}$. Then any two cylinders in $\mathcal{C}_0$ (resp. $\mathcal{C}_1$) have identical heights and are an even distance apart in the Lindsey tree. Moreover, $\M$ is defined over $\Q$ and hence is a branched covering construction of $\strata(0,0)$.
\end{ssthm}
\begin{proof}
Since $T_{(X, \omega)}^\R \M$ contains $u_0 := \ds{ \sum_{c \in C_0} h_c \gamma_c^*}$ and contains the standard shear $\ds{ \sum_{c \in C_0 \cup C_1} h_c \gamma_c^*}$, it contains $u_1 := \ds{ \sum_{c \in C_1} h_c \gamma_c^*}$. Since $\M$ is one-dimensional and has one dimension of rel, it follows that the relative deformation $\eta$ is a real linear combination of $u_1$ and $u_2$. The proof is now identical to the proof of Theorem~\ref{rel=1}.
 \end{proof}

%
%

\section{A Partial Compactification of Strata of Abelian Differentials}\label{Boundary1}
A natural partial compactification of a stratum of abelian differentials is the bundle of stable finite volume abelian differentials over the Deligne-Mumford compactification of moduli space. However, it is often more natural from the perspective of flat geometry to consider a quotient of this space that ignores components of the underlying curve on which the stable one-form vanishes. For the remainder of the paper this quotient will be called the partial compactification of a stratum. It was introduced in McMullen~\cite{McNav} and extensively studied in Mirzakhani-Wright~\cite{MirWri}.

One aspect of this partial compactification is that boundary translation surfaces may have marked points. In the following, if we specify a boundary translation surface $(X, \omega, \Sigma)$ then we understand that $X$ may be a disjoint union of Riemann surfaces, $\omega$ is a holomorphic one form that does not have zero area on any component of $X$, and $\Sigma$ is a collection of marked points that includes all the zeros of $\omega$ as well as potentially new marked points that arise. One of the difficulties that we will tackle in the next section is ensuring that marked points do not arise.

The following example of convergence to the boundary is Example 3.1 of~\cite{MirWri}. Suppose that $\M$ is an affine invariant submanifold and let $(X, \omega)$ be a translation surface in $\M$ with an $\M$-equivalence class of horizontal cylinders $\Cyl$. Suppose that $\Cyl$ does not cover $(X, \omega)$ and that the union of cylinders in $\Cyl$ contains a vertical saddle connection. Let $(X_t, \omega_t)$ be $(X, \omega)$ with $\Cyl$ vertically shrunk by $e^t$. By the cylinder deformation theorem it follows that $(X_t, \omega_t)$ is a smooth path in $\M$. This sequence converges to a translation surface $(X_\infty, \omega_\infty)$ on the boundary of $\M$. To form $(X_\infty, \omega_\infty)$ use the following procedure:

\begin{enumerate}
\item Delete every cylinder in $\Cyl$ from $(X, \omega)$ to form a translation surface with boundary. The boundary is a collection of saddle connections. 
\item If there is a point $p$ on the boundary of $(X, \omega) - \Cyl$ that is joined to a zero or marked point of $(X, \omega)$ by a vertical line that is completely contained in $\Cyl$ then mark $p$. On the boundary translation surface these points will be either marked points or zeros of the boundary holomorphic one-form. Adding in these marked points may divide saddle connections into several smaller saddle connections.
\item If two saddle connections on the boundary of $(X, \omega) - \Cyl$ were connected by a vertical line that was completely contained in $\Cyl$ then glue the two saddle connections together. The resulting translation surface is $(X_\infty, \omega_\infty)$.
\end{enumerate}

This construction is called a horizontal cylinder collapse. The analogous construction with an $\M$-equivalence class of vertical cylinders will be called a vertical cylinder collapse. 

\begin{ssthm}[Mirzakhani-Wright~\cite{MirWri}; Proposition 2.3]
Given a sequence $(X_n, \omega_n, \Sigma_n)$ of translation surfaces converging to $(X, \omega, \Sigma)$ there are collapse maps $f_n: X_n \ra X$ such that 
\begin{enumerate}
\item There is a neighborhood $U_n$ of $\Sigma_n$ so that $f_n: X_n - U_n \ra X$ is a diffeomorphism onto its image with inverse $g_n$.
\item The injectivity radius of $U_n$ goes to zero uniformly in $n$.
\item $g_n^* \omega_n$ converges to $\omega$ in the compact open topology.
\end{enumerate}
Define the space of vanishing cycles to be 
\[ V_n = \ker\left( f_n: H_1(X_n, \Sigma_n; \C) \ra H_1(X, \Sigma; \C) \right) \]
For large enough $n$ this space is constant and will be called $V$.
\end{ssthm}

Returning to the example of the horizontal cylinder collapse: the space of vanishing cycles will be the subspace spanned by the heights of the horizontal cylinders in $\Cyl$ and by any homology classes that have a representative supported in a subsurface that collapses to a point in the limit. By ``height" of a cylinder we mean any saddle connection joining a zero on one boundary of a cylinder to a zero on the other and that intersects the core curve of the cylinder exactly once. 

\begin{ssthm}[Degeneration Theorem; Theorem 2.7, Mirzakhani-Wright~\cite{MirWri}]\label{MirWri}
Let $\M$ be an affine invariant submanifold. Let $(X_n, \omega_n, \Sigma_n)$ be translation surfaces in $\M$ converging to $(X_\infty, \omega_\infty, \Sigma)$. Let $(Y, \eta)$ be a component of $(X_\infty, \omega_\infty)$ and let $\iota: (Y,\eta) \hookrightarrow (X_\infty, \omega_\infty)$ be the inclusion map. Let $V$ be the space of vanishing cycles. The $\GL_2 \R$ orbit closure of $(Y, \eta)$ is an affine invariant submanifold $\M'$ whose tangent space is 
\[ T_{(Y, \eta)} \M' = \iota^* \left( T_{(X_n, \omega_n)} \cap \mathrm{Ann}(V_n) \right) \]
where $T_{(X_n, \omega_n)} \cap \mathrm{Ann}(V_n)$ has been identified with the tangent space at the boundary by parallel transport. As a consequence, $\dim_\C \M' < \dim_\C \M$ and $\mathrm{rk}(\N) \leq \mathrm{rk}(\M)$ where the inequality is strict if $\mathrm{rel}(\M) = 0$
\end{ssthm}

%
%

\section{Degenerating to the Boundary in Hyperelliptic Components of Strata}\label{Boundary2}

The strategy for classifying higher rank orbit closures in hyperelliptic components in this paper is an inductive one. We will study affine invariant submanifolds $\M$ by studying their boundary in the Mirzakhani-Wright partial compactification. However, we immediately confront two potential obstacles to our approach. First, the boundary of $\M$ might contain marked points and second, it might not belong to a hyperelliptic component. The goal of this section is to devise degenerations that avoid these two potential problems.

Let $(X, \omega)$ be a horizontally periodic translation surface in a hyperelliptic component of a stratum of abelian differentials on genus $g > 1$ Riemann surfaces. Suppose that $\cV$ is a collection of vertical cylinders that contains at least one horizontal saddle connection. Suppose too that $\cC$ is a collection of horizontal cylinders that are not self-adjacent and that contains a horizontal saddle connection. Suppose that neither collection of cylinders covers the surface. We would like to collapse these cylinders to pass to a surface on the boundary of the stratum of abelian differentials. To be clear, collapsing a collection of vertical cylinders means collapsing them horizontally, i.e. applying the matrix $\displaystyle{ \begin{pmatrix} e^{-t} & 0 \\ 0 & 1 \end{pmatrix} }$ to the vertical cylinders (while fixing the rest of the translation surface) and taking the limit as $t$ goes to infinity. Similarly, collapsing a collection of horizontal cylinders means vertically collapsing them.

Every degeneration of a translation surface that we use will be collapsing a collection of vertical or horizontal cylinders. In this section, we will show the following

\begin{ssthm}\label{T:two-degenerations}
Collapsing either $\cV$ or $\cC$ degenerates to a disjoint union of translation surfaces in hyperelliptic components of strata of abelian differentials. 
\end{ssthm}

\noindent Recall that in genus one we have defined the hyperelliptic components to be $\mathcal{H}(0)$ and $\mathcal{H}(0,0)$. Moreover, the boundary of $\M$ that is referred to in the theorem is the boundary in the sense of the Mirzakhani-Wright partial compactification. Finally we remark that a collection $S$ of parallel cylinders is said to be self-adjacent if two cylinders in $S$ are adjacent or if there is a single cylinder in $S$ whose two boundaries are glued together along a saddle connection. To fix notation, let $\Gamma$ be the Lindsey tree of $(X, \omega)$ and let $J$ be the hyperelliptic involution on $(X, \omega)$. 

We begin by establishing Theorem~\ref{T:two-degenerations} in the case of vertical collapses of cylinders. The intuition for the result is the following. Corollary~\ref{LTC} tells us, roughly, that whenever we glue together cylinders of hyperelliptic combinatorial type (i.e. ones that look like the cylinders in Figure~\ref{fig:CT}) along a tree that we must get a translation surface in a hyperelliptic component of a stratum. When we collapse a collection of vertical cylinders on a horizontally periodic translation surface, the horizontal cylinders persist on the boundary; they still have hyperelliptic combinatorial type; and they are still glued together in a disjoint union of trees. So the boundary translation surface must be a disjoint union of translation surfaces in hyperelliptic components of strata.

\begin{sslem}[Vertical Cylinder Collapse Lemma]\label{collapse0}
Collapsing $\cV$ degenerates the translation surface to a disjoint union of translation surfaces in hyperelliptic components of strata of abelian differentials. 
\end{sslem}
\begin{proof}
By Corollary~\ref{LTC} a translation surface belongs to a hyperelliptic component if and only if it is constructed in the following way:
\begin{enumerate}
\item Fix a tree with a cyclic ordering around each vertex. For each degree $n$ vertex in the tree associate a horizontal cylinder of hyperelliptic combinatorial, i.e. the cylinder shown in Figure~\ref{fig:CT} up to changing the lengths of the saddle connections and horizontally shearing.
\item When two vertices are joined along an edge, open up the corresponding edges on the appropriate horizontal cylinders and glue the two cylinders together.  
\end{enumerate}
This provides both blueprints on how to build translation surfaces in hyperelliptic components and a certificate that a surface belongs to a hyperelliptic component. 

Recall that any cylinder in $(X, \omega)$ is invariant under the hyperelliptic involution. Therefore, if the proportion of a horizontal saddle connection $s$ contained in a cylinder $V$ is $p$, then the proportion of the saddle connection $J(s)$ contained in $V$ is also $p$. Collapsing a collection of vertical cylinders $\cV$ passes to a boundary translation surface $(Y, \eta)$ that can be constructed from $(X, \omega)$ in the following way.

\begin{enumerate}
\item Let $\Gamma'$ be the tree that is formed from $\Gamma$ when all the edges corresponding to saddle connections completely contained in $\cV$ are deleted. If a node has no edges attached to it in $\Gamma'$, then delete it. 
\item The remaining nodes correspond to horizontal cylinders on $(X, \omega)$ that persist on the boundary translation surface $(Y, \eta)$. To change a horizontal cylinder $C$ on $(X, \omega)$ to the corresponding one on $(Y, \eta)$ take each saddle connection $s$ on the boundary $C$ and change its length to the length of $s$ not contained in $\cV$. The new cylinder on $(Y, \eta)$ still has hyperelliptic combinatorial type. 
\end{enumerate}

Since $(Y, \eta)$ can be constructed by gluing together horizontal cylinders of hyperelliptic combinatorial type along a disjoint union of trees, Corollary~\ref{LTC} implies that $(Y, \eta)$ is a disjoint union of translation surfaces in hyperelliptic components of strata of abelian differentials. 
 \end{proof}

Now we will complete the proof of Theorem~\ref{T:two-degenerations} by analyzing degenerations that involve collapsing a collection of horizontal cylinders. 

\begin{sslem}[Horizontal Cylinder Collapse Lemma]\label{collapse1}
Collapsing $\cC$ degenerates to a disjoint union of translation surfaces in hyperelliptic components of strata. 
\end{sslem}
\begin{proof}
Recall that the boundary translation surface $(X_\infty, \omega_\infty)$ may be constructed from $(X, \omega)$ in the following way:
\begin{enumerate}
\item Delete every cylinder in $\Cyl$ from $(X, \omega)$. The result is a translation surface with boundary where the boundary consists of saddle connections that formerly bordered cylinders in $\Cyl$.
\item For each saddle connection on the boundary of $(X, \omega) - \Cyl$ add a marked point to the saddle connection for each point $p$ such that the vertical line contained in $\Cyl$ passing through $p$ terminates at a zero of $\omega$. Since the newly added marked points are invariant under the hyperelliptic involution each cylinder on $(X, \omega) - \Cyl$ continues to have hyperelliptic combinatorial type.
\item Glue together saddle connections on the boundary of $(X, \omega) - \Cyl$ which were connected by a vertical line contained in $\Cyl$. This saddle connection identification is again invariant under the hyperelliptic involution. Let $\Gamma'$ be $\Gamma$ with vertices in $\Cyl$ deleted, edges connected to $\Cyl$ deleted, and new edges added between two cylinders that are connected by a vertical line in $\Cyl$.
\end{enumerate}
By Corollary~\ref{LTC}it remains to verify that the cylinder diagram $\Gamma'$ is a tree. Notice that $\Gamma'$ is constructed by deleting each vertex $v$ in $\Cyl$ and adding in edges between vertices that were adjacent to $v$. To show that $\Gamma'$ is a tree it suffices to show that whenever a vertex $v$ is deleted no cycle forms among the vertices that were formerly adjacent to $v$. 

Rephrased, it suffices to show the following. Suppose that $C$ is a single cylinder of hyperelliptic type. Let $s_1, \hdots, s_n$ be saddle connections on the boundary of $C$. Let $G$ be a graph with $n$ vertices labelled $\{1, \hdots, n\}$. Connect vertices $i$ and $j$ if $s_i$ and $J(s_j)$ are connected by a vertical line. Then $G$ is a disjoint union of trees. This follows immediately from the following lemma.
\begin{sublem}
Suppose that there is a graph $G$ with vertices labelled $\{1, \hdots, n\}$, which we imagine as being cyclically ordered. Let $C_i$ be the set of vertices connected to vertex $i$. Suppose that for all $i$ there is an increasing subset of $\{1, \hdots, n\}$ (perhaps wrapping around $0$) that we will denote $I_i = (k_i, k_i +1, \hdots, \ell_i)$ such that 
\begin{enumerate}
\item $C_i \subseteq I_i$ for all $i$.
\item $I_i \cap I_{i+1} = \{k_i \} = \{ \ell_{i+1} \}$ if $n>2$.
\end{enumerate}
Then $G$ is a disjoint union of trees. 
\end{sublem}
\begin{proof}
Proceed by induction on $n$. The $n=2$ base case is trivial. Now suppose that $n>2$. Suppose to a contradiction that $G$ contains a cycle. Let $\gamma$ be the shortest cycle in $G$. If the cycle fails to contain every vertex, then delete the vertices not contained in $\gamma$ from $G$. The induction hypothesis implies that the resulting graph cannot contain a cycle, which is a contradiction. Suppose then without loss of generality that $\gamma$ involves every vertex. 

If the degree of a vertex $i$ is greater than two then we may suppose that $C_i = \{k_i, k_{i+1}, \hdots, \ell_i\}$. By the hypotheses, vertices $k_i+1, \hdots, \ell_i -1$ only connect to vertex $i$. Since $\gamma$ is the shortest cycle in $G$ it does not pass through vertices $k_i + 1, \hdots, \ell_i-1$ contrary to our assumption that $\gamma$ passes through every vertex. It follows that every vertex in $G$ has degree two.

Since every vertex in $G$ has degree two and appears exactly once in $\gamma$ it follows that $C_i = \{k_i, k_i+1\}$ and $C_{i+1} = \{k_i-1, k_i\}$ for all $i$. Therefore, the path $\gamma$ is $(\gamma_1, \gamma_2, \hdots, \gamma_n) = (1, k_1, 2, k_1-1, 3, k_1-2, \hdots)$. Notice that the order of the odd vertices is  $(\gamma_1, \gamma_3, \hdots) = (1, 2, \hdots, n)$. Since every vertex appears exactly once in $\gamma$ this implies that $n=2m+1$ and the path $\gamma$ is $(1, m+2, 2, m+3, \hdots, m, 2m+1, m+1)$. However the order of the even vertices must be $(\gamma_2, \gamma_4, \hdots) = (m+2, m+1, m, \hdots)$. Therefore the cyclic order $(1,2, \hdots, n)$ and the cyclic order $(n, n-1, \hdots, 1)$ must be the same order. This only occurs when $n=2$. But we have supposed that $n>2$, which is a contradiction.
 \end{proof}
\text{}
 \end{proof}

As mentioned earlier all of the degenerations that we will use in this paper will be either a horizontal or a vertical cylinder collapse. So let's analyze this situation. Let $\M$ be a higher rank affine invariant submanifold in a hyperelliptic component of a stratum. Let $(X, \omega)$ be a horizontally periodic translation surface. 

\begin{sslem}\label{L:X-twist}
If $(X, \omega)$ is horizontally periodic then its twist space is spanned by standard shears
\end{sslem}
\begin{proof}
By Theorem~\ref{keep} if this is not the case then there is an $\M$-equivalence class of cylinders $\cC$ and a twist space deformation supported on $\cC$ that is rel. However, by Theorem~\ref{reldef} this is only possible if $\cC$ contains $g+1$ horizontal cylinders. However, $(X, \omega)$ contains at most $g+1$ horizontal cylinders and if all of them belong to one equivalence class then $(X, \omega)$ belongs to a rank one orbit closure, which contradicts the hypothesis that $\M$ is higher rank.
\end{proof}

Let $\cC$ be an equivalence class of either vertical cylinders or horizontal cylinders that do not form a self-adjacent equivalence class on $(X, \omega)$. Let $(Y, \eta)$ be the translation surface formed by collapsing $\cC$ and let $(Z, \zeta)$ be a component of $(Y, \eta)$. Let $\N$ be the affine invariant submanifold in the boundary of $\M$ that contains $(Z, \zeta)$. 

\begin{sslem}[Twist Space Degeneration Lemma]\label{L:tsdl}
Let $k$ be the number of pairwise $\M$-inequivalent horizontal cylinders that persist on $(Z, \zeta)$. If $k \geq 2$, the following hold:
\begin{enumerate}
\item The dimension of the twist space of $(Z, \zeta)$ in $\N$ is $k$.
\item If $\N$ is higher rank then two horizontal cylinders in $(Z, \zeta)$ are $\N$-equivalent if and only if their preimages on $(X, \omega)$ were $\M$-equivalent.
\item If $\N$ is rank one but $(Z, \zeta)$ contains two cylinders from distinct $\M$-equivalence classes $\cC_1$ and $\cC_2$ on $(X, \omega)$, then no two cylinders from $\cC_i$ are adjacent on $(Z, \zeta)$ and any two such cylinders have identical height for $i = 1, 2$.
\item If the twist space and cylinder preserving space coincide on $(X, \omega)$ and a cylinder from every equivalence class persists on $(Z, \zeta)$, then $\M$ is even-dimensional, $\N$ is odd-dimensional, and any two equivalent cylinders that persist on $(Z, \zeta)$ are not adjacent and have identical heights.
\end{enumerate}
\end{sslem}
For the first claim, let $\cC$ be a maximal collection of horizontal cylinders on $(Z, \zeta)$ that were equivalent on $(X, \omega)$. Let $u_\cC$ be the standard shear of these cylinders on $(Z, \zeta)$. By the degeneration theorem of Mirzakhani-Wright~\cite{MirWri} (Theorem~\ref{MirWri} in this paper, Theorem 2.7 in \cite{MirWri}) each $u_{\cC}$ is a tangent vector on $(Z, \zeta)$. For any collection of cylinders $C$, the standard shear $u_C$ belongs to the twist space. Since the equivalence classes are pairwise disjoint, the standard shears they induce on $(Z, \zeta)$ all belong to the twist space and so the twist space is at least $k$ dimensional. We will complete the proof of the first claim after the proof of the third claim.

For the second claim, suppose that $\N$ is higher rank. Any two $\M$-equivalent cylinders that persist on the boundary remain $\N$-equivalent since the colinearity of their core curves is an algebraic equation that extends to the boundary. Suppose that $\cC_1, \hdots, \cC_n$ are the equivalence classes of cylinders that persist on $\N$. Any $\N$-equivalence class of horizontal cylinders must $\bigcup_S \cC_i$ for some $S$ a subset of $\{1, \hdots, n\}$. By Lemma~\ref{L:X-twist} the twist space of $\N$ is spanned by standard shears of equivalence classes of horizontal cylinders. By the proof of the first claim the twist space contains $u_{\cC_1}, \hdots, u_{\cC_n}$. Therefore, $\cC_1, \hdots, \cC_n$ are exactly the $\N$-equivalence classes. In this case we see that the the dimension of the twist space is exactly $k$.

For the third claim, suppose that exactly two equivalence classes of cylinders $\cC_1$ and $\cC_2$ persist on $\N$ and suppose that $\N$ has rank one. By Theorem~\ref{T:Odd2} the cylinders in $\cC_i$ are not adjacent and all have identical heights for each $i = 1, 2$. In this case we also have that the dimension of the twist space is $k$. This completes the proof of the third claim.

For the fourth claim, suppose that the twist space and cylinder preserving space coincide on $(X, \omega)$ and that a cylinder from every equivalence class persists on $(Z, \zeta)$. By the first claim, the dimension of the twist space of $(Z, \zeta)$ in $\N$ and the dimension of the twist space of $(X, \omega)$ in $\M$ are identical. The assumption that the twist space and cylinder preserving space coincide on $(X, \omega)$ guarantees that the twist space on $(X, \omega)$ is maximal dimensional and hence has dimension $\mathrm{rank}(\M) + \mathrm{rel}(\M)$. By the degeneration theorem of Mirzakhani-Wright~\cite{MirWri} (Theorem~\ref{MirWri} in this paper, Theorem 2.7 in \cite{MirWri}), 
\[ \mathrm{rank}(\N) \leq \mathrm{rank}(\M) \]
\noindent with strict inequality when $\mathrm{rel}(\M) = 0$. Since $\N$ belongs to a hyperelliptic component of a stratum $\mathrm{rel}(\N) \leq 1$. This observation implies that 
\[ \mathrm{rank}(\N) + \mathrm{rel}(\N) = \mathrm{rank}(\M) + \mathrm{rel}(\M) \]
This expression implies that 
\[ \dim_\C \N = 2 \cdot \mathrm{rank}(\N) + \mathrm{rel}(\N) \leq 2 \cdot \mathrm{rank}(\M) + \mathrm{rel}(\M) = \dim_\C \M \]
The degeneration theorem of Mirzakhani-Wright states that the dimension of $\N$ is strictly less than the rank of $\M$ so we see that the rank of $\N$ is strictly less than the rank of $\M$. Since both $(X, \omega)$ and $(Z, \zeta)$ have the same dimensional twist space we also have that $\mathrm{rel}(\N) = 1$. Since the twist space on $(X, \omega)$ is maximal dimensional we have that $\mathrm{rel}(\M) = 0$. Finally since $\N$ is odd dimensional and since the twist space and cylinder preserving space coincide on $(Z, \zeta)$ - since the twist space is maximal dimensional - Theorem~\ref{T:Odd1} and Theorem~\ref{T:Odd2} implies that any two equivalent cylinders that persist on $(Z, \zeta)$ are not adjacent and have identical heights.

%
%

\section{Rank Two Rel Zero Orbit Closures}\label{S:dim4}

Let $\M$ be a rank two rel zero affine invariant submanifold. The goal of this section will be to show that if $\M$ is contained in a hyperelliptic component of a stratum of abelian differentials then it is a branched covering construction of $\mathcal{H}(2)$. There are many reasons to single out this case. First, this case is the basis of our induction argument. Second, the proof that $\M$ is a branched covering is almost identical to the general case, but with fewer technical problems (so it makes the main ideas of the proof more transparent). Finally, the proof relies on a lemma, which we call the ``Prototype Lemma", that has found application in several forthcoming results in flat geometry. Alex Wright suggested the formulation and proof of the Prototype Lemma. 

\begin{sslem}\label{prototype-lemma}
Let $(X, \omega)$ be a translation surface in an rank two rel zero affine invariant submanifold $\M$ that belongs to any component of any stratum of abelian differentials. Suppose that $(X, \omega)$ contains two non-intersecting $\M$-equivalence classes of cylinders $\cC_1$ and $\cC_2$. If $\cC_1$ contains a saddle connection parallel to the core curves of the cylinders in $\cC_2$, then $(X, \omega)$ is periodic in that direction.
\end{sslem}
\begin{proof}
Since $\M$ is rank two rel zero if there are two distinct equivalence classes of parallel cylinders on $(X, \omega)$ in the $v$-direction, then $\M$ is periodic in the $v$-direction. Therefore, since the statement is immediate when $\cC_1$ and $\cC_2$ are parallel, let's assume that they are not parallel. After rotating and shearing we may assume without loss of generality that $\cC_1$ is a collection of horizontal cylinders and that $\cC_2$ is a collection of vertical ones. The condition that $\cC_1$ contains a saddle connection with period $v_2$ now becomes that $\cC_1$ contains a vertical saddle connection. Let $(X_\infty, \omega_\infty)$ be the boundary translation surface formed by collapsing $\cC_1$. 

By the degeneration theorem of Mirzakhani-Wright~\cite{MirWri} (Theorem~\ref{MirWri} in this paper, Theorem 2.7 in \cite{MirWri}), if $\cC_1$ is collapsed and $(Y, \eta)$ is any component of the boundary translation surface $(X_\infty, \omega_\infty)$, then the orbit closure of $(Y, \eta)$ has complex dimension at most three. In particular, each component of the boundary translation surface is completely periodic. A completely periodic translation surface is characterized by the property that if there is one cylinder in a given direction, then that direction is periodic. Therefore, if we can show that some cylinder from $\cC_2$ appears on each component of the boundary translation surface then we may conclude that every component of the boundary translation surface is vertically periodic. In particular, since $(X_\infty, \omega_\infty)$ was formed by vertically collapsing a collection of cylinders, each component of $(X_\infty, \omega_\infty)$ is vertically periodic if and only if  $(X, \omega)$ is vertically periodic. To summarize, it suffices to show that a cylinder from $\cC_2$ appears on each component of $(X_\infty, \omega_\infty)$. 

Let $C$ be a connected component of the translation surface with boundary formed from $(X, \omega)$ once $\cC_1$ and $\cC_2$ are removed. It suffices to show that each region $C$ borders a cylinders in $\cC_2$. Suppose to a contradiction that $C$ is such a region that does not border a cylinder in $\cC_2$. It follows that the region $C$ borders a horizontal cylinder $D$ in $\cC_1$. Applying the cylinder deformation theorem, we may vertically shear the cylinders in $\cC_2$ so that it contains a horizontal saddle connection while fixing the rest of the translation surface. Vertically shearing the cylinders in $\cC_2$ does not alter the fact that the region $C$ is not adjacent to a cylinder in $\cC_2$. Let $(Z, \zeta)$ be the boundary translation surface formed by collapsing $\cC_2$.

Since $C$ does not border a vertical cylinder in $\cC_2$ it persists isometrically on the boundary translation surface $(Z,\zeta)$. Since it is adjacent to the horizontal cylinder $D$, the region $C$ and the cylinder $D$ remain adjacent on $(Z,\zeta)$ and hence belong to the same component of the boundary translation surface. Since each component of the boundary translation surface is completely periodic and since the component containing the region $C$ also contains the horizontal cylinder $D$, it follows that $C$ is a union of horizontal cylinders. 

Since $C$ did not border a cylinder in $\cC_2$, it was unaffected by the degeneration and hence the region $C$ on $(X, \omega)$ is also covered by horizontal cylinders. By assumption, since $C$ is in the complement of the cylinders in $\cC_1$, the region $C$ is covered by horizontal cylinders that are inequivalent to the horizontal cylinders in $\cC_1$. Moreover, none of the horizontal cylinders in the region $C$ intersect cylinders in $\cC_2$. Therefore, $(X, \omega)$ contains two equivalence classes of horizontal cylinders, neither of which intersects cylinders in $\cC_2$. However, since $\M$ is rank two, if $(X, \omega)$ contains two equivalence classes of horizontal cylinders then these two equivalence classes cover all of $(X, \omega)$ and so some horizontal cylinder must intersect a cylinder in $\cC_2$, which is a contradiction. 
\end{proof} 

\begin{sslem}[Prototype Lemma]\label{prototype}
Any rank two rel zero affine invariant submanifold $\M$ in any component of any stratum contains a translation surface that has exactly two horizontal and two vertical $\M$-equivalence classes of cylinders, so that one of the horizontal $\M$-equivalence classes does not intersect one of the vertical $\M$-equivalence classes.
\end{sslem}
\begin{proof}
Let $\M$ be a rank two rel zero affine invariant submanifold. As described in the cylinder deformation section (Section~\ref{CD} Theorem~\ref{TP}), by Wright Lemma 8.6 \cite{Wcyl} there is a horizontally periodic translation surface $(X, \omega)$ in $\M$ on which the twist space and the cylinder preserving space coincide. Since we have assumed that $\M$ has rank two and no rel, this implies that $(X, \omega)$ has exactly two equivalence classes of horizontal cylinders. Call these equivalence classes $\cC_1$ and $\cC_2$ and let $\sigma_1$ and $\sigma_2$ be the two corresponding standard shears (defined immediately before the statement of Theorem~\ref{TP}). The absence of rel implies that the projections to absolute cohomology of $\sigma_1$ and $\sigma_2$ span a two dimensional subspace. 

The perturbation theorem (Theorem~\ref{perturb} in this paper; Lemma 5.5 in Mirzakhani-Wright~\cite{MirWri}) states that we can find a path in $\M$ that deforms $(X, \omega)$ to a new translation surface $(Y, \eta)$ so that along the path (1) no cylinder in $\cC_1$ or $\cC_2$ vanishes, (2) if $C_1$ is a cylinder equivalent to a cylinder in $\cC_1$ and $C_2$ is cylinder equivalent to a cylinder in $\cC_2$ then $C_1$ and $C_2$ are disjoint, and (3) on $(Y, \eta)$ the cylinders in $\cC_1$ remain horizontal and the cylinders in $\cC_2$ are vertical. After applying the cylinder deformation theorem we may ensure, perhaps after horizontally shearing the cylinders in $\cC_1$ and vertically shearing the cylinders in $\cC_2$, that $\cC_1$ contains a vertical saddle connection and that $\cC_1$ contains a horizontal saddle connection. By Lemma~\ref{prototype-lemma}, the new translation surface is vertically and horizontally periodic and contains a equivalence class of vertical cylinders $\cC_2$ that does not intersect a horizontal equivalence class of cylinders $\cC_1$. Since the translation surface is horizontally and vertically periodic and contains a non-intersecting horizontal and vertical equivalence class of cylinders, it follows that there are at least two horizontal and two vertical equivalence classes. Since $\M$ is rank two rel zero, there are no more than two equivalence classes of cylinders in any given direction and so there are exactly two horizontal and two vertical equivalence classes of cylinders.
 \end{proof}
 
Given a rank two rel zero affine invariant submanifold $\M$, we say that $(X, \omega)$ is a prototype translation surface if $(X, \omega)$ belongs to $\M$, is vertically and horizontally periodic, has a non-intersecting horizontal and vertical equivalence class of cylinders. Given a horizontally periodic translation surface with Lindsey tree $\Gamma$ in a hyperelliptic component of a stratum, we will say that two edges or half-edges of $\Gamma$ are $\M$-equivalent if they connect the same two $\M$-equivalence classes of horizontal cylinders. Half-edges will be understood to connect an $\M$-equivalence class to itself.

From now on we assume that $\M$ is an affine invariant submanifold in a hyperelliptic component of a stratum of abelian differentials and that it is rank two rel zero. Let $(X, \omega)$ be a prototype surface on $\M$ with horizontal equivalence classes $\cC_1$ and $\cC_2$ and vertical equivalence classes $\cV_1$ and $\cV_2$. We assume without loss of generality that $\cC_1$ and $\cV_2$ do not intersect and that moreover they contain a vertical and horizontal saddle connection respectively. Finally, let $\cS_1$ be the equivalence class of horizontal saddle connections that connect a cylinder in $\cC_1$ to a cylinder in $\cC_2$. Let $\cS_1$ be the equivalence class of horizontal saddle connections that connect a cylinder in $\cC_2$ to a cylinder in $\cC_2$. 

\begin{sslem}\label{L:bcch2} 
If $(X, \omega)$ is a prototype surface in $\M$ described above then
\begin{enumerate}
\item Equivalent horizontal cylinders have identical heights.
\item Equivalent horizontal saddle connections have identical lengths.
\item The saddle connections on the boundary of $\cC_2$ alternate between $\cS_1$ and $\cS_2$.
\end{enumerate}
\end{sslem}
\begin{proof}
Given a prototype surface $(X, \omega)$ we can degenerate the surface by collapsing either $\cC_1$ or $\cV_2$. The vertical collapse lemma (Lemma~\ref{collapse0}) implies that the resulting boundary translation surface is a disjoint union of translation surfaces in hyperelliptic components of strata. Let $(X_\infty, \omega_\infty)$ be the boundary translation surface formed by collapsing $\cC_1$. 

Let $(Y, \eta)$ be a component of $(X_\infty, \omega_\infty)$. By the degeneration theorem of Mirzakhani-Wright~\cite{MirWri} (Theorem~\ref{MirWri} in this paper, Theorem 2.7 in \cite{MirWri}), the orbit closure $\N$ of $(Y, \eta)$ is at most three complex-dimensional. Since cylinders from both $\cV_1$ and $\cV_2$ persist on $(Y, \eta)$, the standard shears $\sigma_{\cV_1}$ and $\sigma_{\cV_2}$ are both tangent to $\N$ at $(Y, \eta)$. Since $\N$ is rank one and since the two standard shears are not constant multiples of each other and pair to zero under the cup product pairing, it follows that $\N$ must have nonzero rel. Since $\N$ is at most three complex-dimensional, it follows that $\N$ is rank one rel one. In particular, some nonzero linear combination of $\sigma_{\cV_1}$ and $\sigma_{\cV_2}$ must be rel on $(Y, \eta)$. 

To summarize, any component of $(X_\infty, \omega_\infty)$ has rank one rel one orbit closure and contains $\sigma_{\cV_1}$ and $\sigma_{\cV_2}$ where $\cV_1$ and $\cV_2$ are disjoint collections of vertical cylinders, Since the twist space of a translation surface in a rank one rel one orbit closure is at most two dimensional, it follows that these two tangent vectors span the twist space. We are therefore exactly in the situation described in Theorem~\ref{T:Odd2}. By Theorem~\ref{T:Odd2}, no two cylinders in $\cV_1$ and $\cV_2$ are adjacent on $(X_\infty, \omega_\infty)$ and any two cylinders that both belonged to the same $\cV_i$ for $i = 1$ or $2$ have identical heights. 

Since no two equivalent vertical cylinders can be adjacent in $\cC_2$, claim 3 follows. Moreover, for any horizontal saddle connection on the boundary of a cylinder in $\cC_2$ there is exactly one vertical cylinder that passes through it (otherwise two equivalent cylinders would be adjacent in $\cC_2$). Therefore, the length of each saddle connection in $\cS_i$ is also the height of a cylinder in $\cV_i$ for $i = 1$ and $2$. Claim 2 will follow from claim 1 by symmetry of hypotheses. It remains to establish claim 1, i.e. that equivalent horizontal cylinders have identical heights. 

We begin by showing that any two cylinders in $\cC_1$ have identical heights. We have already shown that if we degenerate $\cV_2$ and two cylinders from $\cC_1$ end up on the same component of the boundary translation surface, then those two cylinders have identical heights. A path $\gamma$ in $X - Z(\omega)$ is called a staircase path if it is piecewise linear and each linear piece is either vertical or horizontal. Let $C$ and $D$ be horizontal cylinders in $\cC_1$. For any staircase path $\gamma$ between $C$ and $D$ it is possible to produce a collection of cylinders $\{ C_i \}_{i=0}^n$ in $\cC_1$ so that $C_0 = C$, $C_n = D$ and so that there are staircase paths $\gamma_i$ from $C_{i-1}$ to $C_i$ that pass through at most one cylinder in $\cV_2$.

The algorithm to produce this new collection of staircase paths is straightforward. Follow the staircase path $\gamma$ until it has entered and exited a cylinder in $\cV_2$. Since no two cylinders in $\cV_2$ are adjacent, before entering and after exiting a cylinder in $\cC_2$, it will be in a cylinder contained in $\cV_1$. Modify $\gamma$ with a vertical line so that once it exits $\cV_2$ it travels vertically and enters a cylinder in $\cC_1$. Now repeat the procedure.

Since any two cylinders may be joined by a staircase path it suffices to show that if $C$ and $D$ are cylinders in $\cC_1$ that are joined by a staircase path $\gamma$ that passes through exactly one cylinder $V$ in $\cV_2$ that $C$ and $D$ have identical heights. Let $s_1$ be the vertical saddle connection on the lefthand boundary of $V$ that $\gamma$ passes through and $s_2$ the one on the righthand boundary. By the cylinder deformation theorem, it is possible to vertically shear $\cV_2$ while fixing the rest of the translation surface so that $s_1$ and $s_2$ are connected by a horizontal line contained in $V$ and so that $\cV_2$ contains some horizontal saddle connection. Collapsing $\cV_2$ ensures that $C$ and $D$ land on the same component of the boundary translation surface and hence, as argued above, that they have identical heights. This establishes that all cylinders in $\cC_1$ have identical heights as desired.

It remains to show that any two cylinders in $\cC_2$ have identical heights. Let $C$ be the cylinder in $\cC_2$ that has smallest height. Let $D$ be any cylinder in $\cC_1$ adjacent to $C$. By the standard position lemma, it is possible to put $C$ and $D$ into standard position. If $V$ is the resulting cylinder then every cylinder equivalent to $V$ must spend the same percent of time in $\cC_2$ as $V$ does by the cylinder proportion theorem. Since every cylinder in $\cC_1$ has identical height and no two are adjacent, it follows that if $V'$ is a vertical cylinder equivalent to $V$ then every cylinder in $\cC_2$ that $V'$ intersects has the same height as $C$. By the cylinder proportion theorem, every cylinder in $\cC_2$ is intersected by a vertical cylinder equivalent to $V$ and so every cylinder in $\cC_2$ has the same height as cylinder $C$.
 \end{proof}
 
We will now rephrase Lemma~\ref{L:bcch2} in a form more obviously connected to branched covering constructions. First, we make a definition.  Suppose that $\{C_1, \hdots, C_n\}$ is a collection of cylinders on a flat surface. We will take the cylinders to be marked at points on their boundary corresponding to cone points of the flat metric. We will say that the cylinders are mutually isogenous if there is a cylinder $C$ with marked boundary and a local isometry $f_i: C_i \ra C$ for each $1 \leq i \leq n$ so that the preimage of the marked points on the boundary of $C$ under $f_i$ are exactly the marked points on the boundary of $C_i$. The upshot of Lemma~\ref{L:bcch2} is the following

\begin{sscoro}
Let $(X, \omega)$ be the prototype surface in $\M$ as in Lemma~\ref{L:bcch2}. Let $\ell_i$ be the length of the saddle connections in $\cS_i$ and let $h_i$ be the heights of the cylinders in $\cC_i$ for $i = 1, 2$. Each cylinder in $\cC_1$ is isogenous to the cylinder in  Figure~\ref{fig:IsogenyH}.
\begin{figure}[H]
\centering
\begin{tikzpicture}
	\draw (0,0) -- (1,0) -- (1,1) -- (0,1) -- (0,0);
	\node at (-.5, .5) {$h_1$}; \node at (.5,1.5) {$\ell_1$};
\end{tikzpicture}
\caption{The cylinder to which all cylinders in $\cC_1$ are isogenous}
\label{fig:IsogenyH}
\end{figure}
\noindent and every cylinder in $\cC_2$ is isogenous to the cylinder in Figure~\ref{fig:IsogenyH'}
\begin{figure}[H]
\centering
\begin{tikzpicture}
	\draw (0,0) -- (2,0) -- (2,1) -- (0,1) -- (0,0);
	\draw[dotted] (1,0) -- (1,1);
	\node at (-.5, .5) {$h_2$}; \node at (.5,1.5) {$\ell_1$}; \node at (1.5, 1.5) {$\ell_2$};
\end{tikzpicture}
\caption{The cylinder to which all cylinders in $\cC_2$ are isogenous}
\label{fig:IsogenyH'}
\end{figure}
\noindent where opposite sides are identified, the labels correspond to lengths, and all angles are right angles.
\end{sscoro}
\begin{proof}
If $C$ is a cylinder in $\cC_1$ then it has height $h_1$ and every saddle connection on the boundary has length $\ell_1$ by Lemma~\ref{L:bcch2}. Since every vertical cylinder that passes through $C$ crosses through a saddle connection exactly once and must fully contain any saddle connection it passes through, it follows that every saddle connection on one boundary of $C$ perfectly vertically aligns with a saddle connection on the other boundary. It follows that $C$ is isogenous to the cylinder in Figure~\ref{fig:IsogenyH}. The proof for cylinders in $\cC_2$ is essentially identical and so we omit it.
\end{proof}

\begin{ssthm}\label{base2}
If $\M$ is a rank two rel zero affine invariant submanifold in a hyperelliptic component of a stratum then it is a branched covering construction of $\strata(2)$.
\end{ssthm}
\begin{proof}
Let $(X, \omega)$ be a prototype surface in $\M$ as in Lemma~\ref{L:bcch2}. We will now construct the surface that $(X, \omega)$ is a translation covering of. Let $(Y, \eta)$ be the translation surface in Figure~\ref{fig:CoveredSurface}.
\begin{figure}[H]
\centering
\begin{tikzpicture}
	\draw (0,0) -- (0,2) -- (1,2) -- (1,1) -- (2,1) -- (2,0) -- (0,0);
	\draw[dotted] (1,0) -- (1,1);
 	\draw[dotted] (0,1) -- (1,1);
	\node at (-.5, .5) {$h_2$}; \node at (.5,2.5) {$\ell_1$};
	\node at (-.5, 1.5) {$h_1$}; \node at (1.5,1.5) {$\ell_2$};
\end{tikzpicture}
\caption{The translation surface that $(X, \omega)$ covers in $\strata(2)$}
\label{fig:CoveredSurface}
\end{figure}
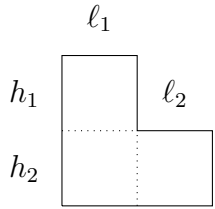
\noindent In Figure~\ref{fig:CoveredSurface} opposite sides are identified, the labels correspond to the lengths of the saddle connections, and all angles are right angles. For each horizontal cylinder on $(X, \omega)$ there is a local isometry that takes it to either the top cylinder, if the horizontal cylinder belonged to $\cC_1$, or the bottom cylinder, if it belonged to $\cC_2$. The maps on the horizontal cylinders of $(X, \omega)$ agree whenever two cylinders share a boundary and so the local isometries glue together to form a map $f: (X, \omega) \ra (Y, \eta)$ that is a translation covering. 

By Corollary~\ref{useful} to show that $\M$ is a branched covering construction of $\mathcal{H}(2)$ it suffices to show that we can find a generic prototype surface. By the cylinder deformation theorem we may suppose without loss of generality that $\ell_1 = \ell_2 = h_1 = 1$ and $h_2 = a$ where $a$ is any transcendental number. Since the moduli of the cylinders in $\cC_1$ and those in $\cC_2$ are not rational multiples of each other, the orbit closure of $(X, \omega)$ contains the standard shears on $\cC_1$ and $\cC_2$. By Avila, Eskin, M\"oller~\cite{AEM}, the projection to absolute cohomology of the tangent space of any orbit closure is complex symplectic. Since the tangent space of the orbit closure of $(X, \omega)$ contains a two dimensional complex isotropic subspace, it must have complex dimension at least four and hence coincide with the tangent space to $\M$ at $(X, \omega)$. Therefore, $(X, \omega)$ is generic under the action of $\GL_2(\R)$ and so $\M$ is a branched covering construction of $\mathcal{H}(2)$ as desired.
 \end{proof}
 
 \begin{rem}
 Remember the standing assumption that we are really working with 
 \end{rem}

%
%

\section{The Flat Geometry of Translation Surfaces in Higher Rank Affine Invariant Submanifolds}\label{EDOC}

In this section, we begin the inductive argument. Let $\M$ be a higher rank affine invariant submanifold in a hyperelliptic component of a stratum of abelian differentials. Assume throughout this section that any higher rank affine invariant submanifold in a hyperelliptic components that has dimension strictly smaller than $\M$ is a branched covering construction of a hyperelliptic component of a stratum. Under these hypotheses, the main theorem of the section is the following:

\begin{ssthm}\label{T:Flat-Geometry1}
If $(X, \omega)$ is a horizontally periodic translation surface in $\M$ with twist space and cylinder preserving space coinciding then the following hold:
\begin{enumerate}
\item Any two $\M$-equivalent horizontal cylinders in $(X, \omega)$ have identical heights.
\item If $\M$ is even complex-dimensional then there is exactly one self-adjacent equivalence class of horizontal cylinders.
\item If $s$ is a saddle connection on the boundary of two equivalent horizontal cylinders, then there is a cylinder that contains $s$, intersects it exactly once, and is contained in the equivalence class of horizontal cylinders. 
\end{enumerate}
\end{ssthm}

\noindent By Theorem~\ref{T:Odd1} this result holds for odd-dimensional $\M$. Therefore, throughout this section we will suppose that $\M$ is even-dimensional. Since we have already established the main theorem for rank two affine invariant submanifolds that are even-dimensional, we will suppose furthermore that the rank $r$ of $\M$ is at least three. Suppose finally that $(X, \omega)$ is a horizontally periodic translation surface with twist space and cylinder preserving space coinciding. 

We begin by finding an equivalence class of cylinders that we will use to degenerate $(X, \omega)$ to the boundary of $\M$. 

\begin{sslem}[Leaf Lemma]\label{L:leaf}
There is a horizontal cylinder that is only adjacent to one inequivalent cylinder and perhaps also itself.
\end{sslem}
\begin{proof}
Let $\Gamma$ be the Lindsey half-tree of $(X, \omega)$. Enumerate the equivalence classes of horizontal cylinders $\{1, \hdots, m\}$ and color the vertices of the tree by the corresponding equivalence class. Let $\Gamma'$ be the quotient of $\Gamma$ where each monochromatic connected subtree is collapsed to a single point (colored with the same color as the subtree). Let $\lambda$ be a leaf of the quotient graph $\Gamma'$, i.e. a vertex that connects to at most one other vertex in $\Gamma'$. Let $T$ be the monochromatic connected subtree corresponding to $\lambda$. Since $\lambda$ was a leaf in $\Gamma'$ there is a single vertex $w$ in $T$ so that $T$ is connected to the rest of $\Gamma$ by an edge joining $w$ to a vertex $v$ of a different color. 

Suppose to a contradiction that $T$ contains more vertices than just $w$. Let $C_w$ and $C_v$ be the cylinders in $(X, \omega)$ corresponding to $w$ and $v$ respectively. Let $C$ be the cylinder in $T$ that is different from $C_w$. By the standard position lemma it is possible to shear the equivalence classes containing $C_w$ and $C_v$ so that there is a vertical cylinder $V$ that is contained in $C_w \cup C_v$ and that contains the two saddle connections $s$ and $s'$ that connect $C_w$ to $C_v$. By the cylinder proportion theorem, there is a cylinder $V'$ that is equivalent to $V$ and that passes through $C$. Since the monochromatic tree $T$ is connected to the rest of $\Gamma$ through $w$ and since $V$ contains the saddle connections connecting $C_v$ to $C_w$ it follows that $V'$ must be contained in $T$ (since it cannot escape into the rest of $\Gamma$ through $s$ and $s'$). By the cylinder proportion theorem, since $V'$ is contained entirely in one equivalence class so is $V$. This contradicts the assumption that $V$ intersects both $C_w$ and $C_v$, which are inequivalent cylinders. Therefore, $T$ contains a single vertex $w$ and $w$ is only adjacent to one inequivalent cylinder $v$ and perhaps also itself. 
 \end{proof}
 
We now fix notation that we will use for the remainder of the section. Let $L$ be a horizontal leaf cylinder on $(X, \omega)$ and let $\cC_0$ be the equivalence class of horizontal cylinders that it belongs to. Suppose that the only distinct cylinder that $L$ is adjacent to is $L'$, which belongs to the equivalence class $\cC_1$. Suppose furthermore, using the standard position lemma, that $L$ and $L'$ are in standard position and that $W$ is the resulting vertical cylinder that passes between them. Let $\cW$ be the equivalence class of vertical cylinders that contains $W$.

By the vertical collapse lemma (Lemma~\ref{collapse0}), collapsing $\W$ results in a translation surface $(Y, \eta)$ that is a disjoint union of translation surfaces in hyperelliptic components of strata. Since collapsing $\W$ is a path in $\M$, the surface $(Y, \eta)$ belongs to the Mirzakhani-Wright partial compactification of $\M$. 

\begin{sslem}\label{L:N-dimension}
If $(Z, \zeta)$ is a component of $(Y, \eta)$ that contains a cylinder from $\cC_1$ and $\N$ is the affine invariant submanifold in the boundary of $\M$ that contains $(Z, \zeta)$ then the following hold:
\begin{enumerate}
\item $\N$ is either $2r-3$ or $2r-2$ complex-dimensional 
\item The twist space and cylinder preserving space on $(Z, \zeta)$ coincide. 
\end{enumerate}
\end{sslem}
\begin{proof}
Suppose that $(Z, \zeta)$ is a component of $(Y, \eta)$ that contains a cylinder that belonged to $\cC_1$. Suppose that $\cC$ is any equivalence class of horizontal cylinders excluding $\cC_0$. We will show that $(Z, \zeta)$ contains a cylinder belonging to $\cC$ by inducting on the distance $d$ from $\cC$ to $\cC_1$ in the Lindsey tree. The $d = 0$ base case, i.e. that $(Z, \zeta)$ contains a cylinder that belonged to $\cC_1$ holds by assumption. 

Now suppose that $\cC$ is distance $d > 0$ away from $\cC_1$ in the Lindsey tree and that a cylinder from any equivalence class that is distance less than $d$ away from $\cC$ appears on each component of $(Y, \eta)$. Let $C$ be a cylinder in $\cC$ that is adjacent to a cylinder $D$ that is distance $d-1$ away from $\cC_1$ in the Lindsey tree. Let $\cC'$ be the equivalence class of cylinders containing $D$. The standard position lemma implies that we may apply a standard shear to $\cC$ to put $C$ and $D$ in transverse standard position. In particular, there is a cylinder $V$ that is contained in $C \cup D$ and intersects the saddle connection $s'$ on their boundary exactly once. Let $\V$ be the equivalence class of cylinders containing $V$. By the cylinder proportion theorem, any cylinder in $\V$ must be contained exclusively in cylinders contained in $\cC$ or $\cC'$. In particular, the cylinders do not pass through saddle connections connecting $\cC_0$ to $\cC_1$, which are precisely the ones altered in the collapse. Therefore, all of the cylinders in $\V$ persist on $(Y, \eta)$. By the cylinder proportion theorem, every cylinder in $\cC$ and $\cC'$ are intersected by a cylinder in $\V$. Since there is a cylinder in $\cC'$ on $(Z, \zeta)$ by the induction hypothesis, it follows that there is a cylinder in $\V$ on $(Z, \zeta)$ as well. Therefore, there is a cylinder equivalent to $\cC$ on $(Z, \zeta)$ as desired.

The degeneration theorem of Mirzakhani-Wright~\cite{MirWri} (Theorem~\ref{MirWri} in this paper, Theorem 2.7 in \cite{MirWri}) implies that the rank of $\N$ is strictly less than $r$. Since a cylinder from all but one horizontal equivalence class persists on $(Z, \zeta)$ it follows from the twist space degeneration lemma (Lemma~\ref{L:tsdl} part 1) that the dimension of the twist space of $(Z, \zeta)$ is $r-1$. The dimension of the twist space is at most $\mathrm{rank}(\N) + \mathrm{rel}(\N)$. Since $\N$ is contained in a hyperelliptic component, $\mathrm{rel}(\N) \leq 1$ and so $\N$ is either rank $r-1$ or rank $r-2$ and rel $1$. Since the complex-dimension of an affine invariant submanifold $\N$ is $2\cdot \mathrm{rank}(\N) + \mathrm{rel}(\N)$ we see that $\N$ has complex dimension at least $2r-3$. 

We will now argue that $\N$ has dimension at most $2r-2$. Let $C$ be a cylinder in $\cC_1$ that belongs to $(Z, \zeta)$. Let $(Y_1, \eta_1)$ be the surface formed by collapsing $\cC_0$. While we cannot apply the horizontal collapse lemma to $(Y_1, \eta_1)$ - since we have not verified that $\cC_0$ is not self-adjacent - we can still apply the degeneration theorem. Let $(Z_1, \eta_1)$ be the component of $\cC_0$ that contains $C$ and let $\N_1$ be the affine invariant submanifold in the boundary of $\M$ that contains $(Z_1, \zeta_1)$. Since the cylinders in $\W$ continue to persist on $(Z_1, \zeta_1)$ we may collapse them to pass to a translation surface $(Y_2, \eta_2)$. However, $(Y_2, \eta_2)$ could have been produced in a single degeneration by degenerating $\W$. In particular, $(Y_2, \eta_2)$ is a union of components of $(Y, \eta)$. Since the cylinder $C$ persists on $(Y_2, \eta_2)$ one of those components that is contained in $(Y_2, \eta_2)$ is $(Z, \zeta)$. 

This implies that $\N_1$ is in the boundary of $\N$ and that $\N$ is in the boundary of $\N_1$.  By the degeneration theorem of Mirzakhani-Wright~\cite{MirWri}, the boundary of an affine invariant submanifold has dimension that is strictly smaller than that of the affine invariant submanifold. This implies that $\N$ has dimension at most $2r-2$.  Since the dimension of the twist space is maximal dimensional on $(Z, \zeta)$ it follows that the twist space and cylinder preserving space coincide. 
\end{proof}

The upper bound on the size of the dimension of $\N$ in the proof Lemma~\ref{L:N-dimension} came from realizing the degeneration as two successive degenerations. In the following, lemma we will exploit this idea again. We will show that if $\cC_0$ contains and is distinct from an equivalence class of cylinders $\cV$, then we can realize the degeneration from $(X, \omega)$ to $(Y, \eta)$ as three successive degenerations. This will force the dimension of any component of $(Y ,\eta)$ to be at most $2r-3$ complex-dimensional. 

\begin{sslem}\label{L:baby-equivalence}
Suppose that there is an equivalence class of cylinders $\cV$ that is contained in and distinct from $\cC_0$, then if $(Z, \zeta)$ is any component of $(Y, \eta)$ that contains a cylinder from $\cC_1$ it has orbit closure of dimension $2r-3$. In particular, $\cC_0$ is the only self-adjacent equivalence class of horizontal cylinders on $(X, \omega)$.
\end{sslem}
\begin{proof}
Fix a component $(Z, \zeta)$ of $(Y, \eta)$ that contains a cylinder $C$ that was previously part of $\cC_1$ on $(X, \omega)$. We arrived at $(Z, \zeta)$ from a single degeneration, namely degenerating $\W$, but now we will show that we can also arrive at $(Z, \zeta)$ through three successive degenerations. 

Let $(Y_1, \eta_1)$ be the translation surface that results from collapsing $\cV$. By the vertical collapse lemma, $(Y_1, \eta_1)$ belongs to a disjoint union of translation surfaces in hyperelliptic components of strata. Let $(Z_1, \zeta_1)$ be that component that contains the cylinder $C$ that was previously in $\cC_1$. Let $\N_1$ be the affine invariant submanifold in the boundary of $\M$ that contains $(Z_1, \zeta_1)$. 

Since cylinders from $\cC_0$ appear on $(Z_1, \zeta_1)$ we may collapse them. Let $(Y_2, \eta_2)$ be the resulting translation surface. Let $(Z_2, \zeta_2)$ be the component of $(Y_2, \eta_2)$ that contains $C$ and let $\N_2$ be the affine invariant submanifold containing $(Z_2, \zeta_2)$ that is contained in the boundary of $\N_1$. 

Finally, cylinders from $\W$ persist on $(Z_2, \zeta_2)$ and we may collapse them. Let $(Y_3, \eta_3)$ be the resulting translator surface; let $(Z_3, \zeta_3)$ be a component of $(Y_3, \eta_3)$ containing $C$ and let $\N_3$ be the orbit closure of $(Z_3, \zeta_3)$. 

Observe that the three successive degenerations that we constructed to produce $(Z_3, \zeta_3)$ produce the same translation surface as produced if $\W$ is collapsed. Since both $(Z_3, \zeta_3)$ and $(Z, \zeta)$ contain the image of the cylinder $C$, it follows that they are the same translation surface. By the degeneration theorem of Mirzakhani-Wright~\cite{MirWri} (Theorem~\ref{MirWri} in this paper, Theorem 2.7 in \cite{MirWri}) the complex dimension of $\N_i$ is at most $2r-i$. Since the the orbit closure $\N$ of $(Z, \zeta)$ has complex-dimension at least three and since $\N$ corresponds to $\N_3$, which has complex dimension at most $2r-3$ we see that the $\N$ is exactly $2r-3$ complex-dimensional. 

Every component of $(Y, \eta)$ either contains only cylinders that belonged to $\cC_0$ or it contains a cylinder that belonged to $\cC_1$ and has orbit closure of dimension $2r-3$ with twist space and cylinder preserving space coinciding. In the latter case, Theorem~\ref{T:Odd1} and Theorem~\ref{T:Odd2} imply that no two equivalent cylinders on $(X, \omega)$ remain adjacent on a component of $(Y, \eta)$ that contains a cylinder from $\cC_1$. This implies that only cylinders in $\cC_0$ were self-adjacent. 
\end{proof}

Now we are almost in a position to show that $(X, \omega)$ satisfies the final two properties in Theorem~\ref{T:Flat-Geometry1}. The following lemma almost proves two out of three of the properties we must show to prove the main theorem of this section. The ``almost" is needed because we are about to show that $(X, \omega)$ has at most one self-adjacent equivalence class, whereas Theorem~\ref{T:Flat-Geometry1} states that it has exactly one. The final result of the section will be circling around and improving ``at most one" to ``exactly one".

\begin{sslem}~\label{L:half-of-T}
There is at most one horizontal equivalence class on $(X, \omega)$ that is self-adjacent and given any saddle connection $s$ connecting two equivalent horizontal cylinders there is a cylinder $V$ contained in the equivalence class, containing $s$, and intersecting $s$ exactly once.
\end{sslem}
\begin{proof}
Suppose first that $\cC$ is an equivalence class that is not equal to $\cC_0$ or $\cC_1$ that is self-adjacent. Let $s$ be any saddle connection on the boundary of two cylinders in $\cC$. When the equivalence class $\W$ is collapsed to form $(Y, \eta)$ the horizontal cylinders in $\cC$ remain unaltered. Let $(Z, \zeta)$ be the component of $(Y, \eta)$ on which $s$ persists and let $\N$ be the affine invariant submanifold in the boundary of $\M$ that contains $(Z, \zeta)$. By Lemma~\ref{L:N-dimension}, the twist space and cylinder preserving space of $(Z, \zeta)$ coincide. It follows from Theorem~\ref{T:Odd1} and Theorem~\ref{T:Odd2} that $\N$ cannot have odd dimension since it contains two cylinders from $\cC$ that are adjacent. Therefore, $\N$ is higher rank. By the induction hypothesis, there is a cylinder $V$ that is contained in $\cC$, that contains $s$, and that intersects $s$ exactly once.  

Since the collapse of $\W$ fixes the cylinders in $\cC$, it follows that on $(X, \omega)$ the cylinder $V$ persists. Let $\V$ be the cylinders that are equivalent to $V$ on $(X, \omega)$. By the cylinder proportion theorem, each cylinder in $\V$ is contained in the union of cylinders in $\cC$ and each cylinder in $\cC$ intersects a cylinder in $\V$. The components of $(Y, \eta)$ either consist entirely of cylinders from $\cC_0$ or they contain every equivalence class of horizontal cylinder except for $\cC_0$. In the latter case, the component contains a cylinder from $\cC$ and hence a cylinder from $\cV$. This implies that on these components $\cC$ remains self-adjacent. By the induction hypothesis, there is exactly one equivalence class of adjacent cylinders on any such component of $(Y, \eta)$ and so we have shown that the only equivalence classes of cylinders on $(X, \omega)$ that were self-adjacent are $\cC$ and possibly $\cC_0$.


Suppose now to a contradiction, that both $\cC$ and $\cC_0$ are self-adjacent on $(X, \omega)$. Let $(Y_1, \eta_1)$ be the translation surface that results from collapsing $\cV$. By the vertical collapse lemma, $(Y_1, \eta_1)$ is a disjoint union of translation surface in hyperelliptic components of strata. Let $(Z_1, \zeta_1)$ be any component of $(Y_1, \eta_1)$ that contains two adjacent cylinders in $\cC_0$. This component necessarily contains a cylinder from $\cC_0$ and $\cC_1$ and hence its twist space is at least two-dimensional. However, since two cylinders from $\cC_0$ remain adjacent, Theorem~\ref{T:Odd2} implies that the orbit closure cannot be three complex-dimensional. Therefore, the orbit closure of $(Z_1, \zeta_1)$ is higher rank. By the induction hypothesis, it follows that there is a cylinder $V'$ that passes through the saddle connection connecting the two cylinders in $\cC_0$. However, by Lemma~\ref{L:baby-equivalence} this contradicts the claim that $\cC$ is self-adjacent and we have a contradiction. 

We have shown that if $\cC$ is an equivalence class of cylinders that does not coincide with $\cC_0$ or $\cC_1$ and that is self-adjacent, then the conclusion of the lemma holds. Therefore, it suffices to consider the case where the only two equivalence classes of horizontal cylinders that are possibly self-adjacent are $\cC_0$ and $\cC_1$. The arguments that we are about to make are very similar to the preceding arguments and can be skipped by readers who are only looking for the thrust of the argument.

Let $\cC$ be any other equivalence class of cylinders. By assumption, $\cC$ is not self-adjacent. Collapse it and let $(Y_2, \eta_2)$ be the resulting translation surface. By the horizontal collapse lemma (Lemma~\ref{collapse1}), the translation surface $(Y_2, \eta_2)$ is a disjoint union of translation surfaces in hyperelliptic components of strata of abelian differential.  

Suppose first that $\cC_0$ is self-adjacent on $(X, \omega)$ and let $s$ be any saddle connection on the boundary of two cylinders in $\cC_0$. Let $(Z_2, \zeta_2)$ be the component of $(Y_2, \eta_2)$ that contains the saddle connection $s$. Let $\N_2$ be the affine invariant submanifold in the boundary of $\M$ that contains $(Z_2, \zeta_2)$. Since $(Z_2, \zeta_2)$ contains a horizontal cylinder from $\cC_0$ and a horizontal cylinder from $\cC_1$, the twist space is at least two dimensional. The dimension of $\N_2$ cannot be three since then Theorem~\ref{T:Odd2} would imply that no two cylinders in $\cC_0$ could be adjacent. Therefore, $\N_2$ is higher rank and so the induction hypothesis implies that there is a cylinder $V$ that is contained in $\cC_0$, contains $s$, and intersects $s$ exactly once. Lemma~\ref{L:baby-equivalence} implies that $\cC_1$ cannot be self-adjacent on $(X, \omega)$ and so $\cC_0$ is the unique self-adjacent equivalence class.

The final case to consider is the case where only $\cC_1$ is self-adjacent. Let $s$ be a saddle connection on the boundary of two cylinders in $\cC_1$. Shrink the cylinders in $\W$ so that no cylinder has length longer than the saddle connection $s$. Collapse $\cC_0$ to form a translation surface $(Y_3, \eta_3)$. Since we have supposed that $\cC_0$ is not self-adjacent, the horizontal collapse lemma (Lemma~\ref{collapse0}) implies that $(Y_3, \eta_3)$ is a disjoint union of translation surfaces in hyperelliptic components of strata. Let $(Z_3, \zeta_3)$ be the component of $(Y_3, \eta_3)$ that contains $s$. By the induction hypothesis there is a cylinder $V$ that is entirely contained in $\cC_1$, that contains $s$, and that passes through $s$ exactly once. Moreover, since all the horizontal saddle connections in $\W$ are smaller than $s$, it follows that $V$ passes through no horizontal saddle connection contained in $\W$. In particular, $V$ persists as a cylinder on $(X, \omega)$ as desired. 
\end{proof}

It remains for us to show that equivalent horizontal cylinders have identical heights on $(X, \omega)$. The strategy for establishing this claim is revealed in the following result. 

 \begin{sslem}
 Let $\cC_1$ and $\cC_2$ be two adjacent equivalence classes of horizontal cylinders on $(X, \omega)$ and suppose that at least one of them is not self-adjacent. If all cylinders in $\cC_1$ have identical heights, then all cylinders in $\cC_2$ do as well.
 \end{sslem}
 \begin{proof}
 If $\cC_1$ is self-adjacent then let $C_2$ be the tallest cylinder in $\cC_2$ that is adjacent to a cylinder in $\cC_1$. Otherwise, let $C_2$ be the smallest cylinder in $\cC_2$ that is adjacent to a cylinder in $\cC_1$. Let $h_2$ be the height of $C_2$. Since all the cylinders in $\cC_1$ have identical height by assumption, call that height $h_1$. Let $C_1$ be a cylinder in $\cC_1$ that borders $C_2$. 
 
By the standard position lemma, assume that $C_1$ and $C_2$ are in standard position and that $V$ is the vertical cylinder that passes through them. Let $V'$ be any other vertical cylinder equivalent to $V$. Let $n_i$ be the number of times that $V'$ intersects a cylinder in $\cC_i$. If $\cC_1$ is self-adjacent then every time that $V'$ enters a cylinder in $\cC_2$ it must exit the cylinder into a cylinder in $\cC_1$. This implies that $n_2 \leq n_1$. If $\cC_1$ is not self-adjacent then the same reasoning implies that $n_1 \leq n_2$. Let $P'$ be the proportion of the area of $V'$ contained in $\cC_2$. 

If $\cC_1$ is self-adjacent, then $C_2$ was assumed to be the tallest cylinder in $\cC_2$ bordering a cylinder in $\cC_1$ and so
\[ P \leq \frac{n_1 h_2}{ n_1 h_1 + n_1 h_2} = \frac{h_2}{h_1 + h_2} \]
\noindent Otherwise, $C_2$ was assumed to be the smallest cylinder in $\cC_2$ bordering a cylinder in $\cC_1$ and so
\[ P \geq \frac{n_1 h_2}{ n_1 h_1 + n_1 h_2} = \frac{h_2}{h_1 + h_2}   \]
 \noindent By the cylinder proportion theorem, the percent of time that $V'$ spends in $\cC_2$ is identical to the percent of time that $V$ spends in $\cC_2$, which is $\frac{h_2}{h_1+h_2}$. Therefore, $n_1 = n_2$ and the height of every cylinder in $\cC_2$ that $V'$ passes through is $h_2$. By the cylinder proportion theorem, for every cylinder in $\cC_2$ there is a cylinder equivalent to $V$ that passes through it and so all cylinders in $\cC_2$ have height $h_2$ as desired.
 \end{proof}
 
 We will use the lemma predominantly in the following rephrased form:
 
 \begin{sslem}[Identical Heights Lemma]\label{L:IHL}
 If $(X, \omega)$ has exactly one self-adjacent equivalence class of horizontal cylinders, then if any equivalence class of horizontal cylinders has the property that all cylinders in it have identical heights, then all equivalence classes of horizontal cylinders have this property.
 \end{sslem}

We will use the identical heights lemma to establish the first claim of the main theorem of the section (Theorem~\ref{T:Flat-Geometry1}). 

\begin{sslem}
Any two equivalent horizontal cylinders on $(X, \omega)$ have identical heights.
\end{sslem}
\begin{proof}
By the identical heights lemma (Lemma~\ref{L:IHL}) it suffices to show that any two cylinders in $\cC_1$ have identical heights. Let $\cC$ be any equivalence class of cylinders apart from $\cC_0$ and $\cC_1$. If $\cC$ is not self-adjacent then let $(Y_1, \eta_1)$ be the translation surface formed by collapsing $\cC$. If $\cC$ is self-adjacent, then it contains an equivalence class $\V$ of transverse cylinders and let $(Y_1, \eta_1)$ be the translation surface formed by collapsing $\V$.

In the case where $\cC$ is self-adjacent, the vertical collapse lemma (Lemma~\ref{collapse0}) implies that $(Y_1, \eta_1)$ is a disjoint union of translation surfaces in hyperelliptic components of strata. Any component $(Z_1, \eta_1)$ of $(Y_1, \eta_1)$ that contains a cylinder from $\cC_1$ must also contain a cylinder from $\cC_0$ and hence have twist space of dimension at least two. Let $\N_1$ be the orbit closure of $(Z_1, \zeta_1)$. If $\N_1$ is higher rank, then the induction hypothesis implies that any two cylinders in $\cC_1$ that persist on $(Z_1, \zeta_1)$ have identical heights. Otherwise, $\N_1$ is three dimensional and Theorem~\ref{T:Odd2} implies that any two cylinders in $\cC_1$ that persist on $(Z_1, \zeta_1)$ have identical heights.

In the case where $\cC$ is not self-adjacent, the horizontal collapse lemma (Lemma~\ref{collapse1}) implies that $(Y_1, \eta_1)$ is a disjoint union of translation surfaces in hyperelliptic components of strata. Any component $(Z_1, \zeta_1)$ of $(Y_1, \eta_1)$ that contains a cylinder from $\cC_1$ contains a cylinder from $\cC_0$. The preceding argument implies that any cylinder in $\cC_1$ that persist on the same component of $(Y_1, \eta_1)$ have identical  heights. 

Let us rephrase this observation in the language of Lindsey trees. Let $\Gamma$ be the Lindsey tree of $(X, \omega)$. Let $T$ be any connected subtree of $\Gamma$ that only consists of vertices corresponding to cylinders in $\cC_0$ or $\cC_1$. We have shown that any two cylinders in $\cC_1$ that belong to $T$ have identical heights. Suppose that $T_1$ and $T_2$ are connected subtrees of $\Gamma$ that only consist of vertices in $\cC_0$ and $\cC_1$. Suppose too that $T_1$ and $T_2$ are maximal in the sense that they cannot be enlarged by adding another vertex in $\cC_0$ or $\cC_1$. We will say that $T_1$ and $T_2$ are adjacent if there is a path between them that does not pass though a vertex in $\cC_0$ or $\cC_1$. It suffices to show that if $T_1$ and $T_2$ are adjacent then all cylinders in $\cC_1$ in $T_1$ and $T_2$ have identical heights. For any two such trees, there are cylinders in $\cC_1$ - say $C_1$ on $T_1$ and $C_2$ on $T_2$ - that belong to the same component of $(Y, \eta)$  - the translation surface that we formed by collapsing the cylinders $\W$ at the beginning of the section. 

Notice that we already have shown that if two cylinders from $\cC_1$ belong to the same component of $(Y, \eta)$ then they have identical heights. This follows because every component of $(Y, \eta)$ that has a cylinder from $\cC_1$ is a translation surface with twist space and cylinder preserving space coinciding and with an orbit closure of dimension at least $2r-3$. If the orbit closure is exactly three dimensional, then Theorem~\ref{T:Odd2} implies that any two cylinders from $\cC_1$ appearing together on the component have identical heights. Otherwise, the orbit closure of the component is higher rank and the any two cylinders from $\cC_1$ appearing on the component have identical heights by the induction hypothesis. 
\end{proof}

Now we have almost completed the proof of the main theorem of the section (Theorem~\ref{T:Flat-Geometry1}), but as discussed earlier there is one missing ingredient. So far we have shown that on $(X, \omega)$ there is at most one self-adjacent equivalence class. We must show that there is exactly one. 

\begin{sslem}\label{adjacency}
$(X, \omega)$ contains a self-adjacent $\M$-equivalence class of horizontal cylinders. 
\end{sslem}
\begin{proof}
Suppose not to a contradiction. Since, a fortiori, no cylinder can be adjacent to itself the Lindsey tree $\Gamma$ of $(X, \omega)$ has no half-edges and hence is a tree (and not just a half-tree). By Theorem~\ref{reldef}, if $V(\Gamma)$ is the collection of horizontal cylinders on $(X, \omega)$ then 
\[ \eta = \sum_{c \in V(\Gamma)} (-1)^{d(c, c_0)} \gamma_c^* \]
is a relative deformation where $c_0$ is a fixed horizontal cylinder and $d(c,c_0)$ is the distance between cylinders $c$ and $c_0$ in the Lindsey tree. Since $\M$ is an even dimensional affine invariant submanifold in $\hyp(g-1,g-1)$ its tangent space contains no relative deformation. Therefore, we can produce a contradiction if we can show that $\eta$ belongs to the tangent space of $\M$ at $(X, \omega)$. 

First we formulate the problem as a graph theory problem using trees. In the Lindsey tree the vertices correspond to horizontal cylinders. Since cylinders divide into equivalence classes, if there are $m$ equivalence classes of horizontal cylinders we imagine that the Lindsey tree is colored using colors $\{1, \hdots, m\}$ so that a vertex is colored by the equivalence class it belongs to. The assumption that no equivalent cylinders are adjacent means that no two vertices of the same color are adjacent. We will find one more constraint on the Lindsey tree using the cylinder proportion theorem. By the standard position lemma, given two adjacent cylinders $C$ and $D$ we can shear their equivalence classes to put them in standard position and in particular find a cylinder $V$ that intersects $C$ and $D$ and that is contained in $C \cup D$. Let $\cC$ be the equivalence class containing $C$ and $\mathcal{D}$ the one containing $D$. By the cylinder proportion theorem, if $V'$ is a cylinder equivalent to $V$ then $V'$ must intersect at least one cylinder in $\cC$ and at least one cylinder in $\mathcal{D}$ and $V'$ must be contained in the union of cylinders in $\cC$ and $\mathcal{D}$. Since equivalent cylinders are not adjacent, if $V'$ is equivalent to $V$ it must alternate between passing through cylinders in $\cC$ and cylinders in $\mathcal{D}$. The cylinder proportion theorem guarantees that for any cylinder in $\cC$ or $\mathcal{D}$ there is a cylinder $V'$ equivalent to $V$ that intersects the cylinder. In particular, this means that every cylinder in $\cC$ is adjacent to one in $\mathcal{D}$ and vice versa. 

\begin{sublem}
Let $\Gamma$ be a finite tree whose vertices are colored with colors $\{1, \hdots, m\}$. Suppose that
\begin{enumerate}
\item (No self-adjacency) No two vertices of the same color are adjacent.
\item (Cylinder Proportion Theorem)  If $v$ and $w$ are vertices of the same color and $v$ borders a vertex of color $c$ then $w$ does as well. 
\end{enumerate}
Then vertices of the same color are an even distance apart in $\Gamma$.
\end{sublem}
\begin{proof}
Induct on the number of colors $m$. For $m=2$ the result is clear. Suppose now that $m>2$. Let $a$ be a leaf of $\Gamma$ connected to vertex $b$ and suppose that $a$ and $b$ have colors $1$ and $2$ respectively. Since $a$ is a leaf, the only vertex it borders is $b$. The cylinder proportion hypothesis then implies that any vertex of color $1$ can only border a vertex of color $2$. 

Let $v$ and $w$ be two vertices of the same color and let $[v, w]$ be the geodesic between them in $\Gamma$. Let $\Gamma'$ be the colored tree that results from collapsing each maximal connected subtree containing only vertices of color $1$ or $2$ to a point of color $0$. We see that the two properties we assumed about $\Gamma$ - no self-adjacency and the cylinder proportion theorem - descend to $\Gamma'$. Let $[v]$ and $[w]$ be the images of $v$ and $w$ in $\Gamma'$. By the inductive hypothesis, $d([v], [w])$ is even. It suffices to show that $d(v, w)$ is even.

First suppose that $v$ and $w$ are not color 1. Let $Z$ be the collection of points of color $0$ on $\left[ [v], [w] \right]$ and for each $z \in Z$ let $T_z$ be the maximal connected subtree of vertices of color $1$ or $2$ in $\Gamma$ corresponding to $z$. As discussed above, if $v$ and $w$ are not color $1$ then whenever the geodesic $[v, w]$ enters the tree $T_z$ it does so through a vertex of color $2$. Similarly, if the geodesic exits the tree $T_z$ it does so through a vertex of color $2$. It follows that the length $\ell_z$ the geodesic $[v,w]$ travels in the tree $T_z$ is even, since it is a path in $T_z$ between two vertices of color $2$. Since $\ds{ d(v, w) = d([v], [w]) + \sum_{z \in Z} \ell_z }$ is the sum of even numbers it is even.

Now suppose $v$ and $w$ are of color $1$. There are unique vertices of color $2$, call them $v'$ and $w'$, such that $d(v,w) = 2 + d(v', w')$. Since the distance between any two points of color $2$ is even, it follows that the distance between $v$ and $w$ is even as well. 
 \end{proof}
 
Now we will proceed with the proof of Lemma~\ref{adjacency}. Since equivalent horizontal cylinders have identical heights in $(X, \omega)$ by assumption it follows that if $\cC$ is an equivalence class then, up to scaling, the standard shear is $\sigma_{\cC} = \sum_{c \in \cC} \gamma_c^*$. Fix a cylinder $c_0$. By the sublemma, the collection of cylinders that are an even distance away from $c_0$ is a union of equivalence classes $\cC_0, \hdots, \cC_k$ and the collection of cylinders that are an odd distance away from $c_0$ is also a union of equivalence classes $\cC_{k+1}, \hdots, \cC_m$. It follows that the relative deformation $\eta$ can be written as follows: 
 \[ \eta = \sum_{i=1}^k \sigma_{\cC_i} - \sum_{i=k+1}^m \sigma_{\cC_i} \]
\noindent Since each standard shear belongs to the tangent space of $\M$ at $(X, \omega)$ and since $\eta$ is a linear combination of them, it follows that the tangent space also contains the relative deformation $\eta$. This is a contradiction. 
 \end{proof}

%
%

\section{The Isogenous Cylinder Lemma}\label{BCCC}

In Section~\ref{S:dim4} we showed that if $\M$ is an affine invariant submanifold in a hyperelliptic component and $\M$ is four complex-dimensional then $\M$ is a branched covering construction of $\mathcal{H}(2)$. The proof revolved around showing that equivalent cylinders were isogenous. To show that equivalent cylinders were isogenous we needed three results about translation surfaces $(X, \omega)$ with twist space and cylinder preserving space coinciding.
\begin{enumerate}
\item Equivalent horizontal cylinders have identical heights.
\item Equivalent saddle connections have identical lengths.
\item Equivalent saddle connections alternate around the boundary of horizontal cylinders.
\end{enumerate}
\noindent We will now prove these claims for a general affine invariant submanifold $\M$. Throughout this section we will suppose that $\M$ is a higher rank affine invariant submanifold in a hyperelliptic component of a stratum and that $\M$ has complex-dimension at least $5$. Let $(X, \omega)$ be a translation surface in $\M$ that is horizontally periodic and for which the twist space and cylinder preserving space coincide. Suppose furthermore that any higher rank affine invariant submanifold in a hyperelliptic component that has dimension strictly smaller than $\M$ is a branched covering construction. The main result of this section will be that equivalent horizontal cylinders on $(X, \omega)$ are isogenous.

If $S$ is a collection of equivalent saddle connections on $(X, \omega)$ then let $S_t \cdot (X, \omega)$ be the translation surface where all saddle connections in $S$ have been dilated by $t$.

\begin{sslem}\label{good}
If $\M$ is even complex-dimensional then $(X, \omega)$ has exactly one equivalence class of horizontal cylinders, say $\cC$, that is self-adjacent. If $S$ is the collection of horizontal saddle connections connecting two cylinders in $\cC$ then all saddle connections in $S$ have identical lengths and $S_t \cdot (X, \omega)$ belongs to $\M$ for all $t > 0$. 
\end{sslem}
\begin{proof}
By Lemma~\ref{adjacency} there is at least one self-adjacent $\M$-equivalence class $\Cyl$ of horizontal cylinders on $(X, \omega)$. Let $S$ be the collection of horizontal saddle connections that connect two cylinders in $\Cyl$. Suppose that $s$ is the longest saddle connection in $S$ and suppose that it lies on the boundary of cylinders $A$ and $B$ in $\cC$. By Theorem~\ref{T:Flat-Geometry1} there is a cylinder $V$ that is contained in $\cC$, passes through $s$ exactly once, and contains $s$ in its interior. Let $\V$ be the equivalence class of cylinders that contains $V$. Without loss of generality, after perhaps, shearing the surface we may suppose that the cylinders in $\V$ are vertical. Every cylinder in $\V$ has the same height as $V$ and only passes through horizontal saddle connections that lie on the boundary of two cylinders in $\cC$. Since $s$ is the longest saddle connection in $S$, it follows that every cylinder in $\V$ passes exclusively through horizontal saddle connections of length $s$. Let $\cS$ be the collection of horizontal saddle connections through which a cylinder in $\V$ passes. Notice that the standard shear on $\V$ is equivalent to $\cS_t \cdot (X, \omega)$ for some real number $t$

By the vertical collapse lemma (Lemma~\ref{collapse0}), collapsing $\V$ results in a translation surface $(Y, \eta)$ that is a disjoint union of translation surfaces in hyperelliptic components of strata. Since collapsing $\V$ is a path in $\M$, the surface $(Y, \eta)$ belongs to the Mirzakhani-Wright partial compactification of $\M$. 

\begin{sublem}
A cylinder from every $\M$-equivalence class of horizontal cylinders persists on each component of $(Y, \eta)$.
\end{sublem}
\begin{proof}
Let $\cC'$ be an $\M$-equivalence class of horizontal cylinders on $(X, \omega)$. We wish to show that a cylinder from $\cC'$ persists on every component of $(Y, \eta)$. Assume without loss of generality that we have applied $\cS_t$ so that every saddle connection in $\cS$ is shorter than every other horizontal saddle connection on $(X, \omega)$. Proceed by induction on the distance $d$ from $\cC$ to $\cC'$ in the Lindsey tree. The base case, $d = 0$, is immediate since a cylinder from $\cC$ necessarily appears on each component of $(Y, \eta)$. 

Now suppose that $\cC'$ is distance $d > 0$ away from $\cC$ in the Lindsey tree and that a cylinder from any equivalence class that is distance less than $d$ away from $\cC$ appears on each component of $(Y, \eta)$. Let $C$ be a cylinder in $\cC'$ that is adjacent to a cylinder $D$ that belongs to an equivalence class that is distance $d-1$ away from $\cC$ in the Lindsey tree. The standard position lemma implies that we may apply a standard shear to $\cC'$ to put $C$ and $D$ in transverse standard position. In particular, there is a cylinder $W$ that is contained in $C \cup D$ and intersects the saddle connection $s'$ on their boundary exactly once. The horizontal distance across $W$ is the length of $s'$ and every cylinder equivalent to $W$ is the same horizontal distance across. In particular, this means that $W$ cannot intersect any horizontal saddle connection in $\cS$ since all of these are strictly shorter than $s'$. Let $\mathcal{W}$ be the collection of cylinders equivalent to $W$. Since no cylinder in $\mathcal{W}$ passes through a saddle connection in $\cS$, all of the cylinders in $\mathcal{W}$ persist on $(Y, \eta)$. By the cylinder proportion theorem, every cylinder in $\cC'$ and every cylinder equivalent to $D$ is intersected by a cylinder in $\mathcal{W}$. Since there is a cylinder equivalent to $D$ on every component of $(Y, \eta)$ by the induction hypothesis, it follows that there is a cylinder equivalent to $\mathcal{W}$ on every component of $(Y, \eta)$. Therefore, there is a cylinder equivalent to $\cC'$ on every component of $(Y, \eta)$ as desired.
 \end{proof}
 
 Let $(Z, \zeta)$ be any component of $(Y, \eta)$ and let $\N$ be the orbit closure of $(Z, \zeta)$. By assumption a cylinder from every equivalence class of horizontal cylinders persists on $(Z, \zeta)$ and the twist space of $(Z,\zeta)$ contains the image of the standard shears of the horizontal equivalence classes of cylinders on $(X, \omega)$ by the degeneration theorem of Mirzakhani-Wright~\cite{MirWri} (Theorem~\ref{MirWri} in this paper, Theorem 2.7 in \cite{MirWri}). In particular, this implies that the twist space on $(Z, \zeta)$ is at least $r$ dimensional. The dimension of the twist space of a horizontally periodic translation surface in an affine invariant submanifold $\N$ is at most dimension $\mathrm{rank}(\N) + \mathrm{rel}(\N)$. Since $\N$ belongs to a hyperelliptic component, $\mathrm{rel}(\N) \leq 1$. By the degeneration theorem of Mirzakhani-Wright, $\mathrm{rank}(\N) < r$. These observations imply that the twist space is exactly $r$ dimensional and that $\mathrm{rank}(\N) = r-1$ and $\mathrm{rel}(\N) = 1$. 

Since the twist space has maximal dimension, the twist space and cylinder preserving space on $(Z, \zeta)$ coincide. By the twist space degeneration lemma (Lemma~\ref{L:tsdl} part 3), if $\N$ is rank one then $\M$ equivalent horizontal cylinders are not adjacent on $(Z, \zeta)$. If $\N$ is higher rank, then two horizontal cylinders that persist on $(Z, \zeta)$ are $\N$-equivalent if and only if they were $\M$-equivalent on $(X, \omega)$. Theorem~\ref{T:Odd1} then implies that when $\N$ is higher rank, $\M$-equivalent cylinders are not adjacent on $(Z, \zeta)$. Since these conclusions hold for all components of $(Y, \eta)$ it follows that once $\V$ is collapsed that no $\M$-equivalent cylinders remain adjacent.
 \end{proof}

One of the main ingredients in showing that rank two rel zero affine invariant submanifolds were branched covers of $\mathcal{H}(2)$ was to show that on horizontally periodic translation surfaces with twist space and cylinder preserving space coinciding, equivalent saddle connections had identical lengths. This was established in Lemma~\ref{L:bcch2}. We will now establish the same result in the present more general setting.

\begin{sslem}[Saddle Connection Dilation Lemma]\label{SCDL}
Let $S$ be an equivalence class of horizontal saddle connections on $(X, \omega)$, then every element of $S$ has the same length and $S_t \cdot (X, \omega) \in \M$ for all $t$.
\end{sslem}
\begin{proof}
Let $\Cyl_0$ and $\Cyl_1$ be two distinct adjacent $\M$-equivalence classes of horizontal cylinders on $(X, \omega)$. Let $S$ be the equivalence class of horizontal saddle connections connecting them. Let $s \in S$ have length $\ell$ and be the longest element of $S$. By Lemma~\ref{good} suppose without loss of generality that any saddle connection connecting two $\M$-equivalent cylinders has length strictly smaller than $\ell$. Suppose that $s$ lies on the boundary of $C_0 \in \Cyl_0$ and $C_1 \in \Cyl_1$. By the standard position lemma suppose without loss of generality that $C_0$ and $C_1$ are in standard position and let $V$ be the resulting cylinder. Let $\V$ be the $\M$-equivalence class of vertical cylinders that contains $V$. Since every cylinder in $\V$ has width $\ell$ it follows that no cylinder in $\V$ passes through a horizontal saddle connection that connects two $\M$-equivalent cylinders. 

\begin{sublem}
$\V$ contains every element of $S$.
\end{sublem}
\begin{proof}
Suppose to a contradiction that $C$ is a cylinder in $\Cyl_0$ and $D$ is an adjacent cylinder in $\Cyl_1$ so that $\V$ does not contain the saddle connection connecting them. Collapse $\V$ and let $(Y, \eta)$ be the resulting boundary translation surface. By the vertical collapse lemma $(Y, \eta)$ is a disjoint union of connected translation surfaces in hyperelliptic components of strata. Let $(Y', \eta')$ be the component of $(Y, \eta)$ containing $C$ and $D$. By assumption, $(Y', \eta')$ contains a cylinder belonging to $\cC_0$ and $\cC_1$. Since collapsing $\V$ only alters saddle connections connecting $\cC_0$ to $\cC_1$ it follows from the cylinder proportion theorem that every equivalence class of horizontal cylinders persists on $(Y', \eta')$. By the twist space degeneration lemma (Lemma~\ref{L:tsdl} part 4) $(Y', \eta')$ cannot contain the image of two adjacent $\M$-equivalent cylinders and $\dim_\C \M = 2r$. By Theorem~\ref{T:Flat-Geometry1} (specifically, by the result shown in Lemma~\ref{adjacency}), $(Y', \eta')$ must contain the image of two adjacent $\M$-equivalent cylinders, which is a contradiction.


 \end{proof}

By hypothesis, every cylinder in $\V$ has the same height. Since $s$ was chosen to be the longest saddle connection in $S$ and since its length is the height $V$, it follows that all saddle connections in $S$ have identical lengths. Since every element of $S$ is contained in $\V$ and since $\V$ only passes through saddle connections in $S$, the cylinder deformation theorem implies that dilating $\V$ horizontally by $t$ for arbitrary $t$ remains in $\M$. Now undoing the shears that put $C_0$ and $C_1$ in standard position remains in $\M$ by the cylinder deformation theorem and is the translation surface $S_t \cdot (X, \omega)$ as desired.
 \end{proof}

To summarize our progress, we have shown that equivalent horizontal cylinders on $(X, \omega)$ have identical heights and that equivalent saddle connections have identical lengths. It remains for us to study the order in which saddle connections appear on the boundary of the horizontal cylinders (and ultimately to use that to show that equivalent cylinders are isogenous). 

First we will need a combinatorial lemma. Before stating it consider the following - the saddle connections on the boundary of a horizontal cylinder are cyclically ordered - suppose that they are labelled $\{1, \hdots, n \}$ from left to right. Each saddle connection belongs to some equivalence class of saddle connections and so we imagine it being colored by the equivalence class that it belongs to. Given two saddle connections $i$ and $j$ we will let $(i,j)$ be the collection of saddle connections running left to right from $i$ to $j$ excluding saddle connections $i$ and $j$. Let $C_{(i, j)}$ be the colors (i.e. equivalence classes) of saddle connections in $(i,j)$ recorded with multiplicity.

\begin{sslem}\label{balls}
Suppose that there is a horizontal cylinder $C$ on $(X, \omega)$ with the following property: if $i \ne j$ and $j \ne k$ are saddle connections on the boundary of $C$ so that
\begin{enumerate}
\item All three saddle connections belong to an equivalence class $c$
\item Neither $C_{(i,j)}$ nor $C_{(j, k)}$ contains the equivalence class $c$
\end{enumerate}
then $C_{(i, j)} = C_{(j, k)}$ as multi-sets. Then the cyclic order of equivalence classes on the boundary of $C$ is periodic, i.e. after renaming the equivalence classes $\{1, \hdots, m \}$  appearing on the boundary of $C$, the equivalence classes of saddle connections on the boundary of $C$ appear in the cyclic order $(1, 2, \hdots, m, 1, 2, \hdots, m, \hdots)$.
\end{sslem}
\begin{proof}
Let $\{1, \hdots, m\}$ be the colors of saddle connections appearing on the boundary of $C$. If every color appears exactly once on the boundary of $C$ then the result holds trivially. Suppose that this is not the case. Let $i$ and $j$ be any two distinct saddle connections of the same color (without loss of generality color $1$) such that $(i,j)$ has the fewest possible elements. Construct an order set of saddle connections of color $1$ as follows. Let $s_1 = i$. Given $s_a$ let $s_{a+1}$ be the first saddle connection to the right of $s_a$ that has color $1$. Stop the process when $s_k = s_1$. It follows that every saddle connection either has color $1$ or belongs to $(s_a, s_{a+1})$ for some $1 \leq a \leq k-1$. Since by assumption $C_{(s_a, s_{a+1})}$ is the same regardless of $a$, it follows that every color $\{2, \hdots, m\}$ appears in $(s_a, s_{a+1})$ for each $a$. Since $(i,j)$ has the fewest number of elements conditional on $i$ and $j$ having the same color, it follows that every color must appear exactly once in $C_{(i,j)}$.

If $(s_1, s_2)$ is empty, then $m = 1$ and the result trivially holds. Therefore, suppose that $(s_1, s_2)$ is nonempty. Suppose after renaming the colors, that the order of colors in $[s_1, s_3]$ is $(1, \hdots, m, 1, c_2, \hdots, c_m, 1)$ where $(c_2, \hdots, c_m)$ is a permutation of $(2, \hdots, m)$. By assumption $(s_1,s_2)$ has the fewest possible elements given that $s_1$ and $s_2$ have the same color. Therefore, $c_m = m$. This in turn implies that $c_{m-1} = m-1$ and so we have that the order of the colors on $[s_1, s_3)$ is $(1, \hdots, m, 1, \hdots, m)$. Iterating this argument gives that the order of the color of the balls is periodic with period $m$.
 \end{proof}

We now use this lemma to show that the cyclic order of equivalence classes of saddle connections appearing on the boundary of a horizontal cylinder $C$ in $(X, \omega)$ is indeed periodic. 

\begin{sslem}[Periodic Ordering Lemma 1]\label{periodic-order}
Let $C$ be a horizontal cylinder on $(X, \omega)$ and suppose that the equivalence classes of saddle connections on the boundary are labelled $\{1, \hdots, m\}$. The cyclic ordering of saddle connections on the boundary of $C$ is (perhaps after relabelling) $(1, 2, \hdots, m, 1, 2, \hdots, m, \hdots)$. 
\end{sslem}
\begin{proof}
By Lemma~\ref{balls} it suffices to show the following: suppose that $i, j, k$ are saddle connections on the top boundary of $C$ in equivalence class $c$. Suppose furthermore that (in the notation of Lemma~\ref{balls}) $(i, j)$ and $(j, k)$ do not contain saddle connections in equivalence class $c$. It suffices to show that $C_{(i, j)}$ and $C_{(k, \ell)}$ coincide as multi-sets.

Let $a$ be any transcendental number. By the saddle connection dilation theorem we may suppose without loss of generality that each saddle connection in equivalence class $k$ has length $a^k$. By using the cylinder deformation theorem we may shear $C$ so that the saddle connection $j$ lies above its image $J(j)$ under the hyperelliptic involution. Suppose moreover that $j$ is contained in a vertical cylinder $V$ that passes through it exactly once and only passes through saddle connections equivalent to $j$. Such a vertical cylinder exists by the standard position lemma in the case that $j$ lies on the boundary of two inequivalent cylinders and by Theorem~\ref{T:Flat-Geometry1} otherwise. 

Let $\V$ be the $\M$-equivalence class of cylinders containing $V$. By the saddle connection dilation lemma, every saddle connection equivalent to $j$ is contained in a cylinder in $\V$ that intersects it exactly once. Moreover, all the cylinders in $\V$ have identical heights. Since the saddle connections of color $c$ appear in the order $(i, j, k)$ on the top boundary of $C$, their images under the hyperelliptic involution appear in the order $(J(k), J(j), J(i))$ on the bottom boundary, where $J$ is the hyperelliptic involution. Since all saddle connections of color $c$ are contained in cylinders in $\V$, it follows that $i$ lies directly vertically above $J(k)$. In particular, this means that the sum of the lengths of the saddle connections in $(i, j)$ coincides with the sum of the length of the saddle connections in $(J(k), J(i))$, which is equal to the sum of the lengths of the saddle connections in $(j, k)$. Since we have assumed that $\M$-equivalence class $k$ of horizontal saddle connections all have length $a^k$ where $a$ is transcendental, it follows that $C_{(i,j)} = C_{(j, k)}$. 
 \end{proof}
 
For the proof of the following lemma it will be useful to introduce the following definition. We will say that an equivalence class of cylinders $\cC$ subsumes an equivalence class of saddle connections $\cS$ on the boundary of two equivalence classes $\cC_1$ and $\cC_2$ if every saddle connection in $\cS$ is contained in a cylinder in $\cC$ that intersects it exactly once and if each cylinder in $\cC$ is contained in collection of cylinders in $\cC_1$ and $\cC_2$. In the case that $\cC_1$ and $\cC_2$ are distinct, the standard position lemma implies that it is always possible to find an equivalence class of cylinders that subsumes $\cS$. Otherwise, Theorem~\ref{T:Flat-Geometry1} implies that this is possible. 

\begin{sslem}[Periodic Ordering Lemma 2]
If $C$ and $D$ are equivalent horizontal cylinders on $(X, \omega)$, then the boundaries of $C$ and $D$ contain the same $\M$-equivalence classes of horizontal saddle connections in the same cyclic order.
\end{sslem}
\begin{proof}
Suppose that $C$ belongs to the equivalence class $\cC_0$. We begin by showing that the boundaries of $C$ and $D$ contain the same equivalence classes of saddle connections. If not, then the saddle connection dilation lemma implies that it is possible to alter the length of one cylinder in $\{C, D \}$ while fixing the other. However, by Wright~\cite{Wcyl} Lemma 4.7, the ratio of the lengths of core curves of equivalent cylinders is constant. This yields a contradiction and therefore, the same equivalence classes of cylinders appear on the boundaries of $C$ and $D$.

Let $\{1, \hdots, m \}$ be the equivalence classes of cylinders appearing on the boundary of $C$ and suppose that they occur in the cyclic order $(1, 2, \hdots, m, 1, 2, \hdots, m, \hdots)$. By the saddle connection dilation lemma we may suppose that each equivalence class of saddle connection appearing on the boundary of $C$ has length $1$. By the cylinder deformation theorem we may suppose that both $C$ and $D$ have height $1$. We will suppose furthermore that if $\cC_0$ is a self-adjacent equivalence class of cylinders, then the equivalence class of saddle connections joining $\cC_0$ to itself is labelled $1$. Let $\T_1$ be a vertical equivalence class of cylinders that subsumes all saddle connection in equivalence class $1$. For every other equivalence class $k$ of saddle connection on the boundary of $C$, there is a saddle connection that connects $C$ to an inequivalent cylinder $C_k$ in equivalence class $\cC_k$. By the standard position lemma, it is possible to shear $\cC_k$ to put $\cC_0$ and $\cC_k$ in transverse standard position. Let $\T_k$ be the resulting collection of cylinders, which pass through $\cC_0$ and $\cC_k$ and so every saddle connection in equivalence class $k$ appears in a cylinder in $\T_k$ and is intersected exactly once by that cylinder. The slope of the core curve of cylinders in $\mathcal{T}_k$ is $\frac{1}{2k}$. By the cylinder proportion theorem $D$ contains a cylinder in $\mathcal{T}_k$ for each $k$ and that $D$ is contained in the union of cylinders in some $\mathcal{T}_k$. The decreasing slopes force the periodic ordering on the boundary of $D$ agrees with the periodic ordering on the boundary of $C$. 
 \end{proof}

\begin{sslem}[Isogeny Lemma]\label{isogeny}
All $\M$-equivalent horizontal cylinders on $(X, \omega)$ are isogeneous. 
\end{sslem}
\begin{proof}
Let $\Cyl$ be an $\M$-equivalence class of horizontal cylinders. Let $C \in \Cyl$ be a cylinder and let $D$ be an adjacent $\M$-inequivalent cylinder. Put $C$ and $D$ into standard position by using the cylinder deformation theorem (suppose that $\Cyl$ is sheared by $\begin{pmatrix} 1 & t \\ 0 & 1 \end{pmatrix}$) - and let $V$ be the resulting vertical cylinder. Let $\V$ be the $\M$-equivalence class of cylinders containing $V$. Label the equivalence class containing the edge connecting $C$ to $D$ by $1$. For any cylinder $v \in \Cyl$ let $\{1, \hdots, m\}$ be the edge equivalence classes attached to $v$ and suppose that they are ordered by $(1, \hdots, m, \hdots)$ around $v$. By assumption every cylinder in $\Cyl$ has height $h$. By the saddle connection dilation lemma every saddle connection in equivalence class $i$ has length $\ell_i$ and $\V$ contains every saddle connection in edge equivalence class $1$. Therefore all cylinders in $\Cyl$ are isogenous to $\begin{pmatrix} 1 & -t \\ 0 & 1 \end{pmatrix}$ applied to the cylinder depicted in Figure~\ref{fig:Isogeny}
\begin{figure}[H]
\centering
\begin{tikzpicture}[scale=.75]
	\draw (0,0) -- (4,0) -- (4,1) -- (0,1) -- (0,0);
	\draw[dotted] (1,0) -- (1,1);
 	\draw[dotted] (2,0) -- (2,1);
	\draw[dotted] (3,0) -- (3,1);
	\node at (.5, -.5) {$1$}; \node at (.5,1.5) {$1$};
	\node at (1.5, -.5) {$m$}; \node at (1.5,1.5) {$2$};
	\node at (2.5, -.5) {$\hdots$}; \node at (2.5, 1.5) {$\hdots$};
	\node at (3.5, -.5) {$2$}; \node at (3.5, 1.5) {$m$};
	\node at (-.5, .5) {$h$};
\end{tikzpicture}
\caption{The cylinder to which all cylinders in $\Cyl$ are isogenous}
\label{fig:Isogeny}
\end{figure}
where labels describe how the top boundary glues to the bottom boundary and where a saddle connection labelled $k$ has length $\ell_k$. 
 \end{proof}

%
%

\section{Higher Rank Affine Invariant Submanifolds are Branched Covering Constructions}\label{S:bcc-criterion}


The goal of this section is to show that all higher rank affine invariant submanifolds in hyperelliptic components of strata are branched covering constructions. We begin by proving an analogous statement at the level of trees. Suppose that $\Gamma$ and $\Gamma'$ are graphs. Define a degree $d$ branched covering of finite graphs to be a simplicial map between graphs $\pi: \Gamma \ra \Gamma'$ such that
\begin{enumerate}
\item $|E_\Gamma| = d \cdot |E_{\Gamma'}|$ where $E_\Gamma$ (resp. $E_{\Gamma'}$) is the edge set of $\Gamma$ (resp. $\Gamma'$).
\item For each vertex $v$ in $\Gamma$ the ramification index $e_v := \frac{ \deg (v) }{\deg \pi(v) }$ is an integer.
\item For each vertex $w$ in $\Gamma'$, $\ds{ \sum_{\pi(v) = w} e_v = d }$.
\end{enumerate}

\begin{sslem}\label{L:graphbcc}
If $f: \Gamma \ra \Gamma'$ is branched covering of finite graphs where $\Gamma$ is a disjoint union of trees and $\Gamma'$ is connected, then $\Gamma'$ is a tree as well. 
\end{sslem}
\begin{proof}
Let $\chi_\Gamma$ denote the Euler characteristic of the graph $\Gamma$. We will first show a Riemann-Hurwitz type formula for branched covers of graphs.
\[ \chi_\Gamma = |V_\Gamma| - |E_\Gamma| = d \cdot |V_{\Gamma'}| - \sum_{v \in V_\Gamma} \left( e_v - 1 \right) - d|E_{\Gamma'}| = d \cdot \chi_{\Gamma'} - \sum_{v \in B} \left( e_v - 1 \right) \]
By assumption $\chi_\Gamma$ is positive. The Riemann-Hurwitz type formula therefore implies that $\chi_{\Gamma'}$ is positive too and therefore must be equal to $1$ since $\Gamma'$ is connected. The connected graphs of Euler characteristic one are exactly trees. 
\end{proof}

\begin{ssthm}
Suppose that $\M$ is a rank $r > 1$ affine invariant submanifold in a hyperelliptic component of a stratum of abelian differentials, then $\M$ is a branched covering construction of $\hyp(2r-2)$ if it is even complex-dimensional and of $\hyp(r-1, r-1)$ if it is odd complex dimensional. The branched covers are branched over zeros of the one-forms and commute with the hyperelliptic involution. 
\end{ssthm}
\begin{proof}
We will proceed by induction on the dimension of $\M$. The base case has been established in Section~\ref{S:dim4}. By Corollary~\ref{useful} it suffices to produce an $\M$-generic horizontally periodic translation surface $(X, \omega)$ with $\Twist \M = \Pres \M$ and so that $(X, \omega)$ is a simple translation covering of a generic surface $(Y, \eta)$ in $\hyp(2r-2)$ or $\hyp(r-1,r-1)$. Enumerate the $\M$-equivalence classes of horizontal cylinders $\{1, \hdots, m\}$ and the equivalence classes of edges $\{1, \hdots, n\}$. Choose two transcendental numbers $a$ and $b$ so that $\Q(a, b)$ is isomorphic as a field to $\Q(x, y)$ where $x$ and $y$ are indeterminates. Using the cylinder deformation theorem and the saddle connection dilation lemma, assume without loss of generality that the height of the cylinders in equivalence class $k$ is $a^k$ and the lengths of saddle connections in equivalence class $k$ is $b^k$.

We will first build the Lindsey tree $\Gamma_Y$ of $(Y, \eta)$. For each $\M$-equivalence class in $(X, \omega)$ add a corresponding node in $(Y, \eta)$. If two distinct equivalence classes are adjacent in $(X, \omega)$, then add an edge connecting the corresponding nodes in $\Gamma_Y$. If an equivalence class is self-adjacent in $(X, \omega)$ then add a half-edge to the corresponding node in $\Gamma_Y$. The cyclic order around each node is specified by the periodic ordering lemmas. This completely specifies $\Gamma_Y$. 

We will now build $(Y, \eta)$ and the simple translation covering $f: (X, \omega) \ra (Y, \eta)$. To each $\M$-equivalence class $\Cyl_i$ of horizontal cylinders let $C_i$ be the shortest cylinder isogenous to the cylinders in $\Cyl_i$ (this will be the one constructed in Lemma~\ref{isogeny}). For each $c \in \Cyl_i$ let $f_c: c \ra C_i$ be the local isometry constructed in Lemma~\ref{isogeny}. Gluing the cylinders $C_i$ together according to $\Gamma_Y$ now constructs a translation surface $(Y, \eta)$. Let $f$ be the map that sends a point $x \in c$ to $f_c(x)$. This map defines a holomorphic degree $d$ covering map $f: X - Z(\omega) \ra Y - Z(\eta)$ so that $f$ pulls $\eta$ back to $\omega$. By the Riemann extension theorem $f$ extends to a holomorphic map $f: X \ra Y$ such that $f^* \eta = \omega$. 

Let $\Gamma'$ be $\Gamma_Y$ with the half-edge deleted (should there be one). Let $\Gamma$ be the Lindsey tree $\Gamma_X$ of $(X, \omega)$ with edges connecting equivalent cylinders deleted. Since $f$ is a holomorphic map between $X - Z(\omega)$ and $Y - Z(\eta)$ it induces a degree $d$ branched covering of graphs between $\Gamma$ and $\Gamma'$.  Lemma~\ref{L:graphbcc} implies that $\Gamma'$ is a tree and hence that $\Gamma_Y$ is a half-tree. 

By Corollary~\ref{LTC}, $(Y, \eta)$ is a translation surface in a hyperelliptic component of a stratum. Moreover, the translation covering $f: (X, \omega) \ra (Y, \eta)$ is a simple translation covering. Since $\Gamma_Y$ is a half-tree on $\mathrm{rk}(\M) + \mathrm{rel}(\M)$ vertices with an extra half-edge if and only if $\mathrm{rel}(\M) = 0$, it follows that $(Y, \eta)$ belongs to $\hyp(2r-2)$ when $\rel(\M) = 0$ and to $\hyp(r-1,r-1)$ when $\rel(\M) = 1$. Moreover the map $f$ is branched over zeros and commutes with the hyperelliptic involution by construction. 

It remains to show that $(X, \omega)$ is generic. Let $\N$ be the orbit closure of $(X, \omega)$. Notice that by construction two horizontal cylinders on $(X, \omega)$ have a ratio of lengths of core curves that is algebraic if and only if they are $\M$-equivalent. By Wright~\cite{Wcyl} Lemma 4.7, if two cylinders are $\N$-equivalent then they have an algebraic ratio of lengths of their core curves. Therefore, there are $\mathrm{rank}(\M) + \mathrm{rel}(\M)$ $\N$-equivalence classes of horizontal cylinders on $(X, \omega)$. Since $\N$ is contained in $\M$ its rank is bounded above by the rank of $\M$ and rel is bounded above by the rel of $\M$. Moreover, the dimension of the twist space of $(X, \omega)$ in $\N$ is bounded above by $\mathrm{rank}(\N) + \mathrm{rel}(\N)$.Combining these inequalities yields
\[ \mathrm{rank}(\N) + \mathrm{rel}(\N) \leq \mathrm{rank}(\M) + \mathrm{rel}(\M) \leq \mathrm{rank}(\N) + \mathrm{rel}(\N) \]
\noindent Therefore $\mathrm{rank}(\N) = \mathrm{rank}(\M)$ and $\mathrm{rel}(\N) = \mathrm{rel}(\M)$ and hence $\M$ and $\N$ coincide as affine invariant submanifolds. The exact same reasoning implies that $(Y, \eta)$ is generic in the component of the stratum of abelian differentials to which it belongs.
 \end{proof}

\bibliography{mybib}{}

\providecommand{\bysame}{\leavevmode\hbox to3em{\hrulefill}\thinspace}
\providecommand{\MR}{\relax\ifhmode\unskip\space\fi MR }
\providecommand{\MRhref}[2]{%
  \href{http://www.ams.org/mathscinet-getitem?mr=#1}{#2}
}
\providecommand{\href}[2]{#2}
\begin{thebibliography}{MMW16}

\bibitem[AEM]{AEM}
Artur Avila, Alex Eskin, and Martin M\"oller, \emph{Symplectic and {I}sometric
  {$SL(2, \mathbb{R})$}-invariant subbundles of the {H}odge bundle}, preprint,
  arXiv 1209.2854 (2012).

\bibitem[AN16]{AN}
David Aulicino and Duc-Manh Nguyen, \emph{Rank two affine submanifolds in
  {$\mathcal{H}(2,2)$} and {$\mathcal{H}(3,1)$}}, Geom. Topol. \textbf{20}
  (2016), no.~5, 2837--2904. \MR{3556350}

\bibitem[Api]{Apisa-mp1}
Paul Apisa, \emph{{GL(2,$\mathbb{R}$)}-invariant measures in marked strata:
  Generic marked points, {E}arle-{K}ra for strata, and illumination}, preprint,
  arXiv 1601.07894 (2016).

\bibitem[Bai07]{Ba}
Matt Bainbridge, \emph{Euler characteristics of {T}eichm\"uller curves in genus
  two}, Geom. Topol. \textbf{11} (2007), 1887--2073.

\bibitem[Cal04]{Ca}
Kariane Calta, \emph{Veech surfaces and complete periodicity in genus two}, J.
  Amer. Math. Soc. \textbf{17} (2004), no.~4, 871--908.

\bibitem[EFW17]{EFW}
Alex Eskin, Simion Filip, and Alex Wright, \emph{The algebraic hull of the
  {K}ontsevich-{Z}orich cocycle}, 2017, arXiv:1702.02074.

\bibitem[EM]{EM}
Alex Eskin and Maryam Mirzakhani, \emph{Invariant and stationary measures for
  the {$SL(2,\mathbb{R})$} action on moduli space}, preprint, arXiv 1302.3320
  (2013).

\bibitem[EMM15]{EMM}
Alex Eskin, Maryam Mirzakhani, and Amir Mohammadi, \emph{Isolation,
  equidistribution, and orbit closures for the {${\rm SL}(2,\Bbb R)$} action on
  moduli space}, Ann. of Math. (2) \textbf{182} (2015), no.~2, 673--721.
  \MR{3418528}

\bibitem[Fil16a]{Fi2}
Simion Filip, \emph{Semisimplicity and rigidity of the {K}ontsevich-{Z}orich
  cocycle}, Invent. Math. \textbf{205} (2016), no.~3, 617--670. \MR{3539923}

\bibitem[Fil16b]{Fi1}
\bysame, \emph{Splitting mixed {H}odge structures over affine invariant
  manifolds}, Ann. of Math. (2) \textbf{183} (2016), no.~2, 681--713.
  \MR{3450485}

\bibitem[KZ03]{KZ}
Maxim Kontsevich and Anton Zorich, \emph{Connected components of the moduli
  spaces of {A}belian differentials with prescribed singularities}, Invent.
  Math. \textbf{153} (2003), no.~3, 631--678. \MR{2000471 (2005b:32030)}

\bibitem[Lin15]{Lindsey}
Kathryn~A. Lindsey, \emph{Counting invariant components of hyperelliptic
  translation surfaces}, Israel J. Math. \textbf{210} (2015), no.~1, 125--146.
  \MR{3430271}

\bibitem[LNW]{LNW}
Erwan Lanneau, Duc-Manh Nguyen, and Alex Wright, \emph{Finiteness of
  {T}eichm\"uller curves in non-arithmetic rank 1 orbit closures}, preprint,
  arXiv:1504.03742 (2015).

\bibitem[Mas85]{Maskit}
Bernard Maskit, \emph{Comparison of hyperbolic and extremal lengths}, Ann.
  Acad. Sci. Fenn. Ser. A I Math. \textbf{10} (1985), 381--386. \MR{802500
  (87c:30062)}

\bibitem[McM03]{Mc}
Curtis~T. McMullen, \emph{Billiards and {T}eichm\"uller curves on {H}ilbert
  modular surfaces}, J. Amer. Math. Soc. \textbf{16} (2003), no.~4, 857--885
  (electronic).

\bibitem[McM05]{McM:spin}
\bysame, \emph{Teichm\"uller curves in genus two: discriminant and spin}, Math.
  Ann. \textbf{333} (2005), no.~1, 87--130.

\bibitem[McM06]{Mc4}
\bysame, \emph{Teichm\"uller curves in genus two: torsion divisors and ratios
  of sines}, Invent. Math. \textbf{165} (2006), no.~3, 651--672.

\bibitem[McM07]{Mc5}
\bysame, \emph{Dynamics of {${\rm SL}_2(\Bbb R)$} over moduli space in genus
  two}, Ann. of Math. (2) \textbf{165} (2007), no.~2, 397--456.

\bibitem[McM13a]{McM:braid}
\bysame, \emph{Braid groups and {H}odge theory}, Math. Ann. \textbf{355}
  (2013), no.~3, 893--946. \MR{3020148}

\bibitem[McM13b]{McNav}
\bysame, \emph{Navigating moduli space with complex twists}, J. Eur. Math. Soc.
  (JEMS) \textbf{15} (2013), no.~4, 1223--1243. \MR{3055760}

\bibitem[MMW16]{MMW}
Curtis~T. McMullen, Ronen~E. Mukamel, and Alex Wright, \emph{{Cubic curves and
  totally geodesic subvarieties of moduli space}}, preprint, 2016.

\bibitem[M{\"o}l06]{M2}
Martin M{\"o}ller, \emph{Periodic points on {V}eech surfaces and the
  {M}ordell-{W}eil group over a {T}eichm\"uller curve}, Invent. Math.
  \textbf{165} (2006), no.~3, 633--649.

\bibitem[M{\"o}l08]{M3}
\bysame, \emph{Finiteness results for {T}eichm\"uller curves}, Ann. Inst.
  Fourier (Grenoble) \textbf{58} (2008), no.~1, 63--83.

\bibitem[Muk14]{Muk14}
Ronen~E. Mukamel, \emph{Orbifold points on {T}eichm\"uller curves and
  {J}acobians with complex multiplication}, Geom. Topol. \textbf{18} (2014),
  no.~2, 779--829. \MR{3180485}

\bibitem[MW15]{MW}
Carlos Matheus and Alex Wright, \emph{Hodge-{T}eichm\"uller planes and
  finiteness results for {T}eichm\"uller curves}, Duke Math. J. \textbf{164}
  (2015), no.~6, 1041--1077. \MR{3336840}

\bibitem[MW17]{MirWri}
Maryam Mirzakhani and Alex Wright, \emph{The boundary of an affine invariant
  submanifold}, Invent. Math. \textbf{209} (2017), no.~3, 927--984.
  \MR{3681397}

\bibitem[NW14]{NW}
Duc-Manh Nguyen and Alex Wright, \emph{Non-{V}eech surfaces in
  {$\mathcal{H}^{\rm hyp}(4)$} are generic}, Geom. Funct. Anal. \textbf{24}
  (2014), no.~4, 1316--1335. \MR{3248487}

\bibitem[SW04]{SW2}
John Smillie and Barak Weiss, \emph{Minimal sets for flows on moduli space},
  Israel J. Math. \textbf{142} (2004), 249--260.

\bibitem[Wri15a]{Wcyl}
Alex Wright, \emph{Cylinder deformations in orbit closures of translation
  surfaces}, Geom. Topol. \textbf{19} (2015), no.~1, 413--438. \MR{3318755}

\bibitem[Wri15b]{Wsurvey}
\bysame, \emph{Translation surfaces and their orbit closures: {A}n introduction
  for a broad audience}, EMS Surv. Math. Sci. \textbf{2} (2015), no.~1,
  63--108. \MR{3354955}

\bibitem[Zor06]{Z}
Anton Zorich, \emph{Flat surfaces}, Frontiers in number theory, physics, and
  geometry. {I}, Springer, Berlin, 2006, pp.~437--583.

\end{thebibliography}
\bibliographystyle{amsalpha}
\end{document}